\newcommand{\N}{{\mathbb N}}
\newcommand{\IP}{{\mathbb P}}
\newcommand{\IE}{{\mathbb E}}
\newcommand\Z{\mathbb{Z}}
\newcommand{\cvlaw}{\stackrel{{ (d)}}{\longrightarrow}}
\newcommand*\cvLdeux{\overset{L^2}{\longrightarrow}}
\newcommand{\indep}{\perp \!\!\! \perp}
\def\E{{\mathbb E}}
\definecolor{atomictangerine}{rgb}{1.0, 0.6, 0.4}
\newtheorem{theorem}{Theorem}[section]
\newtheorem{lemma}[theorem]{Lemma}
\newtheorem*{theorem*}{Theorem}
\newtheorem*{lemma*}{Lemma}
\newtheorem*{proposition*}{Proposition}
\newtheorem{remark}[theorem]{Remark}
\theoremstyle{definition}
\newtheorem*{remark*}{Remark}
\newtheorem*{definition*}{Definition}
\theoremstyle{remark}
\begin{document}

\subjclass[2020]{Primary: 82D60; Secondary: 82B44, 60F05.}
\keywords{Directed Polymer in Random Environment, Critical space-correlation, Bessel function, Central Limit Theorem, Disordered Systems.}

  \title[CLT for 2D directed polymers with critical spatial correlation]{A central limit theorem for two-dimensional directed polymers with critical spatial correlation}
\author{Cl\'{e}ment Cosco, Francesca Cottini and Anna Donadini}
\address{Cl\'{e}ment Cosco, Ceremade, Université Paris Dauphine, Place du Maréchal de Lattre de Tassigny, 75775 Paris Cedex 16, France}
\email{cosco@ceremade.dauphine.fr}
\address{Francesca Cottini, LPSM, Sorbonne Université, 4 place Jussieu, 75005 Paris, France}
\email{francesca.cottini@sorbonne-universite.fr}
\address{Anna Donadini, Dipartimento di Matematica e Applicazioni, Università degli Studi di Milano-Bicocca, via Cozzi 55, 20125 Milano, Italy}
\email{a.donadini@campus.unimib.it}

\begin{abstract}
On the 1+2 dimensional lattice, we consider a directed polymer in a random Gaussian environment that is independent in time and correlated in space. The spatial correlation is supposed to decay as $(\log |x|)^a /|x|^{2}$, $a>-1$, where the square in the polynomial is known to be critical (\emph{Lacoin, Ann.\@ Prob.\@ (2011)}).  
    We introduce an intermediate regime of temperature $\beta_N \propto \hat \beta/(\log N)^{\frac{a+2}{2}}$, under which the log-partition function $\log W_N^{\beta_N}$ converges in distribution towards a Gaussian random variable if $\hat \beta\in (0,\hat \beta_c)$, whereas $W_N^{\beta_N}$ vanishes for $\hat \beta\geq \hat \beta_c$.
    The variance of the limiting Gaussian distribution, which
    is given by an inverse Bessel function, is determined by an induction scheme whose multi-scale dependence  reflects the critical nature of the correlation. 
    The Gaussianity of the limit follows from a decoupling argument of \emph{Cosco, Donadini 
    (2024+)}.
\end{abstract}
\maketitle

\section{Introduction}
Directed polymers in a random environment describe the behavior of a long, directed chain of monomers in the presence of random impurities. The model was originally introduced in \cite{huse} and received its first mathematical treatment in \cite{imbrie}. 

In its discrete setup, the polymer trajectory is a nearest-neighbor path $(S_n)_{n\in \mathbb N}$ on the $d$-dimensional lattice, while impurities (also called the environment) are given by random variables  $\omega(n,x)$ on $\mathbb{N}\times \mathbb{Z}^d$,
with $(\IP,\IE)$ denoting the associated probability measure and the expectation sign. 

Let $(P_x,E_x)$ be a probability measure and expectation under which $(S_n)_{n\in\mathbb{N}}$ is the simple random walk started at $x\in \mathbb Z^d$. (We write $(P,E)$ when $x=0$.) Then, the \emph{polymer measure} is defined in the Gibbsian sense by
\begin{equation*}
    dP_{N,\beta}^{\omega}(S)=\frac{e^{\beta\sum_{n=1}^N \omega(n,S_n)}}{Z_N^\beta}dP(S),
\end{equation*}
where $\beta>0$ is the inverse temperature, or disorder strength, and $Z_N^\beta$ is the normalizing constant called \emph{partition function}.

The classical setting, where $\omega(n,x)$ are independent in both time and space and admit exponential moments, has been extensively studied. We refer the reader to \cite{comets_book,Z24} for general introductions and to \cite{JL24} for significant recent developments.

In this article, we consider dimension $d=2$ with \emph{critical spatial correlation}, which corresponds to taking weights $\omega(n,x)$ that are jointly Gaussian, with zero mean and covariance
\begin{equation} \label{eq:correlation_def}
\IE \big[\omega(n,y)\omega(m,z)\big]=\mathds{1}_{n=m} \, h(y-z), \quad  \forall \, n,m\in \mathbb N, \ x,y\in \mathbb Z^2,
\end{equation}
where $h:\mathbb R^2\to \mathbb R_+$ is a \emph{bounded} function such that $h(0)=1$ and
\begin{equation}\label{eq:correlation_function}
    h(x)\sim \frac{(\log |x|)^a}{|x|^2} \text{ as } |x|\to\infty, \quad a>-1.
\end{equation}
We further assume that $h=h_0\star h_0$ for some non-negative function $h_0$ on $\mathbb Z^2$, where $\star$ denotes the convolution on $\mathbb Z^2$.

\begin{remark}
The polynomial decay of correlation $|x|^{-2}$ in \eqref{eq:correlation_function} is known to be critical in every dimension $d\geq 2$. Indeed,
Lacoin \cite{Lacoin-cor} showed that for spatial correlation $|x|^{-\alpha}$
with $\alpha > 2$, the model behaves similarly to the case of independent variables. In fact, the same conclusion holds as soon as $h\in L^1$, which includes the case where $a<-1$ in \eqref{eq:correlation_function}. In contrast, when $\alpha \in (0,2)$, the presence of correlation strongly impacts the model's properties and  superdiffusivity of the polymer path holds for any $\beta>0$. 
\end{remark}

In the above setting, the \emph{normalized partition function}
\begin{equation} \label{eq:defW}
W_N^\beta=W_N:=\frac{Z_N^\beta}{\IE[Z_N^\beta]} = E\left[e^{ \sum_{n=1}^N \{\beta \omega(n,S_n)-\frac{\beta^2}{2}\}}\right], \quad \beta >0\,,
\end{equation}
 can be shown to vanish as $N\to\infty$ for all fixed $\beta>0$. This phenomenon can be interpreted as a feature of \emph{strong disorder}, which in general implies localization properties for all $\beta>0$, see \cite{comets_book} for more details.

We consider an \emph{intermediate disorder} regime (see also \cite{AKQprl10,caravenna1} for $d=1,2$ with space-time independent noise and \cite{Rang20,ChGa23} for $d=1$ with spatially correlated environment), where $\beta=\beta_N$ vanishes as
\[
\beta_N \propto \frac{\hat \beta}{ (\log N)^{\frac{a+2}{2}}}, \quad \hat \beta > 0\,, \quad N\to\infty\,.
\] 
We show that in this regime, a phase transition occurs at some (explicit) parameter $\hat \beta_c > 0$, such that in the high temperature region $\hat \beta \in (0,\hat \beta_c)$, the log partition $\log W_N^{\beta_N}$
satisfies a central limit theorem,   whereas $W_N^{\beta_N}$ vanishes when $\hat \beta \in [\hat\beta_c,\infty)$.

This type of phase transition is reminiscent of dimension $d=2$ with space-time independent noise \cite{caravenna1,caravenna3,CoscoDonadini,DunlapGu}, where the same dichotomy occurs at the scale $\beta_N \propto \hat \beta (\log N)^{-1/2}$. However, our setup differs in the way that special constants (depending on Bessel functions) appear in the limiting variance. This results from a specific multi-scale structure 
relative to the criticality of the noise correlation.
As a matter of fact, the difficulty of the study relies on estimating the second moment $\IE[(W_N^{\beta_N})^2]$.
Indeed, one striking difference compared to the independent case, is that after Taylor expansion of the partition function, 
the second moment does not reduce to a geometric series whose ratio is given by the first order term of the expansion.
It is rather obtained by an induction scheme, where each exponential space window $[N^{(k-1)/2M},N^{k/2M}]$, for $ k\leq M$, contributes by a different factor.  

As a byproduct of this moment computation, we obtain a generalization of a classical theorem of Erd\"os and Taylor \cite{ErdosTaylor}, see Theorem \ref{th:ErdosTaylor}.

Once second moment computations are in hand, the proof of Gaussianity follows the strategy of \cite{CoscoDonadini}, arguing that $\log W_N^{\beta_N} \approx \sum_{k=1}^M \log Z_{k,M}^{\beta_N}$ where $Z_{k,M}^{\beta_N}$ are (independent) partition functions restricted to the time window $\big[N^{(k-1)/M},N^{k/M}\big]$, then concluding by the usual central limit theorem. This idea adapts nicely to our context thanks to the independence in time. 
\medskip

In the recent years, there has been growing interest in the related model of the stochastic heat equation (SHE) with correlated noise.
For the SHE with \emph{critical} correlation ($\alpha = 2$) in dimension $d\geq 3$, Mueller and Tribe  \cite{MuTri04} established the existence of a measure-valued solution to the equation,
inside some high temperature regime. More recently, limit theorems (of spatial averages) of the latter solution were investigated by Kuzgun and Tao \cite{KuTao24}. We also mention the recent work by Dunlap, Hairer and Li (\cite{DHL25}) 
on the non-linear SHE in $d\geq 3$ under critical correlation. 
Regarding subcritical correlation ($\alpha <2$), singular SPDEs can be constructed (see \cite{HuNuVi20,Dalang99}) and a central limit theorem for spatial averages has been obtained in \cite{HuNuVi20}. In dimension $d=1$, the SHE with correlated noise has been proven to arise as the scaling limit of discrete polymers (intermediate regime), see \cite{ChGa23,Rang20} for correlation in space and \cite{RaJiWa24} for results on long-range correlation in time. In the supercritical case ($\alpha > 2$), Gerolla, Hairer and Li have derived Gaussian fluctuations for the SHE and the KPZ (Kardar-Parisi-Zhang) equation in \cite{GeHaLi25,GeHaLi24}. We mention the work \cite{Kotitsas} that considers local correlations in time for the $2d$ SHE. 

\medskip

Finally, we refer to Section \ref{sec:openQuestions} for a number of related problems and open questions, including the behavior at the critical point $\hat \beta_c$, that we expect should yield an analogue of the Stochastic Heat Flow introduced in \cite{CaSuZyCrit21} (see also \cite{Nak25,TsaiMoments24}) for spatially colored noise.

\subsection{Main results}
Let $a>-1$. Set $\alpha := \frac{a+1}{a+2}-1>-1$ and  $\tilde J_\alpha(z) = \Gamma(\alpha+1) (z/2)^{-\alpha} J_{\alpha}(z)$, where
\begin{equation} \label{eq:defJalpha}
    J_\alpha(z) := \sum_{n=0}^{\infty}  \frac{(-1)^n}{\Gamma\big(n+\alpha + 1\big) \, n!}\,\Big( \, \frac{z}{2}\,\Big)^{2n + \alpha}\,
\end{equation}
is the Bessel function of the first kind.
Let
\begin{equation} \label{eq:defza}
z_a := \inf \{ z > 0 : \tilde J_{\alpha}(z)=0\}>0
\end{equation}
denote the first zero of $\tilde J_{\alpha}$ and define for all $\hat \beta < z_a$,
\begin{equation}\label{eq:lambda}
    \lambda^2 := \lambda^2(\hat \beta,a) = \log {\tilde J_\alpha(\hat \beta)}^{-1}.
\end{equation}
We consider the weak disorder regime $\beta_N\to 0$ where
\begin{equation} \label{eq:beta_N}
    \beta_N :=\frac{{\mathfrak C_a}\, \hat \beta}{(\log N)^{(a+2)/2}}, \qquad \text{with } \quad {\mathfrak C_a :=2^{\frac{a-1}{2}}\,(a+2)}.
\end{equation}

Our main result is the following. Let $a>-1$ and $W_N^\beta$ defined in \eqref{eq:defW} with weights satisfying \eqref{eq:correlation_def}. (Denote by $\mathcal N(\mu,\sigma^2)$ the  normal distribution of mean $\mu$ and variance $\sigma^2$.)
\begin{theorem}\label{thm:clt}
 For $\lambda^2$ as in \eqref{eq:lambda} and for all  $\hat \beta \in [0,z_a)$, 
\begin{equation}\label{eq:clt}
     \quad \log W_N^{\beta_N}\cvlaw \mathcal N\left(-\frac{\lambda^2}{2},\lambda^2\right),\quad N\to\infty\,,
\end{equation} 
in distribution.
On the other hand, $W_N^{\beta_N}\to 0$ in probability for all $\hat \beta\geq z_a$.
\end{theorem}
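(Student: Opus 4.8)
The plan is to follow the standard paradigm for such intermediate‐disorder results: control the $L^2$ norm of $W_N^{\beta_N}$ up to the critical point, and at/above the critical point show the martingale limit is $0$. I would begin by Taylor expanding the exponential in \eqref{eq:defW}, writing $W_N^{\beta_N}$ as a polymer chaos expansion indexed by increasing time sequences $n_1<\dots<n_j$ with weights built from the random walk two‐point function $p_{2(n_{i+1}-n_i)}(\cdot)$ convolved against $h$. Squaring and using the Gaussian structure \eqref{eq:correlation_def} together with $h=h_0\star h_0$, one gets
\begin{equation*}
\IE\big[(W_N^{\beta_N})^2\big]=E^{\otimes 2}\Big[\prod_{n=1}^N\big(1+(e^{\beta_N^2 h(S_n-S_n')}-1)\big)\Big],
\end{equation*}
where $(S,S')$ are two independent walks; since $\beta_N^2 h\to 0$ uniformly, this is essentially $E^{\otimes 2}[\exp(\beta_N^2\sum_n h(S_n-S_n'))]$. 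The difference $S-S'$ is a lazy random walk on $\mathbb Z^2$, so the key object is the Green's‐function‐type sum $\sum_{n\le N}h(X_n)$ for $X$ a mean‐zero walk; the correlation tail \eqref{eq:correlation_function} combined with the local CLT $p_n(x)\asymp n^{-1}e^{-|x|^2/cn}$ makes $\IE[h(X_n)]\asymp (\log n)^a/n$, whose partial sums up to $N$ grow like $(\log N)^{a+1}$. This is the origin of the scaling $\beta_N\propto(\log N)^{-(a+2)/2}$: one power of $\log N$ short of making $\beta_N^2\sum h(X_n)$ of order one on the diagonal, exactly as in the $\log N$ scaling for the uncorrelated $2d$ case.

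Next comes the genuinely new part, and what I expect to be the main obstacle: the second moment does \emph{not} collapse into a geometric series. Because the correlation is critical, contributions from different dyadic (really $N^{k/2M}$‐scale) time/space windows do not factor through a single ratio; instead, as the introduction signals, each window $[N^{(k-1)/2M},N^{k/2M}]$ contributes a distinct multiplicative factor, and the total second moment must be extracted by an induction over scales. Concretely, I would partition the renewal‐type sum over $n_1<\dots<n_j$ according to which window each increment falls in, and set up a recursion for the generating series $\sum_j (\beta_N^2)^j\sum_{\text{sequences in window }\le k}(\dots)$; the self‐consistent limit of this recursion should be governed by an ODE of Bessel type, with the order $\alpha=\frac{a+1}{a+2}-1$ emerging from the exponent $a+1$ in the diagonal growth and the change of variables matching \eqref{eq:defJalpha}. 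The finiteness of the $L^2$ norm then holds precisely while the relevant Bessel‐type quantity stays positive, i.e. for $\hat\beta<z_a$ (the first zero of $\tilde J_\alpha$), and in that regime one reads off $\lim_N \IE[(W_N^{\beta_N})^2]=\tilde J_\alpha(\hat\beta)^{-1}=e^{\lambda^2}$. This also yields the Erd\H os–Taylor–type statement (Theorem \ref{th:ErdosTaylor}) as a corollary, since it amounts to the same renewal computation without the disorder.

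With uniform $L^2$ control in hand for $\hat\beta<z_a$, I would finish the convergence in \eqref{eq:clt} exactly along the lines indicated in the introduction: $(W_N^{\beta_N})_N$ is a positive martingale, bounded in $L^2$, hence uniformly integrable and convergent; the second‐moment limit $e^{\lambda^2}$ forces the limiting law of $\log W_N^{\beta_N}$ to have the matching first two moments, and Gaussianity is imported from the decoupling argument of Cosco–Donadini by writing $\log W_N^{\beta_N}\approx\sum_{k=1}^M\log Z_{k,M}^{\beta_N}$ with $Z_{k,M}^{\beta_N}$ independent (using independence in time) partition functions over $[N^{(k-1)/M},N^{k/M}]$, verifying a Lindeberg/Lyapunov condition for this triangular array via the moment bounds, and applying the classical CLT; the mean $-\lambda^2/2$ comes from $\IE[W_N^{\beta_N}]=1$ together with $\IE[(W_N^{\beta_N})^2]\to e^{\lambda^2}$. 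For the complementary regime $\hat\beta\ge z_a$, the $L^2$ bound blows up, so instead I would show $W_N^{\beta_N}\to 0$ in probability by a truncated second‐moment (size‐biasing) argument: restrict the chaos expansion to a sub‐window where the partial Bessel‐type sum is still subcritical, show this truncation is close to $W_N^{\beta_N}$ in $L^1$ yet has vanishing mean of the complement, or equivalently use that the martingale limit $W_\infty^{\beta_N}$ must be $0$ because any subcritical truncation already saturates the $L^2$ budget — this is the standard strong‐disorder/second‐moment dichotomy, and at the boundary $\hat\beta=z_a$ one needs the mild extra input that $\IE[(W_N^{\beta_N})^2]\to+\infty$ (logarithmically) rather than staying bounded, which again falls out of the Bessel asymptotics near $z_a$. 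The main difficulty throughout remains the multi‐scale induction for the second moment; once that is established, both halves of the theorem are fairly mechanical consequences.
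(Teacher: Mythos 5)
Your high-level strategy is the paper's: extract $\lim_N\IE[(W_N^{\beta_N})^2]=\tilde J_\alpha(\hat\beta)^{-1}$ from a multi-scale induction with Bessel-type recursion (the paper's Theorem~\ref{thm:main} and Section~\ref{sec:recursive_formula}), decouple $\log W_N^{\beta_N}$ across the time windows $[N^{(k-1)/M},N^{k/M}]$ into a sum of independent increments (Section~\ref{sec:decoupling}), and run a Lindeberg/Lyapunov CLT (Theorem~\ref{thm:general}). The byproduct identification of the second moment with the Erd\H{o}s--Taylor statement is also correct. However, a few steps are either wrong or glossed over in a way that hides real difficulty.

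First, $(W_N^{\beta_N})_N$ is \emph{not} a martingale: $\beta_N$ varies with $N$, so the martingale-plus-$L^2$-boundedness route you invoke (``uniformly integrable and convergent'') is not available. You are dealing with a triangular array, and the only route to the distribution of $\log W_N^{\beta_N}$ is the decoupling plus CLT that you mention afterwards; the martingale remark should be discarded, and the two moments of $W_N$ do not by themselves determine the law of $\log W_N$ --- that identification is a \emph{consequence} of log-normality, not an argument for it.

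Second, you do not address the Lyapunov (or Lindeberg) condition beyond saying it follows ``via the moment bounds.'' This is precisely where the paper has to work (Remark~\ref{rem:Productformula} and Lemma~\ref{lemma:fourth_moment}): to control a $2+\varepsilon_0$ moment one uses hypercontractivity of polynomial chaos, and hypercontractivity requires the chaos to be in \emph{independent} variables. With the correlated noise of \eqref{eq:correlation_def} this fails a priori; one must also choose $Z_{k,M,N}$ in the product form \eqref{eq:defZkMN} (not the exponential form $\Theta_{\frac{k-1}{M},\frac{k}{M}}$ you might first reach for) and rewrite $\omega(n,x)=\sum_y h_0(y)\,\omega_0(n,x+y)$ using $h=h_0\star h_0$, so that $U_{k,M,N}$ becomes a chaos in the i.i.d.\ variables $\omega_0$. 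Without this step the argument stalls.

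Third, your treatment of $\hat\beta\ge z_a$ is a genuine gap. The ``truncated second moment/size-biasing'' sketch is vague and again implicitly treats $W_\infty^{\beta_N}$ as a martingale limit, which doesn't exist here. The paper follows the fractional-moment monotonicity route of \cite{caravenna1}: show $\beta\mapsto\IE[(W_N^\beta)^\theta]$ is non-increasing for some $\theta\in(0,1)$ via the FKG inequality, which combined with the CLT below $\hat\beta_c$ and the second-moment divergence at $\hat\beta_c$ forces $W_N^{\beta_N}\to 0$ in probability for $\hat\beta\ge z_a$. In the correlated setting FKG is not automatic; it is saved by the fact that $\omega$ is a nonnegative linear combination of independent Gaussians (again $h_0\ge 0$). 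Your sketch does not engage with either of these points and I don't see how to make it rigorous without them.
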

\begin{remark} \label{rk:a=0}
When $a=0$, we have $z_a = \pi/2$ and $\tilde J_{\alpha}(\hat \beta) = \cos(\hat \beta)$.
We note that $\lambda^2$ diverges to $\infty$ as $\hat \beta\to z_a$.
\end{remark}
\smallskip 

Our next result is a generalization of the Erd\"os-Taylor theorem \cite{ErdosTaylor}. The latter states that the rescaled local time $\frac{\pi}{\log N}\sum_{n=1}^N \mathds{1}_{S_{2n}=0}$ converges in law to an exponential distribution $\mathcal E(1)$ with mean 1. 
In line with our
setting,
the following happens if one replaces the 
delta potential $\mathds{1}_{x=0}$
by the long-range potential $h(x)$ with (critical) decay as in  \eqref{eq:correlation_function}.
\begin{theorem}
\label{th:ErdosTaylor}
As $N\to\infty$, the following convergence in distribution holds
\begin{equation} \label{eq:CVErdosTaylor}
    \frac{(a+2)^2\, 2^{a}}{\left(\log N\right)^{a+2}}  \sum_{n=1}^N h(S_{2n}) \cvlaw \tau_{0,1}^{\alpha}\,,
\end{equation}
where $\tau_{0,1}^{\alpha}$ is the first hitting time of $1$ of a Bessel process of order $2\alpha+2$ started at $0$.
\end{theorem}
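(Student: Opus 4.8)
The plan is to prove \eqref{eq:CVErdosTaylor} by the method of moments, using the same moment estimates that enter the second-moment analysis of $W_N^{\beta_N}$. Set $Y_N:=\frac{(a+2)^2 2^a}{(\log N)^{a+2}}\sum_{n=1}^N h(S_{2n})$ and $\delta:=2\alpha+2=\frac{2(a+1)}{a+2}\in(0,2)$, so the origin is an accessible, instantaneously reflecting boundary of the Bessel process of dimension $\delta$ and $\tau_{0,1}^\alpha$ is a.s.\ finite. Solving the Feynman--Kac problem $\tfrac12 f''+\tfrac{\delta-1}{2r}f'=-\mu f$ on $(0,1)$ with reflecting condition at $0$ and $f(1)=1$ gives $f(r)=r^{-\alpha}J_\alpha(\sqrt{2\mu}\,r)/J_\alpha(\sqrt{2\mu})$, hence, using $J_\alpha(z)=(z/2)^\alpha\,\tilde J_\alpha(z)/\Gamma(\alpha+1)$,
\begin{equation*}
\mathbb E_0\big[e^{\mu\,\tau_{0,1}^\alpha}\big]=\frac{1}{\tilde J_\alpha(\sqrt{2\mu})},\qquad 0\le\mu<\tfrac12 z_a^2 ,
\end{equation*}
where $\mathbb E_0$ is the expectation for this Bessel process started at $0$; the right-hand side is finite and analytic near $\mu=0$ (note $\tilde J_\alpha(0)=1$, and the link with \eqref{eq:lambda} at $\sqrt{2\mu}=\hat\beta$). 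Since $Y_N\ge0$, it suffices to show $\mathbb E[e^{\mu Y_N}]\to 1/\tilde J_\alpha(\sqrt{2\mu})$ for $\mu$ in a right-neighbourhood of $0$; expanding the exponential, this reduces to $\mathbb E[Y_N^k]\to\mathbb E_0[(\tau_{0,1}^\alpha)^k]$ for every $k$, the interchange of sum and limit being justified by uniform moment bounds.

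\textbf{Moment expansion.} Write $\mathbb E\big[(\sum_{n\le N}h(S_{2n}))^k\big]=\sum_{n_1,\dots,n_k}\mathbb E[\prod_i h(S_{2n_i})]$. Because $h=h_0\star h_0$ is positive definite, $0\le h\le h(0)=1$, so any contribution with a coincidence $n_i=n_j$ is dominated by a $k$-point sum with strictly fewer free time indices and is $O((\log N)^{(k-1)(a+2)})=o((\log N)^{k(a+2)})$; the surviving term is $k!\sum_{0<n_1<\dots<n_k\le N}\mathbb E[\prod_i h(S_{2n_i})]$, which by the Markov property equals $k!$ times $\sum$ over positions of $\prod_{i=1}^k p_{2(n_i-n_{i-1})}(x_i-x_{i-1})\,h(x_i)$, with $n_0=x_0=0$. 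The basic input is the two-dimensional local CLT for $(S_n)$ together with \eqref{eq:correlation_function}, which gives $u(m):=\mathbb E[h(S_{2m})]\sim \frac{(\log m)^{a+1}}{2^{a+1}(a+1)\,m}$, hence $\mathbb E[\sum_{n\le N}h(S_{2n})]=\sum_{m\le N}u(m)\sim \frac{(\log N)^{a+2}}{2^{a+1}(a+1)(a+2)}$; this already settles $k=1$: $\mathbb E[Y_N]\to\frac{a+2}{2(a+1)}=\frac1\delta=\mathbb E_0[\tau_{0,1}^\alpha]$. For $k\ge2$ the chain does \emph{not} factor as $\prod_i u(n_i-n_{i-1})$ — the correlations between $h(S_{2n_{i-1}})$ and $h(S_{2n_i})$ at comparable spatial scales are of leading order — and one must control the composed kernels uniformly over the $\sim\log N$ scales. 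This is carried out by the induction over exponential windows $[N^{(j-1)/2M},N^{j/2M}]$ that governs $\mathbb E[(W_N^{\beta_N})^2]$; after the substitution $n=N^{\theta}$, $|x|=e^{v}$ the $k$-fold sum converges to an explicit iterated integral.

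\textbf{Identifying the limit.} The cleanest way to recognise that iterated integral — and the reason a Bessel process appears — is the continuum picture. On logarithmic scales, away from the origin, the radial part of the planar walk is a time-changed Brownian motion, $\log|S_n|\approx\log|S_0|+\gamma_{H_n}$ with $H_n=\int_0^n|S_s|^{-2}\,\mathrm ds$ and $\gamma$ a standard Brownian motion, and running the walk up to time $N$ corresponds to running $\gamma$, essentially from $0$, up to its first hitting time $\tau_V$ of $V:=\tfrac12\log N$. By the first Ray--Knight theorem the occupation-density field $\big(L^{V-x}_{\tau_V}(\gamma)\big)_{x\in[0,V]}$ converges, once correctly normalised, to a squared Bessel process $\xi$ of dimension $2$ started at $0$; since $h(e^v)\sim v^a e^{-2v}$, the occupation formula turns $\sum_{n\le N}h(S_{2n})$ into $\int_0^V v^a L^v_{\tau_V}(\gamma)\,\mathrm dv$ up to the constant fixed by the $k=1$ computation, and Brownian/Bessel scaling yields
\begin{equation*}
Y_N\ \cvlaw\ \frac{(a+2)^2}{4}\int_0^1(1-y)^a\,\xi_y\,\mathrm dy,\qquad \xi=\mathrm{BESQ}(2)\text{ started at }0 .
\end{equation*}
The Pitman--Yor formula for exponential functionals of squared Bessel processes, applied with the measure $\propto(1-y)^a\,\mathrm dy$, reduces the Laplace transform of the right-hand side to a Sturm--Liouville problem of Bessel type whose value is exactly $1/\tilde J_\alpha(\sqrt{2\mu})$; comparing with the first paragraph identifies the limit in law with $\tau_{0,1}^\alpha$. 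Equivalently, at the discrete level one checks that the iterated integrals satisfy the recursion $m_k=k\,G[m_{k-1}]$ for the Bessel Green operator $G$ (scale function $\propto r^{2-\delta}$, speed measure $\propto r^{\delta-1}\,\mathrm dr$), with walk time $n\leftrightarrow$ a power of the radial coordinate.

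\textbf{Main obstacle.} Everything hinges on the middle step: the uniform, sharp control of the composed transition-and-$h$ kernels across all $\sim\log N$ scales, i.e.\ in essence the full second-moment computation $\mathbb E[(W_N^{\beta_N})^2]$. The delicate points are the transitional regime $|x|^2\asymp t$ of the one-step kernel $\mathbb E_x[h(S_{2t})]$ and the precise tracking of the \emph{extra logarithm} separating its diffusive behaviour ($|x|^2\ll t$, value $\asymp(\log t)^{a+1}/t$) from its ballistic behaviour ($|x|^2\gg t$, value $\asymp h(x)$): it is exactly this multi-scale logarithmic dependence that makes the limit a genuine (non-degenerate) Bessel hitting time rather than a constant, and that forces the window-by-window induction in place of the one-shot geometric-series argument available in the space--time independent case.
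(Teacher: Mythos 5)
Your proposal is correct and, at bottom, rests on exactly the same mechanism as the paper's proof: convergence of the moment generating function of $\frac{(a+2)^2 2^a}{(\log N)^{a+2}}\sum_{n\le N}h(S_{2n})$ to $\tilde J_\alpha(\sqrt{2\mu})^{-1}$, together with the identification of $\tilde J_\alpha(\sqrt{2\mu})^{-1}$ as $\mathbb{E}_0[e^{\mu\tau_{0,1}^\alpha}]$. You derive the latter from scratch by solving the Feynman--Kac/Sturm--Liouville problem, whereas the paper simply cites Kent; both are fine. The difference is that you then detour through the method of moments, the skew-product representation, Ray--Knight and Pitman--Yor to \emph{re-derive} the mgf convergence, when in fact that convergence is precisely the already-established identity \eqref{eq:secondMomentFlat} of Theorem~\ref{thm:main}: setting $\mu=\hat\beta^2/2$, one has $\beta_N^2=2^{a-1}(a+2)^2\hat\beta^2/(\log N)^{a+2}=\mu\cdot 2^{a}(a+2)^2/(\log N)^{a+2}$, so $\mathbb{E}\big[e^{\beta_N^2\sum_{n\le N}h(S_{2n})}\big]\to \tilde J_\alpha(\hat\beta)^{-1}$ is verbatim the statement that the mgf of $Y_N$ at $\mu$ tends to $\tilde J_\alpha(\sqrt{2\mu})^{-1}$ for $\mu\in[0,z_a^2/2)$, and since $Y_N\ge0$ this interval of convergence already implies convergence in law. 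Your Ray--Knight/Pitman--Yor picture gives valuable intuition for \emph{why} a Bessel hitting time appears (and parallels the continuum discussion in the remark following the theorem in the paper), but it is a heuristic route to a fact you already have available as a black box; the moment-by-moment reduction also introduces mild extra work (moment-determinacy of $\tau_{0,1}^\alpha$, interchange of sum and limit) that the direct mgf argument avoids. So there is no gap in principle, but the proof can be collapsed to two lines once you observe that the required Laplace-transform convergence is exactly what Theorem~\ref{thm:main} provides.
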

We refer to \cite{Kent78} for a discussion about hitting times of Bessel processes and their connection to Bessel functions.
Theorem \ref{th:ErdosTaylor} is a consequence of the convergence of the moment generating functions obtained in Theorem \ref{thm:main}, see Section \ref{subsec:ErdosTaylorLaplace} for details.

Although the result above seems rather standard, we could not find its statement in the literature. However, as pointed out to us by C.\@ Tardif and L.\@ Béthencourt, a similar convergence can be found in the continuum setting (replace the random walk by a Brownian motion and the sum by an integral), for a function $h(x) = \phi(\log |x|)/|x|^2$ with $\phi:\mathbb R\to \mathbb R$ bounded and integrable
 (where our setting  \eqref{eq:correlation_function} considers a non-integrable function $\phi(x)\sim|x|^{-a}$ as $|x|\to\infty$ ($a>-1$) but assumes $h$ to be bounded at the origin), see Revuz and Yor \cite[p.\@432, Theorem 4.2]{revuzYor}. Because of this discrepancy, the scaling order and the limit obtained there differ from ours, but the same line of proof should adapt to deal with a continuous analogue of Theorem \ref{th:ErdosTaylor}. Essentially, the idea is to use the skew-product representation of the planar Brownian motion \cite[p.193, Theorem (2.11)]{revuzYor} and diffusive scaling to approximate  $\int_{0}^t h(B_s)\mathrm ds$ by $(\log t)^2 \int_{0}^{T_1} \phi((\log t)\beta_s)\mathrm ds$, where 
 $\beta_s$ is a (dimension 1) Brownian motion and $T_1$ is its first hitting time of $1$. Then, the last integral can be approximated by $(\log t)^a \int_{0}^{T_1} \beta_s^a \mathbf{1}_{\beta_s > 0}\mathrm ds$, where by the use of a scale function and speed measure, the latter can be related to the first hitting time of a Bessel process. We stress that while this approach may lead to another proof of Theorem \ref{th:ErdosTaylor}, it is not clear that it could give a precise control of the moment generating function as in Theorem \ref{thm:main}, Equation \eqref{eq:secondMomentFlat}.

\subsection{Second moment estimates}\label{sec:second_moment}
The first steps leading to Theorem \ref{thm:clt} are results on the second moment of partition functions. In particular, we prove convergence of the second moment for the following objects: for $s\leq t \leq N$, for $x\in \mathbb{Z}^2$, let
\begin{equation}\label{eq:partition_function}
    W_{s,t}(x) =W_{s,t}^{\beta_N}(x):=E_x\left[e^{\sum_{n=s+1}^t\{\beta_N\omega(n,S_n)-\frac{\beta_N^2}{2}\}}\right], \quad W_t(x):=W_{0,t}(x).
\end{equation}
and consider for $0<\zeta <\xi\leq 1$, \begin{equation}\label{eq:partition_function_z}
      \Theta_{\zeta,\xi}(x)=\Theta_{\zeta,\xi}^{\beta_N}(x):=W_{\lfloor N^\zeta\rfloor,\lfloor N^\xi\rfloor}(x), \quad \Theta_\xi (x):= W_{\lfloor N^{\xi} \rfloor}(x)\,.
 \end{equation}
 For simplicity of notation, when $x=0$ we drop the dependence on the starting point and the integer part in quantities such as $N^\xi$.
\begin{theorem}\label{thm:main}
    Let $\hat \beta \in [0,z_a)$. Then,
    \begin{equation} \label{eq:secondMomentFlat}
\lim_{N\to\infty} \IE\left[W_{N}^2\right] = \lim_{N\to\infty} E\left[e^{\beta_N^2 \sum_{n=1}^N h(S_{2n})}\right] =  \tilde J_{\alpha}\big( \hat{\beta}\big)^{-1}\,.
\end{equation}

Moreover, for all $0\leq \zeta< \xi\leq 1$,
    \begin{equation} \label{eq:2ndMomentZetaXilimit}
        \lim_{N\to\infty} \IE\left[\Theta_{\zeta,\xi}^2\right] = \lim_{N\to\infty} E\left[e^{\beta_N^2 \sum_{n=N^{\zeta}+1}^{N^{\xi}} h(S_{2n})}\right] = \frac{\tilde J_{\alpha}\left( \hat{\beta}\zeta^{\frac{a+2}{2}}\right)}{\tilde J_{\alpha}\left( \hat{\beta}\,\xi^{\frac{a+2}{2}}\right)}\,.
    \end{equation}
Furthermore, for all $\delta >0$ and every $x_0\in [N^{\frac{\zeta - \delta}{2}},N^{\frac{\zeta+\delta}{2}}]\cap \mathbb{Z}_{even}^2$,
\begin{equation} \label{eq:2ndMomentSpaceVersion}
\IE\left[\Theta_\xi(x_0)\Theta_\xi(0)\right]= (1+o(1)) \frac{\tilde J_{\alpha}\Big( \hat{\beta}\zeta^{\frac{a+2}{2}}\Big)}{\tilde J_{\alpha}\Big( \hat{\beta}\xi^{\frac{a+2}{2}}\Big)},
\end{equation}
where $o(1)=\Delta_{\delta,N}(x_0)$ satisfies
\[
\limsup_{\delta\to 0} \limsup_{N\to\infty} \sup_{x_0\in [N^{\zeta - \delta},N^{\zeta+\delta}]\cap \mathbb{Z}_{even}^2} |\Delta_{\delta,N}(x_0)|=0.
\]

Finally, we have
\begin{equation} \label{eq:allsup2ndMoment}
    \sup_{N\geq 1} \sup_{x_0\in \mathbb{Z}^2}\IE[W_N(x_0)W_N(0)] < \infty.
\end{equation}
\end{theorem}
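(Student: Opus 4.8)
\emph{Reduction to one walk.} The plan is to prove all four identities by first reducing them, through a Gaussian computation, to the asymptotics of a single exponential moment of the simple random walk, and then evaluating that moment by a multiscale moment expansion which produces the Bessel function. Since $\omega$ is a centred Gaussian field with covariance \eqref{eq:correlation_def}, introducing two independent copies $S,S'$ of the walk and integrating out $\omega$ gives, for $s\le t$ and $x\in\mathbb Z^2$, $\IE[W^{\beta_N}_{s,t}(x)^2]=E_x^{\otimes 2}[e^{\beta_N^2\sum_{n=s+1}^{t}h(S_n-S_n')}]$, and likewise $\IE[\Theta_\xi(x_0)\Theta_\xi(0)]=E^{\otimes 2}[e^{\beta_N^2\sum_{n\le N^\xi}h(S_n-S_n'+x_0)}]$ (the per-step factors $e^{-\beta_N^2}$ being absorbed by $h(0)=1$). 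Applying the $45^\circ$ rotation $(a,b)\mapsto(a+b,a-b)$ turns the $2$d walk into two independent $1$d walks, and since a $1$d walk satisfies $X_n-X_n'\eqlaw X_n+X_n'\eqlaw X_{2n}$ --- even as processes, by comparing increments --- one obtains $(S_n-S_n')_{n\ge 0}\eqlaw(S_{2n})_{n\ge 0}$. Writing $q_{2m}(y):=P(S_{2m}=y)$, everything thus reduces to the asymptotics of $E[e^{\beta_N^2\sum_{n\in I}h(S_{2n}+x)}]$ for a time interval $I$ and $x\in\mathbb Z^2$; this already gives the first equalities in \eqref{eq:secondMomentFlat} and \eqref{eq:2ndMomentZetaXilimit}.

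\emph{Moment expansion and the multiscale mechanism.} I would then expand $E[e^{\beta_N^2\sum_{n\in I}h(S_{2n})}]=\sum_{k\ge 0}\tfrac{\beta_N^{2k}}{k!}E[(\sum_{n\in I}h(S_{2n}))^k]$. The diagonal contributions (coinciding time indices) are negligible after multiplication by $\beta_N^{2k}$: since $0\le h\le h(0)=1$ and, by \eqref{eq:correlation_function}, $h^2\in\ell^1(\mathbb Z^2)$, each coincidence costs a positive power of $\log N$. There remains $k!\sum_{n_1<\dots<n_k,\,n_i\in I}E[\prod_i h(S_{2n_i})]$, with $E[\prod_i h(S_{2n_i})]=\sum_{x_1,\dots,x_k}\prod_i q_{2(n_i-n_{i-1})}(x_i-x_{i-1})\,h(x_i)$ (where $n_0=0$, $x_0=0$). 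The crucial point --- the difficulty highlighted in the introduction --- is that, because $h\notin\ell^1(\mathbb Z^2)$ under \eqref{eq:correlation_function}, this chain does \emph{not} decouple as $\prod_i E[h(S_{2n_i})]$; such a decoupling would yield a geometric series and hence a wrong, purely exponential limit for $\IE[W_N^2]$. Instead, through the heavy tail of $h$ the walk's position at one index genuinely influences the next, and the correct asymptotics must be extracted by a \emph{logarithmic multiscale} induction: split $I$ along the scales $N^{j/2M}$, $0\le j\le 2M$, and, for each of the $2M$ windows, keep track both of the number of indices $n_i$ it contains and of the scale-dependent weight $(\log\sqrt{n_i})^{a}$ arising from the factor $(\log|S_{2n_i}|)^{a}$ in \eqref{eq:correlation_function}, all controlled by uniform local-CLT estimates on the convolutions $q_{2m}\star h$ at every scale. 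Letting $N\to\infty$ (with $\beta_N$ as in \eqref{eq:beta_N}) and then $M\to\infty$, the ordered sum converges, for $I=\{N^\zeta,\dots,N^\xi\}$, to an iterated integral over the simplex $\{\zeta<\theta_1<\dots<\theta_k<\xi\}$ with a weight tuned so that the resummation over $k$ is precisely Kent's series \cite{Kent78} for the moment generating function, at parameter $\hat\beta^2/2$, of the first hitting time of level $1$ by a Bessel process of dimension $2\alpha+2=\tfrac{2(a+1)}{a+2}$ started at $0$; equivalently, the limit equals $\tilde J_\alpha(\hat\beta\,\xi^{(a+2)/2})^{-1}\,\tilde J_\alpha(\hat\beta\,\zeta^{(a+2)/2})$. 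This is the step that produces the Bessel function, and the same mechanism underlies Theorem~\ref{th:ErdosTaylor}.

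\emph{Assembling the four statements.} Taking $\zeta=0$, $\xi=1$ above gives \eqref{eq:secondMomentFlat}, and the general window case is \eqref{eq:2ndMomentZetaXilimit}; the range $\hat\beta\in[0,z_a)$ is exactly the condition for the resummation over $k$ --- equivalently, for the Taylor series of $\tilde J_\alpha^{-1}$ about $0$ --- to converge, $z_a$ being the first zero of $\tilde J_\alpha$ (see \eqref{eq:defza}). For \eqref{eq:2ndMomentSpaceVersion}, with a shift $x_0$ satisfying $|x_0|\in[N^{(\zeta-\delta)/2},N^{(\zeta+\delta)/2}]$ the walk $S_{2n}+x_0$ reaches a neighbourhood of the origin only after time $\asymp|x_0|^2\approx N^\zeta$; up to that time $h(S_{2n}+x_0)\lesssim\sup_{|y|\asymp|x_0|}h(y)$, so its contribution to the exponent is $\lesssim\beta_N^2(\zeta\log N)^{a+1}\to 0$, whereas from time $\asymp N^\zeta$ on the walk sits at scale $\gg|x_0|$ and the analysis above applies on $\{N^\zeta,\dots,N^\xi\}$ with the offset forgotten, giving the same ratio of Bessel functions; the errors are uniform in $x_0$ over the stated range and vanish as $\delta\to 0$. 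Finally, for \eqref{eq:allsup2ndMoment}: when $|x_0|\ge\sqrt N\,(\log N)^2$ the walk $S_{2n}$, $n\le N$, stays, after translation by $x_0$, away from a neighbourhood of the origin, so $E[\sum_{n\le N}h(S_{2n}+x_0)]\to 0$ and, modulo a truncation handling the rare excursions of the walk to distance $\gg\sqrt N$, the expectation tends to $1$; for the remaining $x_0$ one invokes \eqref{eq:2ndMomentSpaceVersion} with a suitable $\zeta=\zeta(x_0)$ together with the uniformity above, the bound being finite and uniform because $\tilde J_\alpha$ is strictly decreasing on $[0,z_a)$, whence $\tilde J_\alpha(\hat\beta\,\zeta^{(a+2)/2})\le 1$ and $\tilde J_\alpha(\hat\beta\,\xi^{(a+2)/2})\ge\tilde J_\alpha(\hat\beta)>0$.

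\emph{Main obstacle.} The heart of the proof is the multiscale estimate: showing that the decoupling $E[\prod_i h(S_{2n_i})]\approx\prod_i E[h(S_{2n_i})]$ (which would produce a geometric series) is false, and replacing it by the induction over the $2M$ logarithmic windows that reproduces \emph{exactly} the Bessel combinatorics --- in particular the factors $\Gamma(k+\alpha+1)$ in the denominators of the power series of $\tilde J_\alpha^{-1}$, in place of the plain factorials a geometric series would give. This requires sharp and uniform local-CLT control of $q_{2m}\star h$ across all scales, careful bookkeeping of which $n_i$ lie in which window (including the delicate contributions near the window boundaries), and control of the $M\to\infty$ limit --- everything being moreover carried out uniformly in the starting point $x$ and in $\zeta,\xi$, as needed for \eqref{eq:2ndMomentSpaceVersion} and \eqref{eq:allsup2ndMoment}.
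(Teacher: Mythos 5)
Your overall strategy tracks the paper's: reduce $\IE[W_{s,t}(x)W_{s,t}(y)]$ to $E_{x-y}[e^{\beta_N^2\sum h(S_{2n})}]$ via Gaussian integration (the paper's covariance formula, Lemma~\ref{lem:second_moment}), expand into a chaos series, run a logarithmic multiscale decomposition across $\lesssim M$ windows, pass to an iterated integral, and recognise the Bessel ratio. Two methodological differences are cosmetic: you expand the exponential directly and argue that coinciding time indices are subleading, whereas the paper avoids this via the sandwich $1+\beta_N^2 h\leq e^{\beta_N^2 h}\leq 1+\bar\sigma_N^2 h$ of \eqref{eq:secondMomentFormulast}; and you phrase the multiscale cut as a temporal partition of $I$, whereas the paper first sums out the times into $R_{N^\xi}(x_i-x_{i-1})$ and then splits the spatial variables $x_i$ over the annuli $A_l$ of \eqref{eq:Al} — by diffusive scaling these are interchangeable.

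The genuine gap is at the step you yourself flag as the main obstacle: you assert that after the $N\to\infty$, $M\to\infty$ limits one obtains ``an iterated integral over the simplex with a weight tuned so that the resummation over $k$ is precisely Kent's series,'' identifying the limit with $\tilde J_\alpha(\hat\beta\zeta^{(a+2)/2})/\tilde J_\alpha(\hat\beta\xi^{(a+2)/2})$. This is exactly what needs to be proved, and it is not a formality. The resulting integral is not the ordered simplex integral; after symmetrising it is
$\int_{[0,1]^k}\prod_{i=1}^k(1-s_i\vee s_{i-1})\,s_i^a\,\mathrm ds$,
with nearest-neighbour couplings through $s_i\vee s_{i-1}$, and one must show that the generating series $\sum_k z^{2k}f_k(x)$ of these integrals (with the correct boundary value $s_0$) satisfies the sharp identity of Theorem~\ref{lem:recursiveLemmaFormula}. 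In the paper this comes from the second-order ODE $f_{k+1}''=-x^a f_k$ of Remark~\ref{rk:f''} and a recursion on the coefficients $c_n^{(k)}$ that yields the factor $\psi^a(n)$ of \eqref{eq:psia} — precisely the $1/(n!\,\Gamma(n+\alpha'))$ structure of $\tilde J_\alpha^{-1}$ rather than the $1/n!$ of a geometric series. Saying ``the weight is tuned so that Kent's series appears'' substitutes the conclusion for the argument; you also do not explain why powers $x^{n(a+2)}$ (and no others) arise, which is what forces the exponent $(a+2)/2$ inside $\tilde J_\alpha$. Relatedly, you describe a single multiscale ``limit'' but do not separate the matching upper and lower bounds; in the paper these are distinct arguments (Lemmas~\ref{thm:upper_bound} and~\ref{prop:lowerBound}), the upper bound in particular needing the splitting $\Psi^{lead}+\Psi^{negl}$ and the geometric-resummation bound \eqref{eq:boundpsineglY} to discard ``bad'' scale tuples. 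Finally, your route to \eqref{eq:allsup2ndMoment} via \eqref{eq:2ndMomentSpaceVersion} with $\zeta=\zeta(x_0)$ is circuitous and delicate for $|x_0|$ near $\sqrt N$ (where $\zeta\to 1$ and the window $[N^\zeta,N]$ degenerates); the paper instead extracts \eqref{eq:finiteness2ndMoment}, hence \eqref{eq:allsup2ndMoment}, directly from the uniform control of $\Psi^{lead}_{\xi,M,N}(l_0)$ over all $l_0\in\mathbb Z_+$.
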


\subsection{A central limit theorem}
As mentioned in the introduction,
the proof of Theorem \ref{thm:clt} follows the approach of \cite{CoscoDonadini}, up to minor modifications. Below, we state a central limit theorem (Theorem \ref{thm:general}) that summarizes its main steps. In particular, the proof of Theorem \ref{thm:general} is obtained by combining Proposition 3.3, Lemma 4.1 and Lemma 4.2 of  \cite{CoscoDonadini}. 
\begin{theorem}[Log-CLT]\label{thm:general}
    Let $\{Z_N\}_{N \in \N}$ and $\{Z_{k,M,N}\}_{1 \le k \le M, \, M , \, N \in \N}$ be respectively a sequence of positive random variables and an array of random variables with mean one and finite variance, satisfying the following conditions:        \begin{enumerate}[label=(\roman*)]
            \item \label{item:indipendence} \emph{Independence:} for $M,N\in\mathbb N$, $\{Z_{k,M,N}\}_{ 1 \le k\leq M}$ is a collection of independent random variables.    
            \item \label{item:decoupling}\emph{Decoupling in $L^2$:} for $M\in\mathbb N$, $Z_N$ admits a multiplicative approximation in $L^2$, i.e.\ \begin{equation}
            \label{eq:decoupingCLT}
             Z_N - \prod_{k=1}^{M} Z_{k,M,N} \xrightarrow[N \to \infty]{L^2} 0\,.
            \end{equation}        
    \end{enumerate}
    Moreover, denoting the centered variables $U_{k,M,N}:=Z_{k,M,N}-1$, we assume:
    \begin{enumerate}[label=(\roman*), start=3]
        \item \label{item:secmomCLT} \emph{Second moment convergence:} there exists $\sigma_M^2>0$ and $\sigma^2>0$ such that
        \begin{equation}
            \label{eq:secmomCLT}
        \lim_{N \to \infty} \sum_{k=1}^M \mathbb{E} \big[ U_{k,M,N}^2 \big] = \sigma_M^2\,, \, \ \text{for }M \in \N \qquad \text{and} \qquad \lim_{M \to \infty} \sigma_M^2 = \sigma^2\,.
        \end{equation}
        \item \label{item:Lindeberg} \emph{Lyapunov condition:}
        \begin{equation} \label{eq:Lindeberg}\exists \, \varepsilon_0>0 : \quad \limsup_{M\to\infty} \,\limsup_{N\to \infty}\, \sum_{k=1}^M \mathbb{E}\big[ \big|U_{k,M,N}\big|^{2+\varepsilon_0} \big]=0\,.
        \end{equation}
    \end{enumerate}
Then, $\log Z_N \, \cvlaw \, \mathcal{N}\left(-\frac{\sigma^2}{2},\sigma^2\right)\,.$
\end{theorem}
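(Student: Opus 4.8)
The plan is to prove Theorem~\ref{thm:general} by reducing the convergence of $\log Z_N$ to a triangular-array central limit theorem applied to the sums $\sum_{k=1}^M \log Z_{k,M,N}$, and then letting $M\to\infty$ along with a diagonal argument. First I would set up the two-parameter limit carefully: for fixed $M$, I work with the independent variables $\{Z_{k,M,N}\}_{1\le k\le M}$ as $N\to\infty$, and only afterwards send $M\to\infty$. The skeleton is: (1) show $\sum_{k=1}^M \log Z_{k,M,N}$ converges in distribution, as $N\to\infty$, to $\mathcal N(-\tfrac{\sigma_M^2}{2},\sigma_M^2)$; (2) show that replacing the product $\prod_k Z_{k,M,N}$ by $Z_N$ introduces a negligible error, so $\log Z_N$ has the same limit as $\sum_k \log Z_{k,M,N}$ up to an $M$-dependent error that vanishes as $M\to\infty$; (3) combine via a standard $\varepsilon/3$-type argument on characteristic functions or the L\'evy metric.

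\smallskip
\textbf{Step 1 (CLT for the sum of logs, fixed $M$).} Write $U_{k}:=U_{k,M,N}=Z_{k,M,N}-1$, so that $\log Z_{k,M,N}=\log(1+U_k)$. The natural decomposition is $\log(1+U_k)=U_k-\tfrac12 U_k^2+R_k$ with $|R_k|\le C|U_k|^3$ on the event $\{|U_k|\le 1/2\}$. Using the Lyapunov hypothesis~\ref{item:Lindeberg} with exponent $2+\varepsilon_0$, one controls the higher-order remainders and the contribution of the event $\{|U_k|>1/2\}$ (which has small probability uniformly by Markov applied to the $(2+\varepsilon_0)$-moments). The leading term $\sum_k U_k$ is a sum of independent, mean-zero variables whose variances sum to $\sigma_M^2$ by~\ref{item:secmomCLT}; the Lyapunov condition~\ref{item:Lindeberg} is exactly what is needed to apply the Lyapunov central limit theorem, giving $\sum_k U_k \cvlaw \mathcal N(0,\sigma_M^2)$. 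The quadratic term $-\tfrac12\sum_k U_k^2$ must be shown to converge in probability to the constant $-\tfrac12\sigma_M^2$: its expectation is $-\tfrac12\sum_k\IE[U_k^2]\to-\tfrac12\sigma_M^2$, and its variance is bounded by $\sum_k \IE[U_k^4]\le \sum_k \IE[|U_k|^{2+\varepsilon_0}]^{?}$ --- here one interpolates, or more simply bounds $\IE[U_k^4]$ crudely and uses that $\max_k\IE[U_k^2]\to 0$ (again a consequence of~\ref{item:Lindeberg}) to get $\mathrm{Var}(\sum_k U_k^2)\to 0$. Then Slutsky yields $\sum_{k=1}^M\log Z_{k,M,N}\cvlaw \mathcal N(-\tfrac12\sigma_M^2,\sigma_M^2)$ as $N\to\infty$.

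\smallskip
\textbf{Step 2 (from the product to $Z_N$).} By~\ref{item:decoupling}, $Z_N-\prod_{k=1}^M Z_{k,M,N}\to 0$ in $L^2$, hence in probability, as $N\to\infty$. Since both $Z_N$ and $\prod_k Z_{k,M,N}$ are positive with mean one, and since the product converges in law to the lognormal $e^{\mathcal N(-\sigma_M^2/2,\sigma_M^2)}$ (a tight family bounded away from $0$ in probability), one deduces $\log Z_N - \log\prod_k Z_{k,M,N}\to 0$ in probability; equivalently $\log Z_N\cvlaw\mathcal N(-\tfrac12\sigma_M^2,\sigma_M^2)$ for each fixed $M$. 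This is where one has to be a little careful, because $\log$ is not uniformly continuous near $0$: the clean way is to note that the $L^2$ convergence of $Z_N-\prod_k Z_{k,M,N}$ together with $\IE[Z_N]=\IE[\prod_k Z_{k,M,N}]=1$ and uniform $L^2$-boundedness (from the second-moment hypotheses) forces tightness, and then argue on a subsequence where $\prod_k Z_{k,M,N}$ stays in a compact subset of $(0,\infty)$. Finally, since the distribution of $\log Z_N$ does not depend on $M$ but its limit along each $M$ is $\mathcal N(-\tfrac12\sigma_M^2,\sigma_M^2)$, and $\sigma_M^2\to\sigma^2$ by~\ref{item:secmomCLT}, a standard diagonal/$\varepsilon$-argument (e.g.\ comparing characteristic functions: $|\IE[e^{it\log Z_N}]-e^{-it\sigma^2/2-t^2\sigma^2/2}|$ is bounded by the fixed-$M$ error plus $|e^{-it\sigma_M^2/2-t^2\sigma_M^2/2}-e^{-it\sigma^2/2-t^2\sigma^2/2}|$, both made small by choosing $M$ large then $N$ large) gives $\log Z_N\cvlaw\mathcal N(-\tfrac12\sigma^2,\sigma^2)$.

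\smallskip
\textbf{Main obstacle.} I expect the delicate point to be Step~2, namely transferring convergence in law from the product to $Z_N$ despite working with the logarithm. The $L^2$ decoupling controls additive errors, but $\log$ is only locally Lipschitz, so one genuinely needs the lognormal limit to be supported away from $0$ (which it is) and a uniform-integrability / tightness input to rule out the product being atypically close to $0$; the second-moment bound~\eqref{eq:allsup2ndMoment}-type control is what makes this go through. The rest --- the Lyapunov CLT for $\sum_k U_k$ and the concentration of $\sum_k U_k^2$ --- is routine once the hypotheses~\ref{item:secmomCLT} and~\ref{item:Lindeberg} are in hand.
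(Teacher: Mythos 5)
There is a genuine gap in Step~1, and it propagates to Step~3. You claim that for \emph{fixed} $M$, as $N\to\infty$, $\sum_{k=1}^M \log Z_{k,M,N}\cvlaw \mathcal N\big(-\tfrac12\sigma_M^2,\sigma_M^2\big)$ by the Lyapunov CLT. This cannot work: a CLT needs the number of summands to grow, but here there are exactly $M$ of them. Moreover, condition~\ref{item:Lindeberg} is a genuine double limit --- it does \emph{not} say that $\sum_{k\le M}\E[|U_{k,M,N}|^{2+\varepsilon_0}]\to 0$ as $N\to\infty$ for each fixed $M$, so the Lyapunov ratio need not vanish for fixed $M$. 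There is also an internal contradiction in the claim: since $\log Z_N$ does not depend on $M$, if $\log Z_N\cvlaw\mathcal N(-\tfrac12\sigma_M^2,\sigma_M^2)$ for every $M$ (as your Steps 1--2 would jointly yield), then all the $\sigma_M^2$ would have to coincide, which would make hypothesis \ref{item:secmomCLT} vacuous. In fact, in the application relevant to this paper, the $N\to\infty$ limit of $\sum_{k\le M}\log Z_{k,M,N}$ at fixed $M$ is not $\mathcal N(-\tfrac12\sigma_M^2,\sigma_M^2)$ at all; it is a sum of $M$ independent (non-Gaussian) log-partition increments, and the Gaussian only emerges as $M\to\infty$. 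A second, lesser issue: you repeatedly treat each $Z_{k,M,N}=1+U_{k,M,N}$ as positive so that $\log Z_{k,M,N}$ makes sense, but positivity is \emph{only} assumed for $Z_N$; the remark in the paper is explicit that the $Z_{k,M,N}$ need not be positive, and one must handle the exceptional event $\{\exists k:\,U_{k,M,N}\le -1\}$ using condition~\ref{item:Lindeberg}.

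The repair is not to try to run a CLT at fixed $M$, but to Taylor-expand the characteristic function of each factor and keep the errors explicit. By independence, $\E\big[e^{it\sum_k\log Z_{k,M,N}}\big]=\prod_{k=1}^M\E\big[e^{it\log(1+U_{k,M,N})}\big]$; on the high-probability event $\{|U_{k,M,N}|\le \tfrac12\}$ (whose complement has probability $O(\E[|U_{k,M,N}|^{2+\varepsilon_0}])$ by Markov) one expands $\log(1+u)=u-\tfrac12 u^2+O(|u|^{2+\varepsilon_0})$ and uses $\E[U_{k,M,N}]=0$ to obtain
\begin{equation*}
\log \E\big[e^{it\log(1+U_{k,M,N})}\big]
= -\tfrac{it}{2}\,\E[U_{k,M,N}^2]-\tfrac{t^2}{2}\,\E[U_{k,M,N}^2]
+ O\big(\E[|U_{k,M,N}|^{2+\varepsilon_0}]\big)\,.
\end{equation*}
Summing over $k$, the leading term tends to $-\tfrac{it}{2}\sigma_M^2-\tfrac{t^2}{2}\sigma_M^2$ by~\ref{item:secmomCLT}, and the aggregate error is $O\big(\sum_k\E[|U_{k,M,N}|^{2+\varepsilon_0}]\big)$, which is controlled by~\ref{item:Lindeberg} \emph{only after taking the double limit}. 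Step~2 should then transfer this estimate on characteristic functions to $\log Z_N$ using the $L^2$ decoupling~\ref{item:decoupling} together with anti-concentration near $0$ of $\prod_k Z_{k,M,N}$ (again obtained from~\ref{item:Lindeberg}), and Step~3 is the usual three-$\varepsilon$ comparison on characteristic functions using $\sigma_M^2\to\sigma^2$. So your overall architecture is sound, but the inner mechanism of Step~1 must change from ``Lyapunov CLT at fixed $M$'' to ``quantitative Taylor expansion with an error absorbed in the $M\to\infty$ limit.''
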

\begin{remark}
    We emphasize that there is no need to assume that the $Z_{k,M,N}$ are positive. This is a consequence of the Lyapunov condition  \ref{item:Lindeberg}, which  yields some concentration of $Z_{k,M,N}$ around its mean. See e.g.\ equation (25) in \cite{CoscoDonadini} (and replace the supremum there by a sum).
\end{remark}

\begin{remark}
    The Lyapunov condition \ref{item:Lindeberg} can be relaxed to the usual Lindeberg condition:
    \begin{equation*}
        \forall \, \varepsilon_0>0 : \quad \limsup_{M\to\infty} \,\limsup_{N\to \infty}\, \sum_{k=1}^M \mathbb{E}\big[ \big(U_{k,M,N}\big)^{2}\,\mathds{1}_{\{|U_{k,M,N}|> \varepsilon_0\}}  \big]=0\,.
    \end{equation*}
\end{remark}

By Theorem \ref{thm:general}, the proof of Theorem \ref{thm:clt} reduces to verifying points \ref{item:indipendence}-\ref{item:Lindeberg}  with $Z_N:=W_N$ and a well-chosen $\{Z_{k,M,N}\}_{1\leq k\leq M}$. Regarding the latter quantity, we set
(let $t_0=0$)
\begin{equation} \label{eq:defZkMN}
Z_{k,M,N} \coloneqq E\left[\prod_{n=t_{k-1}+1}^{t_{k}} \left\{1+\beta_N\omega(n,S_n)\right\}\right], \quad t_k = \lceil N^{\frac{k}{M}}\rceil \quad 1\leq k\leq M\,.
\end{equation}

By construction, 
 $\{Z_{k,M,N}\}_{1\leq k\leq M}
$ clearly satisfies condition
\ref{item:indipendence}.
In Section \ref{sec:decoupling}, we show that condition \ref{item:decoupling} holds and Section \ref{sec:increments} deals with points \ref{item:secmomCLT} and \ref{item:Lindeberg}.

\begin{remark}
    \label{rem:Productformula}
    By analogy with \cite{CoscoDonadini}, a more natural choice for \( Z_{k,M,N} \) would be the quantity  $\Theta_{\frac{k-1}{M},\frac{k}{M}} = E[\exp(\sum_{n=t_{k-1}}^{t_k}\{\beta_N \omega(n,S_n)-\beta_N^2/2\}) ]$. In fact, $W_N\sim \prod_{k\leq M} \Theta_{\frac{k-1}{M},\frac{k}{M}}$ holds in $L^2$ and outlines the  asymptotic independence of the contributions of the different time windows $[t_{k-1},t_{k}]$. But the difficulty with this choice is to verify point \ref{item:Lindeberg}.
    Considering instead the product form \eqref{eq:defZkMN}
   allows to write $Z_{k,M,N}$ as a polynomial chaos
   in \emph{independent} (Gaussian) random variables,
   see Lemma \ref{lemma:fourth_moment}, in particular Equation \eqref{eq:X_mDecU_N}.
   Then, the Lyapunov estimate \ref{item:Lindeberg} derives from  \emph{hypercontractivity for polynomial chaos} \cite{caravenna2}, for which independence of the chaos variables is a crucial assumption.
\end{remark}

\subsection*{Structure of the paper.}
Section \ref{sec:2ndmoment} gathers proofs and estimates on second moment results presented in Section \ref{sec:second_moment} as well as the proof of the Erd\"os-Taylor Theorem \ref{th:ErdosTaylor}. In particular, the latter is proved in Section \ref{subsec:ErdosTaylorLaplace}, while the second moment convergence given in Theorem \ref{thm:main} is shown in Section \ref{subsec:proofOf2ndMoment}.

The rest of the paper is devoted to proving the log-normality result stated in Theorem \ref{thm:clt}, which follows from Theorem \ref{thm:general} once verified its assumptions. In particular, Section \ref{sec:decoupling} establishes an $L^2$-decoupling, while Section \ref{sec:increments} provides higher moment estimates. Together, these properties ensure the conditions required in Theorem \ref{thm:general}, leading to a log-normal limit structure. The proof of Theorem \ref{thm:clt} is concluded in Section \ref{sec:end_of_proof}. 

Section \ref{sec:recursive_formula} provides the detailed computations underlying the recursive structure that culminates in the limit \eqref{eq:2ndMomentZetaXilimit}.

Finally, Section \ref{sec:openQuestions} is devoted to concluding comments and related open problems that may be of interest for future research.

Appendices \ref{app:techtools} and \ref{sec:approximation} collect proofs of technical lemmas exploited throughout the paper and omitted in the main text.

\subsection*{Acknowledgments} The authors are deeply grateful to Quentin Berger and Gaspard Gomez for the insightful discussions and their precious advice regarding the proof of Theorem \ref{lem:recursiveLemmaFormula} and to Alberto Chiarini for suggesting that the approach of \cite{CoscoDonadini} may extend to correlated environment.
They also thank Camille Tardif and Loïc Béthencourt for
the discussion below Theorem \ref{th:ErdosTaylor}, Francesco Caravenna, Nikos Zygouras, Rongfeng Sun and Ofer Zeitouni for their helpful comments and advice, Jhih-Huang Li and Shuta Nakajima for discussions that motivated this work. 

Most of the research for the present paper was carried out while F.C. was supported by the Luxembourg National Research Fund (AFR/22/17170047/Bilateral-GRAALS).
F.C. also acknowledges support from the European Union’s Horizon 2020 research and innovation programme under the Marie Skłodowska-Curie grant agreement No 101034255. F.C.\ and A.D.\ acknowledge the support of INdAM/GNAMPA. 

\section{Second moment computations} 
\label{sec:2ndmoment}

Theorem \ref{thm:general} morally reduces the proof of Theorem \ref{thm:clt} to controlling second and $2+\varepsilon$ moments of $W_{s,t}$. 
The following lemma yields a close expression of the second moment, in terms of an expectation with respect to the simple random walk.
\begin{lemma}[Covariance formula]\label{lem:second_moment}
Let $0\leq s<t \leq N$ and $x,y\in\mathbb{Z}^2$. Then
\begin{equation}\label{eq:second_moment}
    \IE[W_{s,t}(x)W_{s,t}(y)]=E_{x-y}\left[e^{\beta_N^2\sum_{n=s+1}^{t} h(S_{2n})}\right].
\end{equation}
Furthermore, with $\bar \sigma_N^2 := e^{\beta_N^2 } \beta_N^2$,
    \begin{equation} \label{eq:secondMomentFormulast}
    E_{x}\left[\prod_{n=s+1}^{t} \{1+\beta_N^2 h(S_{2n})\}\right] \leq \IE[W_{s,t}(x)W_{s,t}(0)] \leq E_{x}\left[\prod_{n=s+1}^{t} \{1+\bar \sigma_N^2  h(S_{2n})\}\right].
    \end{equation}
\end{lemma}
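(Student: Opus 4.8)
The statement to prove is Lemma~\ref{lem:second_moment}, the covariance formula. Let me sketch a proof plan.

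\textbf{Proof strategy for Lemma \ref{lem:second_moment}.}

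The plan is to exploit the Gaussian nature of the environment together with the defining formula \eqref{eq:partition_function}. First I would write the product $W_{s,t}(x)W_{s,t}(y)$ as a double expectation over two independent simple random walks $S$ and $S'$ started at $x$ and $y$ respectively, using that $E_x$ and $E_y$ act on independent copies:
\begin{equation*}
W_{s,t}(x)W_{s,t}(y) = E_x^{S}\otimes E_y^{S'}\left[e^{\sum_{n=s+1}^t\{\beta_N(\omega(n,S_n)+\omega(n,S'_n))-\beta_N^2\}}\right].
\end{equation*}
Then I would take $\IE$ inside (justified by Fubini/Tonelli, since everything is nonnegative) and compute the Gaussian expectation $\IE[e^{\beta_N \sum_{n}(\omega(n,S_n)+\omega(n,S'_n))}]$. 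Writing $G := \beta_N\sum_{n=s+1}^t(\omega(n,S_n)+\omega(n,S'_n))$, this is a centered Gaussian, so $\IE[e^G]=e^{\frac12\mathrm{Var}(G)}$. Using the covariance structure \eqref{eq:correlation_def}, namely $\IE[\omega(n,y)\omega(m,z)]=\mathds 1_{n=m}h(y-z)$ with $h(0)=1$, one gets
\begin{equation*}
\tfrac12\mathrm{Var}(G) = \tfrac{\beta_N^2}{2}\sum_{n=s+1}^t\big(h(0)+h(0)+2h(S_n-S'_n)\big) = \beta_N^2(t-s) + \beta_N^2\sum_{n=s+1}^t h(S_n-S'_n).
\end{equation*}
The term $\beta_N^2(t-s)$ exactly cancels the $-\beta_N^2(t-s)$ coming from the $-\beta_N^2/2$ normalizations (two walks, hence a total of $\beta_N^2$ per step). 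This leaves $\IE[W_{s,t}(x)W_{s,t}(y)] = E_x^S\otimes E_y^{S'}[e^{\beta_N^2\sum_{n=s+1}^t h(S_n-S'_n)}]$. Finally, the difference walk $S_n - S'_n$ of two independent simple random walks on $\mathbb Z^2$ has the law of $S_{2n}$ under $P_{x-y}$ (a standard fact: the difference of two independent lazy-free SRWs is a SRW run at double speed, or more precisely $S_n-S'_n \overset{(d)}{=} \tilde S_{2n}$ with $\tilde S$ a SRW on $\mathbb Z^2$ started at $x-y$ — this uses the specific structure of the $\mathbb Z^2$ nearest-neighbor walk, e.g.\ via the rotation of coordinates decoupling it into two $\pm1$ walks). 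This yields \eqref{eq:second_moment}.

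For the sandwich bound \eqref{eq:secondMomentFormulast}, I would start from \eqref{eq:second_moment} with $y=0$ and compare $e^{\beta_N^2 h}$ with the linear expression $1+\bar\sigma_N^2 h$ term by term along the product $e^{\beta_N^2\sum_n h(S_{2n})}=\prod_n e^{\beta_N^2 h(S_{2n})}$. Since $h\geq 0$ and $h$ is bounded, for each $n$ we have the elementary inequalities $1+\beta_N^2 h(S_{2n}) \leq e^{\beta_N^2 h(S_{2n})} \leq 1 + \beta_N^2 e^{\beta_N^2\|h\|_\infty} h(S_{2n})$; bounding $\|h\|_\infty$ by $h(0)=1$ (since $h$ is maximized at $0$ as a covariance with $h(0)=1$... though one should be a little careful here — $h$ being a nonnegative covariance function does satisfy $|h(x)|\leq h(0)$, hence $\|h\|_\infty = 1$), gives $e^{\beta_N^2 h(S_{2n})}\leq 1+e^{\beta_N^2}\beta_N^2 h(S_{2n}) = 1+\bar\sigma_N^2 h(S_{2n})$. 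Taking the product over $n=s+1,\dots,t$ and then the expectation $E_x$ yields both inequalities in \eqref{eq:secondMomentFormulast}.

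\textbf{Main obstacle.} I expect the only genuinely delicate point to be the identification of the law of the difference of two independent SRWs with a time-changed SRW on $\mathbb Z^2$ — this is where dimension $d=2$ and the product structure of the lattice walk enter, and one should state it cleanly (possibly citing a standard reference) rather than reprove it. The Gaussian computation and the term-by-term inequalities are routine; the $h(0)=1$ normalization is precisely what makes the cancellation of the $\beta_N^2(t-s)$ terms work exactly.
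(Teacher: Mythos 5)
Your proposal is correct and follows essentially the same route as the paper's proof: Fubini plus the Gaussian moment-generating formula to obtain \eqref{eq:second_moment}, and the term-by-term Taylor bounds $1+\beta_N^2 h \le e^{\beta_N^2 h}\le 1+e^{\beta_N^2\|h\|_\infty}\beta_N^2 h$ together with $\|h\|_\infty = h(0)=1$ for \eqref{eq:secondMomentFormulast}. The only cosmetic difference is that you spell out the variance computation and the identification $S_n^1-S_n^2\eqlaw S_{2n}$, which the paper leaves implicit (and which in fact holds in any dimension, since $X-X'\eqlaw X+X'$ for symmetric independent increments, so the appeal to a $\mathbb Z^2$-specific rotation is unnecessary though not wrong).
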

\begin{remark}
    In the i.i.d.\ setting where $\omega(n,x)$ are space-time independent Gaussians, the identity \eqref{eq:second_moment} holds by replacing $h(x)$ by the Dirac function  $\mathds{1}_{x=0}$. 
\end{remark}

\begin{proof} By Fubini's theorem,
    \begin{align*}
        \IE[W_{s,t}(x)W_{s,t}(y)]
        &=\IE\left[E_{x,y}^{\otimes 2}\left[e^{\sum_{n=s+1}^{t}\{\beta_N\omega(n,S_n^1)+\beta_N\omega(n,S_n^2)-\beta_N^2\}}\right]\right]\\
        &=E_{x-y}\left[e^{\beta_N^2\sum_{n=s+1}^{t} h(S_{2n})}\right],
    \end{align*}
    where the last equality holds since $(\omega(n,x))_{(n,x)\in\mathbb{N}\times \mathbb{Z}^2}$ is a collection of Gaussian variables.
    By Taylor approximation, for all $x\in \mathbb Z^2$,
$1+\beta_N^2 h(x) \leq e^{\beta_N^2 h(x)} \leq 1+e^{\beta_N^2\Vert h \Vert_{\infty}}\beta_N^2 h(x)$, which yields the second part of the lemma (observe that necessarily $\Vert h \Vert_\infty = 1$ since $h(x) = \IE[\omega(n,x) \omega(n,0)]$, by the Cauchy-Schwarz inequality and $h(0)=1$).
\end{proof}

In the second moment estimates of Sections
    \ref{subsec:upperbound} and \ref{subsec:lowerBound}, we will consider a general term of the form
$E_{x}\left[\prod_{n=s+1}^{t} \{1+\sigma_N^2  h(S_{2n})\}\right]$, where $\sigma_N^2 \sim \beta_N^2$ as $N\to\infty$. In particular, the term $\sigma_N^2$ will be identified with $\bar \sigma_N^2$ in the upper bound and directly with $\beta_N^2$ in the lower bound.
This is convenient to apply the following formula, whose proof is self-contained.

\begin{lemma}[Second moment expansion]
We have
    \begin{align} 
&E_{x}\left[\prod_{n=s+1}^{t} \{1+\sigma_N^2 h(S_{2n})\}\right]=1+\sum_{k=1}^\infty \sigma_N^{2k} \sum_{s+1\leq n_1<\dots<n_k\leq t} E_{x}\left[\prod_{i=1}^k h(S_{2n_i})\right] \nonumber\\
    &=\sum_{k=0}^\infty \sigma_N^{2k} \sum_{s+1\leq n_1<\dots<n_k\leq t}\,\sum_{x_1,\dots,x_k\in\mathbb{Z}^2} \prod_{i=1}^k  h(x_i)p_{2(n_i-n_{i-1})}(x_i-x_{i-1})\label{eq:second_moment_chaos}\,,
    \end{align}
    where $n_0=0$, $x_0=x$ and $\sigma_N^2>0$.
\end{lemma}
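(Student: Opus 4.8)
The plan is a direct expansion of the finite product followed by an application of the Markov property of the simple random walk; the argument is purely algebraic and elementary.

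First I would observe that the product over $n$ from $s+1$ to $t$ is \emph{finite}, so no convergence issue arises. Expanding $\prod_{n=s+1}^{t}\{1+\sigma_N^2 h(S_{2n})\}$ by selecting, in each factor, either the summand $1$ or the summand $\sigma_N^2 h(S_{2n})$, and grouping the resulting $2^{t-s}$ terms according to the set of indices at which the second summand is chosen, yields the identity
\[
\prod_{n=s+1}^{t}\{1+\sigma_N^2 h(S_{2n})\} = \sum_{k=0}^{t-s}\sigma_N^{2k}\sum_{s+1\leq n_1<\dots<n_k\leq t}\prod_{i=1}^{k}h(S_{2n_i})\,,
\]
with the $k=0$ term read as $1$. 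Since $h$ is bounded, every term is integrable, so applying $E_x$ and using linearity of expectation over this finite sum gives the first claimed equality (one may let $k$ run to $\infty$ since all terms with $k>t-s$ have an empty inner index set and hence vanish).

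For the second equality I would fix $k\geq 1$ and $s+1\leq n_1<\dots<n_k\leq t$ and evaluate $E_x\!\left[\prod_{i=1}^k h(S_{2n_i})\right]$ by conditioning successively on the values of $S_{2n_1},\dots,S_{2n_k}$. By the Markov property and spatial homogeneity of the simple random walk, given $S_{2n_{i-1}}=x_{i-1}$ the variable $S_{2n_i}$ has law $p_{2(n_i-n_{i-1})}(\cdot-x_{i-1})$, where $p_m$ is the $m$-step transition kernel and we adopt the conventions $n_0=0$, $x_0=x$. Writing $x_i:=S_{2n_i}$ and summing over $x_1,\dots,x_k\in\mathbb{Z}^2$ therefore gives
\[
E_x\left[\prod_{i=1}^{k}h(S_{2n_i})\right]=\sum_{x_1,\dots,x_k\in\mathbb{Z}^2}\prod_{i=1}^{k}h(x_i)\,p_{2(n_i-n_{i-1})}(x_i-x_{i-1})\,.
\]
Substituting this into the first identity and absorbing the constant $1$ as the $k=0$ term of the series produces \eqref{eq:second_moment_chaos}.

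There is no genuine obstacle in this lemma: the only points requiring care are the bookkeeping of the index sets in the combinatorial expansion and fixing the conventions $n_0=0$, $x_0=x$ so that the $i=1$ factor correctly reads $p_{2n_1}(x_1-x)$. Boundedness of $h$ guarantees absolute convergence of all sums involved, so the exchange of the (finite) sum over $k$ and $(n_i)$ with the expectation, and of the expectation with the sums over the $x_i$, is justified by Fubini--Tonelli.
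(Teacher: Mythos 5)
Your proof is correct and is precisely the elementary expansion-plus-Markov-property argument that the paper takes to be self-evident (the authors supply no written proof for this lemma). There is nothing to compare: your bookkeeping of the index sets, the conventions $n_0=0$, $x_0=x$, and the justification via boundedness of $h$ and finiteness of the product are all exactly what is needed.
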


\subsection{Technical tools for the second moment}\label{sec:technicaltools}
In the following, we present two lemmas that will be repeatedly used to derive upper and lower bounds on the second moments. The proofs can be found in Appendix \ref{app:techtools}.

Fix $x \in \mathbb{Z}_{\text{even}}^2\coloneqq \big\lbrace x=(x^{(1)}, x^{(2)})\in\mathbb{Z}^2 \, : \, x^{(1)}+x^{(2)}\in 2 \Z \big\rbrace$. 
In the following, we give estimates on the asymptotic behaviour of 
\begin{equation}\label{eq:RN}
    R_{s,t}(x):=\sum_{n=s+1}^t p_{2n}(x), \qquad R_t(x) := R_{0,t}(x)\,, \qquad s,t\in\mathbb{N}, s<t\,. 
\end{equation}
(If $s$ or $t$ are real, we take the lower and upper integer parts of $s$ and $t$ in \eqref{eq:RN}).

Let $M\in \mathbb{Z}_{+}$ and 
let $B(r)$ be the $\ell^2$ norm open ball of $\mathbb R^2$ of radius $r>0$. We set
\begin{equation} \label{eq:Al}
A_0 = B(N^{\frac{1}{2M}}),\quad  A_l =   (B(N^{(l+1)/(2M)})\setminus B(N^{l/(2M)})) \cap \mathbb Z^2_{even}\,,\, l \in \mathbb{Z}_{+}\,.
\end{equation}

\begin{lemma}[Order of $R_N$]\label{lem:order_of_RNx} Let $0\leq \nu<\xi\leq 1$ and $M\in \mathbb{Z}_{+}$. Then, for all $N$ large enough (depending on $M,\nu,\xi$), 
for all integers $0\leq l<l'\leq \lceil \xi M \rceil$, $|l-l'|>1$, every $x\in A_l$ and $y\in A_{l'}$,
\begin{equation}\label{eq:order_of_RNx}
        R_{N^{\nu},N^{\xi}}(x-y)\leq \begin{cases}
            (\xi-\frac{l'}{M}+ \frac{3}{M})\frac{\log N}{\pi} \quad \text{if } \nu<\frac{l'}{M}<\xi,\\
            (\xi-\nu + \frac{3}{M})\frac{\log N}{\pi} \quad \text{if } \frac{l'}{M}<\nu,
        \end{cases}
    \end{equation}
    and
    \begin{equation}\label{eq:order_of_RNx_bis}
        R_{N^{\nu},N^{\xi}}(x-y)\geq  \begin{cases}
            (\xi-\frac{l'}{M}-\frac{3}{M})\frac{\log N}{\pi} \quad \text{if } \nu<\frac{l'}{M}<\xi,\\
            (\xi-\nu-\frac{3}{M})\frac{\log N}{\pi} \quad \text{if } \frac{l'}{M}<\nu.
        \end{cases}
    \end{equation}
Furthermore, for all $x,y\in\mathbb{Z}_{\text{even}}^2$ such that $|x|\leq N^{\frac{l}{2M}}$ and $|y|\geq N^{\frac{l+1}{2M}}$,
\begin{equation}\label{eq:order_RN_more_general}
    R_{N^\xi}(x-y)\leq 2 R_{N^\xi}(y)\leq 4 R_{N^\xi}(x),
\end{equation}
Finally, for some universal constant $c>0$, 
\begin{equation} \label{eq:ChernoffBound}
    \sup_{|y| \leq N^{\xi/2}}\sum_{|x|\geq N^{(\xi+1/M)/2}} R_N(x-y) \leq e^{-cN^{1/M}}.
\end{equation}
\end{lemma}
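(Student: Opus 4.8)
\medskip

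The plan is to reduce all four estimates to the local central limit theorem (LCLT) for the two-dimensional simple random walk, used both in its sharp form $p_{2n}(z)=\tfrac{1}{\pi n}e^{-|z|^2/(2n)}(1+o(1))$, uniformly over $z\in\mathbb{Z}^2_{\mathrm{even}}$ with $|z|^2\le n$, and in the crude two-sided form $c_-\,n^{-1}e^{-c_1|z|^2/n}\le p_{2n}(z)\le c_+\,n^{-1}e^{-c_2|z|^2/n}$, valid for all $z\in\mathbb{Z}^2_{\mathrm{even}}$ (recall $p_{2n}(z)=0$ unless $|z|\le 2n$). The one computation that underlies everything is the estimate
\[
R_{s,t}(z)=\frac1\pi\log\!\frac{t}{\max(s,\,|z|^2)}+O(1)\ \text{ when }\ \max(s,|z|^2)\le t,\qquad R_{s,t}(z)=O(1)\ \text{ otherwise,}
\]
with the $O(1)$ uniform in $s,t,z$: the indices $n\lesssim|z|^2$ are exponentially damped and, splitting $n\in\big(|z|^2 2^{-j-1},|z|^2 2^{-j}\big]$, contribute $\sum_{j\ge 0}O(e^{-c2^{j}})=O(1)$; on the range $n\gg|z|^2$ one has $p_{2n}(z)=\tfrac1{\pi n}(1+o(1))$ and $\sum_{a<n\le b}\tfrac1n=\log(b/a)+O(1)$, the $(1+o(1))$ producing only an extra $o(\log N)$.

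For \eqref{eq:order_of_RNx} and \eqref{eq:order_of_RNx_bis} I would first record that for $x\in A_l$, $y\in A_{l'}$ with $l'>l+1$ we have $|x|\le N^{(l+1)/(2M)}\le N^{l'/(2M)}\le|y|$, hence $\tfrac12|y|\le|x-y|\le 2|y|$ and in particular $\tfrac14 N^{l'/M}\le|x-y|^2\le 4N^{(l'+1)/M}$. Feeding the lower bound on $|x-y|^2$ into the displayed estimate yields \eqref{eq:order_of_RNx}: if $\nu<l'/M<\xi$ then $\max(N^\nu,|x-y|^2)\ge|x-y|^2\ge\tfrac14 N^{l'/M}$, so $R_{N^\nu,N^\xi}(x-y)\le\tfrac1\pi\big(\xi-\tfrac{l'}{M}\big)\log N+O(1)$, and the $O(1)$ together with the $o(\log N)$ from the LCLT are absorbed by $\tfrac{3}{M}\cdot\tfrac{\log N}{\pi}$ for $N$ large; if $l'/M<\nu$ one uses $\max(N^\nu,|x-y|^2)\ge N^\nu$ to get $\le\tfrac1\pi(\xi-\nu)\log N+O(1)$. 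For the matching lower bound \eqref{eq:order_of_RNx_bis} I would discard every index $n\le K|x-y|^2$ for a large fixed $K=K(M,\nu,\xi)$, so that $|x-y|^2/n\le 1/K$ on what remains and hence $p_{2n}(x-y)\ge\tfrac{1-\varepsilon}{\pi n}$ there, and then use the upper bound $K|x-y|^2\le 4KN^{(l'+1)/M}$; this gives $R_{N^\nu,N^\xi}(x-y)\ge\tfrac{1-\varepsilon}{\pi}\big(\xi-\tfrac{l'+1}{M}\big)\log N-O_K(1)$, which exceeds $\tfrac1\pi\big(\xi-\tfrac{l'}{M}-\tfrac3M\big)\log N$ for $N$ large once $\varepsilon$ is chosen small enough, and symmetrically in the case $l'/M<\nu$. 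When a relevant summation window is empty --- which forces $\xi-l'/M<1/M$ (resp.\ $\xi-\nu<1/M$) --- the claimed lower bound is $\le 0$ and there is nothing to prove.

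For \eqref{eq:order_RN_more_general}, the hypotheses give $|x|/|y|\le N^{-1/(2M)}\to 0$, hence $|x-y|=(1+o(1))|y|$ and $|x|<|y|$. If $R_{N^\xi}(y)\to\infty$ (equivalently $|y|^2\le N^{\xi-\delta}$ for some fixed $\delta>0$ along the relevant subsequence), the representation $R_{N^\xi}(z)=\tfrac1\pi\log_+\!\big(N^\xi/|z|^2\big)+O(1)$ from the previous step gives $R_{N^\xi}(x-y)=(1+o(1))R_{N^\xi}(y)\le 2R_{N^\xi}(y)$ and, using $|x|\le|y|$, $R_{N^\xi}(y)\le(1+o(1))R_{N^\xi}(x)\le 2R_{N^\xi}(x)$. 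If instead $R_{N^\xi}(y)$ stays bounded, then $|y|^2\gtrsim N^\xi$, both $R_{N^\xi}(x-y)$ and $R_{N^\xi}(y)$ are carried by indices $n\gtrsim|y|^2$, and there $p_{2n}(x-y)/p_{2n}(y)=e^{(|y|^2-|x-y|^2)/(2n)}(1+o(1))=1+o(1)$ because $|y|^2-|x-y|^2=O(|y|^2 N^{-1/(2M)})=o(n)$ uniformly; summing gives $R_{N^\xi}(x-y)\le 2R_{N^\xi}(y)$, and $R_{N^\xi}(y)\le 2R_{N^\xi}(x)$ follows by the same comparison of $y$ with the closer point $x$ (when $R_{N^\xi}(x)$ is itself $O(1)$ one checks via the crude bounds that $R_{N^\xi}(y)$ is then exponentially smaller). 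Finally, \eqref{eq:ChernoffBound} is a Chernoff bound: summing over $x$ first, its left-hand side equals $\sum_{n}P_y\!\big(|S_{2n}|\ge N^{(\xi+1/M)/2}\big)$, the sum ranging over $n$ up to the time horizon $N^\xi$, which is $\le \lceil N^\xi\rceil\,P_0\!\big(\max_{m\le 2N^\xi}|S_m|\ge\tfrac12 N^{(\xi+1/M)/2}\big)$ since $|y|\le N^{\xi/2}\le\tfrac12 N^{(\xi+1/M)/2}$ for $N$ large; by Doob's maximal inequality and the sub-Gaussian tail $P_0(|S_{2m}|\ge\lambda)\le 4e^{-\lambda^2/(Cm)}$ (bounded increments), this is at most $\lceil N^\xi\rceil\,C'e^{-c' N^{\xi+1/M}/N^\xi}=C'\lceil N^\xi\rceil\,e^{-c'N^{1/M}}\le e^{-cN^{1/M}}$ for $N$ large.

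The main obstacle is not any individual estimate but the uniform bookkeeping of scales in \eqref{eq:order_of_RNx}--\eqref{eq:order_of_RNx_bis}: one has to verify in each sub-case --- whether $x\in A_0$ or a higher annulus, how $\nu$, $l/M$, $l'/M$, $\xi$ interleave, whether the relevant summation windows are non-empty --- that the $O(1)$ and $o(\log N)$ errors fit inside the $\tfrac3M$ margins; and in \eqref{eq:order_RN_more_general} the fact that the constants $2$ and $4$ are explicit forces the separation of the divergent and bounded regimes. I would therefore prove \eqref{eq:order_of_RNx}--\eqref{eq:order_of_RNx_bis} first (they contain the only genuine computation), obtain \eqref{eq:order_RN_more_general} as a corollary of the representation they furnish, and dispatch the independent and easiest \eqref{eq:ChernoffBound} last.
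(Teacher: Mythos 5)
Your proof follows essentially the same route as the paper: reduce everything to the local CLT for the planar simple random walk, use $\sum_{a< n\le b}\tfrac1{\pi n}\approx\tfrac1\pi\log(b/a)$ and the observation that $|x-y|\asymp|y|$ when $y$ lies in a well-separated annulus, and dispatch the tail estimate by a Chernoff bound. The paper does these computations directly inside the proof (splitting the sum over $n$ at $N^{(l'\pm1)/M}$); you package the same content into a representation formula $R_{s,t}(z)=\tfrac1\pi\log\tfrac{t}{\max(s,|z|^2)}+O(1)$, which is a tidy presentational choice rather than a different method. The bookkeeping that the $O(1)$ and $o(\log N)$ errors fit inside the $3/M$ margins, your discard-$n\le K|x-y|^2$ trick for the lower bound, and your treatment of the degenerate empty-window case are all consistent with the paper's argument.

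Two small remarks. First, in \eqref{eq:ChernoffBound} you write that ``the sum ranges over $n$ up to the time horizon $N^\xi$,'' but $R_N(x-y)=\sum_{n=1}^{N}p_{2n}(x-y)$ as written actually runs up to $N$; for $\xi<1-1/M$ the terms with $n\in(N^{\xi},N]$ are \emph{not} negligible, and the bound $e^{-cN^{1/M}}$ would fail. You have silently replaced $R_N$ by $R_{N^\xi}$, which is exactly what the paper's own proof does (its displayed sum also stops at $N^\xi$), and is what is actually needed downstream in \eqref{eq:boundbound} since $R_{N^\xi}\le R_N$ is used in the wrong direction there; this looks like a typo in the lemma statement, but it is worth flagging that you prove the corrected version, not the literal one. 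Second, in the regime discussion for \eqref{eq:order_RN_more_general}, when $R_{N^\xi}(y)$ is bounded your argument that $p_{2n}(x-y)/p_{2n}(y)=1+o(1)$ uses $n\gtrsim|y|^2$, but if $|y|^2\gg N^{\xi+1/(2M)}$ no such $n\le N^\xi$ exists and the ratio at $n=N^\xi$ need not be $1+o(1)$; the chain $R_{N^\xi}(x-y)\le 2R_{N^\xi}(y)$ is then not settled by your comparison. This edge case is harmless for how the lemma is used in the paper (only the outer inequality $R_{N^\xi}(x-y)\le 4R_{N^\xi}(x)$ is invoked, and always with $|x|^2\le N^{\xi-2/M}$), and the paper's own one-line proof of \eqref{eq:order_RN_more_general} does not address it either; still, a clean proof of the statement as written would want a large-deviation estimate covering $|y|\gg N^{\xi/2}$ rather than the LCLT.
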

\medskip

The next result concerns the sum of $h$ over the exponentially growing windows $A_l$.
\begin{lemma}\label{lem:bound_h}
    Let $a>-1$, $M\in \mathbb{Z}_{+}$ and $\varepsilon>0$.  Then, there exist $N_0=N_0(M,\varepsilon)$ such that for all $N\geq N_0,$ and for all $l\in\{1,\dots,M\}$,
\begin{equation}{\label{eq:sumhAl}}
(1-\varepsilon)\pi   \left(\frac{\log N}{2M}\right)^{a+1}\left(l+\mathds{1}_{a\leq0}\right)^a\leq \sum_{x\in A_{l}} h(x) \leq (1+\varepsilon)\pi 
   \left(\frac{\log N}{2M}\right)^{a+1}{\left(l+\mathds{1}_{a>0}\right)^a}\,.
\end{equation} 
Moreover, $\sum_{x\in A_0} h(x) \leq C_a \left(\frac{\log N}{M}\right)^{a+1}$ where $C_a>0$.
\end{lemma}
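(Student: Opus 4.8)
The statement is purely analytic: it measures the mass of the slowly varying, radially decaying kernel $h$ over the (dyadic-type) windows $A_l$, and the plan is to pass from the lattice sum to a planar integral and then compute. Throughout, fix $M$ and $\varepsilon$ and introduce an auxiliary integer $K=K(M,\varepsilon,a)$, to be taken large at the end. For $l\ge 1$, every $x\in A_l$ satisfies $|x|\ge N^{1/(2M)}\to\infty$, so by \eqref{eq:correlation_function} there is $N_1(M,\varepsilon)$ with $h(x)=(1+O(\varepsilon))\,(\log|x|)^{a}/|x|^{2}$ uniformly over $\bigcup_{1\le l\le M}A_l$ once $N\ge N_1$. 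I would then slice $A_l=\bigsqcup_{j=0}^{K-1}A_{l,j}$ into multiplicatively thin sub-annuli $A_{l,j}=\{x\in\mathbb{Z}^2_{\mathrm{even}}:N^{(l+j/K)/(2M)}\le|x|<N^{(l+(j+1)/K)/(2M)}\}$; on $A_{l,j}$ the factor $\log|x|$ varies by at most $1+\tfrac1{Kl}\le 1+\tfrac1K$, so $(\log|x|)^{a}=(1+O_a(1/K))\,u_{l,j}^{\,a}$ with $u_{l,j}:=\tfrac{\log N}{2M}(l+j/K)$, and taking $K$ large folds this into the $O(\varepsilon)$. The problem thus reduces to estimating $\sum_{x\in A_{l,j}}|x|^{-2}$.

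Since $\mathbb{Z}^2_{\mathrm{even}}$ is a sublattice of $\mathbb{Z}^2$ of covolume $2$ and $\rho\mapsto\rho^{-2}$ is smooth and monotone, the usual comparison of a lattice sum with its integral gives
\[
\sum_{x\in A_{l,j}}\frac{1}{|x|^{2}}=\frac12\int_{e^{u_{l,j}}\le|y|<e^{u_{l,j+1}}}\frac{\dd y}{|y|^{2}}+\mathcal E_{l,j}=\frac{\pi\log N}{2MK}+\mathcal E_{l,j},
\]
with $|\mathcal E_{l,j}|\le C\,e^{-u_{l,j}}\le C\,N^{-1/(2M)}$, the error being controlled by (boundary length)$\times\sup|y|^{-2}$ plus the total variation of $|y|^{-2}$ on the annulus, both $O(e^{-u_{l,j}})$ because $l\ge1$. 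Summing over $j$ — using $u_{l,j}\le\log N$ to bound the weighted error $\sum_j u_{l,j}^{\,a}|\mathcal E_{l,j}|$, and noting that the leading term is $\gtrsim_{M,a}(\log N)^{a+1}$ uniformly over $1\le l\le M$ — yields, for $N\ge N_0(M,\varepsilon,a)$,
\[
\sum_{x\in A_l}h(x)=(1+O(\varepsilon))\,\pi\sum_{j=0}^{K-1}u_{l,j}^{\,a}\,\frac{\log N}{2MK}=(1+O(\varepsilon))\,\pi\int_{\frac{l}{2M}\log N}^{\frac{l+1}{2M}\log N}u^{a}\,\dd u,
\]
where the last step is once more a Riemann-sum bound for the monotone integrand $u^{a}$, with error $\le|u_{l,0}^{\,a}-u_{l,K}^{\,a}|\cdot\tfrac{\log N}{2MK}=O_{M,a}\big((\log N)^{a+1}/K\big)$, negligible for $K$ large.

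The remaining integral is $\tfrac{\pi}{a+1}\big(\tfrac{\log N}{2M}\big)^{a+1}\big[(l+1)^{a+1}-l^{a+1}\big]$. Applying the mean value theorem to $t\mapsto t^{a+1}/(a+1)$ on $[l,l+1]$ gives $\tfrac1{a+1}[(l+1)^{a+1}-l^{a+1}]=\xi_l^{\,a}$ for some $\xi_l\in(l,l+1)$, and monotonicity of $t\mapsto t^{a}$ (increasing when $a>0$, non-increasing when $-1<a\le0$) gives $(l+\mathds{1}_{a\le0})^{a}\le\xi_l^{\,a}\le(l+\mathds{1}_{a>0})^{a}$; substituting and renaming $\varepsilon$ produces \eqref{eq:sumhAl}, uniformly in $1\le l\le M$. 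For $A_0$ I would instead use the crude bound $h(x)\le\min\{1,\,2(\log|x|)^{a}/|x|^{2}\}$ (valid once $|x|\ge R_a$ for a suitable constant $R_a$), isolate the $O_a(1)$ contribution of $\{|x|<R_a\}$, and apply the same sum–integral comparison to obtain $\sum_{x\in A_0}h(x)\le C_a\big(\tfrac{\log N}{M}\big)^{a+1}$ for $N$ large.

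The main obstacle is the uniformity in $l$ of the second step: the windows $A_l$ are multiplicatively wide (ratio $N^{1/(2M)}$), so $h$ cannot be treated as constant on $A_l$, and for $l$ comparable to $M$ the target $[(l+1)^{a+1}-l^{a+1}]\asymp(a+1)l^{a}$ is smaller by a factor $\sim l$ than $l^{a+1}$; hence one cannot get $\sum_{x\in A_l}h(x)$ simply as a difference of two estimates of $\sum_{|x|<N^{l/(2M)}}h(x)$ without an unacceptable loss. The two-scale slicing (outer scale $N^{1/(2M)}$ to enter the asymptotic regime, inner scale $N^{1/(2MK)}$ to kill the Riemann error) is what circumvents this; note $K$ must be fixed after $M,\varepsilon$ but before $N$, which is compatible with the claimed dependence $N_0=N_0(M,\varepsilon)$.
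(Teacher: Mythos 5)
Your proposal is correct, and its core ingredients match the paper's: pass from $h$ to its asymptotic profile $(\log|x|)^a/|x|^2$ on $\{|x|\geq N^{1/(2M)}\}$, compare the lattice sum to a planar integral, evaluate the integral in polar coordinates via $t=\log r$ to get $\int t^a\,\dd t$, and finish with the mean value theorem to produce the cases $(l+\mathds 1_{a\le 0})^a$ and $(l+\mathds 1_{a>0})^a$. Where you depart from the paper is in the intermediate step. The paper establishes \eqref{eq:firstsumh} rather quickly: it notes the even and odd sublattice sums over $\{0<|x|\leq N\}$ differ by $O(1)$, compares the full sum $\sum_{0<|x|\le N}(\log|x|)^a/|x|^2$ to $\int_{[1,N]^2}(\log|x|)^a/|x|^2\,\dd x=\tfrac{2\pi}{a+1}(\log N)^{a+1}$, and reads off the annular asymptotic. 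You instead slice each $A_l$ into $K$ multiplicatively thin sub-annuli $A_{l,j}$ on which $(\log|x|)^a$ is effectively constant, estimate $\sum_{A_{l,j}}|x|^{-2}$ by the corresponding integral, and recover $\int t^a\,\dd t$ as the $K\to\infty$ Riemann limit. This two-scale layer is a valid way to handle the variable weight $(\log|x|)^a$, but it is not needed: the integral $\int_{N^{l/(2M)}<|y|<N^{(l+1)/(2M)}}(\log|y|)^a/|y|^2\,\dd y$ can be computed in closed form without linearizing the logarithm, and for fixed $M$ the lattice-to-integral error is $O(1)$ while the leading term is of order $(\log N)^{a+1}$, so the uniformity in $l\le M$ you flag as the "main obstacle" is harmless once $M$ and $\varepsilon$ are fixed before $N$, exactly as the lemma's order of quantifiers allows. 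In short: same strategy, with an extra (correct but dispensable) level of slicing; your version would be the one to reach for if one ever wanted $M$ to grow with $N$, but that generality is not required here.
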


\subsection{The upper bound}\label{subsec:upperbound}
In this section, we derive an upper bound for the quantity \begin{equation} \label{eq:defPhi}
\Phi_{t}(x_0) = \Phi_{t}^{\hat \beta}(x_0) := E_{x_0} \left[\prod_{n=1}^{\lfloor t\rfloor} \left\{1+\sigma_N^2 h(S_{2n})\right\}\right]\,, \quad t\leq N, x_0\in \Z^2,
\end{equation}
with $\sigma_N^2>0$ chosen such that 
$\sigma_N^2\sim \beta_N^2$ as $ N\to\infty$.

The following lemma provides key upper bounds that serve as building blocks in the proofs of Theorems \ref{thm:clt} and \ref{thm:main}. 

\begin{lemma}[Upper bound]\label{thm:upper_bound}
Let $0\leq \zeta <\xi\le 1$ and recall \eqref{eq:Al}. Then, for all $\hat \beta \in [0,z_a)$:
\begin{equation}
    \label{eq:upper_bound}
\limsup_{\delta\to 0}\, \limsup_{N\to\infty}\sup_{x_0\in [N^{\zeta - \delta},N^{\zeta+\delta}]\cap \Z^2_{even}} \Phi_{N^\xi}(x_0)\leq \frac{\tilde J_{\alpha}\Big( \hat{\beta}\zeta^{\frac{a+2}{2}}\Big)}{\tilde J_{\alpha}\Big( \hat{\beta}\xi^{\frac{a+2}{2}}\Big)}.
\end{equation}
In particular,
\begin{equation}\label{eq:finiteness2ndMoment}
   \sup_{N\geq 1} \sup_{x_0\in\mathbb{Z}^2} \,\Phi_{N}(x_0) <\infty\,.
\end{equation}

\end{lemma}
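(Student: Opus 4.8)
The goal is to establish the upper bound \eqref{eq:upper_bound} for $\Phi_{N^\xi}(x_0)$ uniformly over starting points $x_0$ in an annular shell of radius $\sim N^{\zeta/2}$, together with the uniform finiteness \eqref{eq:finiteness2ndMoment}. The plan is to expand $\Phi_{N^\xi}(x_0)$ as the chaos series \eqref{eq:second_moment_chaos}, group the summation indices $n_1 < \dots < n_k$ according to which dyadic-in-exponent time window $[t_{l-1}, t_l]$ (with $t_l = N^{l/(2M)}$) each visit lies in, and bound each elementary transition–weighted sum using the two technical lemmas. Concretely, for a fixed sequence of windows, each factor $h(x_i)\,p_{2(n_i-n_{i-1})}(x_i-x_{i-1})$ summed over the intermediate positions is controlled by combining the mass bound $\sum_{x\in A_l}h(x) \asymp (\log N / 2M)^{a+1} l^a$ from Lemma \ref{lem:bound_h} with the Green's-function estimate $R_{N^\xi}(x-y) \asymp (\xi - l'/M)\frac{\log N}{\pi}$ from Lemma \ref{lem:order_of_RNx}. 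Since $\beta_N^2 \asymp \mathfrak{C}_a^2 \hat\beta^2 (\log N)^{-(a+2)}$ and each window contributes a factor of order $(\log N)^{a+1}$ (from $h$) times $(\log N)$ (from the free sum over time), the powers of $\log N$ exactly cancel window by window, leaving a convergent numerical series in $\hat\beta$.

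The heart of the argument is to identify the limiting series correctly. Writing $\xi_l = l/M$ for the window boundaries (in the exponent variable), the $k$-fold sum over monotone window choices $l_1 \le l_2 \le \dots \le l_k$ with the per-window weights described above produces, after passing $N\to\infty$, a Riemann-sum approximation to a $k$-fold iterated integral over the simplex $\{\zeta^{(a+2)/2} \le s_1 \le \dots \le s_k \le \xi^{(a+2)/2}\}$ (after the substitution matching the exponent $\frac{a+2}{2}$), and the resulting power series is recognized as the series expansion of $\tilde J_\alpha(\hat\beta\zeta^{(a+2)/2}) / \tilde J_\alpha(\hat\beta\xi^{(a+2)/2})$ with $\alpha = \frac{a+1}{a+2}-1$; this is precisely the content worked out in detail in Section \ref{sec:recursive_formula}, so here I would invoke the recursive formula (Theorem \ref{lem:recursiveLemmaFormula}) to close the identification rather than rederiving it. The uniformity in $x_0$ over the shell $[N^{\zeta-\delta}, N^{\zeta+\delta}]$ comes from the fact that, by \eqref{eq:order_RN_more_general} and \eqref{eq:order_of_RNx}, shifting the origin by a displacement of exponent-size $\zeta \pm \delta$ only perturbs each Green's-function factor by an additive $O(\delta \log N)$ (equivalently, changes the effective lower endpoint $\zeta$ of the integration simplex by $O(\delta)$), which vanishes after first sending $N\to\infty$ and then $\delta\to 0$ — this is why the two limits are nested in that order.

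The main obstacle is controlling the tail of the chaos expansion — i.e., showing that the contribution of large $k$, and of "bad" index configurations (consecutive visits $n_i, n_{i+1}$ in the same or adjacent windows, where Lemma \ref{lem:order_of_RNx} with $|l-l'|>1$ does not directly apply, or visits to the coarse ball $A_0$), is genuinely negligible and does not spoil either the limit or the uniform bound \eqref{eq:finiteness2ndMoment}. For the same/adjacent-window terms one uses the cruder bound on $\sum_{x\in A_l}h(x)$ together with $\sup_y \sum_x h(x) p_{2n}(x-y) \lesssim \beta_N^{-2}\cdot(\text{small})$ and absorbs at most two "defect" factors per run; for the large-$k$ tail one notes that the per-window numerical factor is bounded by $C\hat\beta^2/(l\vee 1)^{|\alpha|}$-type quantities that are summable against the simplex volume $1/k!$, giving geometric-in-$k$ decay uniformly in $N$ once $\hat\beta < z_a$ (the radius of convergence being exactly the first zero $z_a$ of $\tilde J_\alpha$). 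Establishing \eqref{eq:finiteness2ndMoment} then follows by taking $\zeta = 0$, $\xi = 1$ and noting the bound is uniform in $N$ and in $x_0 \in \mathbb{Z}^2$ (the worst case being $x_0$ near the origin, i.e.\ $\zeta = 0$, which only increases the bound by a bounded factor).
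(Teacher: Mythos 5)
The central gap is that your decomposition is organized around \emph{time} windows, which forces the window indices to appear in monotone order $l_1\le\dots\le l_k$, and you then claim the limit is a Riemann approximation of a simplex integral over $\{\zeta^{(a+2)/2}\le s_1\le\dots\le s_k\le\xi^{(a+2)/2}\}$, to be matched against Theorem~\ref{lem:recursiveLemmaFormula}. This cannot be correct: a $k$-fold simplex has volume of order $1/k!$, so the resulting power series in $\hat\beta^2$ would be entire (infinite radius of convergence) no matter what bounded per-window factor you attach, whereas the target $\tilde J_\alpha\big(\hat\beta\zeta^{(a+2)/2}\big)/\tilde J_\alpha\big(\hat\beta\xi^{(a+2)/2}\big)$ has finite radius of convergence $z_a$, whose blow-up at $\hat\beta_c=z_a$ is precisely the phase transition of Theorem~\ref{thm:clt}. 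In fact the recursion you invoke, Theorem~\ref{lem:recursiveLemmaFormula}, iterates to an integral over the \emph{full cube} $[0,1]^k$ with the interlaced, non-simplex weight $\prod_i(1-t_i\vee t_{i-1})\,t_i^a$; this is exactly what the introduction flags as the striking difference from the i.i.d.\ case, where the second-moment chaos does reduce to a geometric (simplex-free) series.

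The paper's actual decomposition is by \emph{spatial} annuli $A_{l_i}=\big\{\,N^{l_i/2M}\le |x_i|< N^{(l_i+1)/2M}\,\big\}$, and crucially the labels $l_i$ are \emph{not} monotone: the walk can spread out and wander back near the origin, and those returns (where $h$ is large) are what produce the weight $\big(\xi-\tfrac{l_i}{M}\vee\tfrac{l_{i-1}}{M}\big)\big(\tfrac{l_i}{M}\big)^a$ appearing in \eqref{eq:discrintegral}, whose $N\to\infty$ limit is the cube integral in the Bessel recursion. Only the separation $|l_i-l_{i-1}|>1$ is imposed (to apply the Green's-function estimate of Lemma~\ref{lem:order_of_RNx}), while the ``bad'' configurations — near-equal consecutive $l_i$, visits to $A_0$, or $l_i$ near the outer boundary — are handled by the separate negligible-part estimate combining the one-bad-step bound \eqref{eq:bound_bad_l} with the geometric iteration \eqref{eq:boundpsineglY}. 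Your plan correctly anticipates a split into a leading part and a negligible tail, but the leading-part identification — monotone time windows producing a simplex — is the wrong picture, and following it through would not yield the Bessel function (or the phase transition) at all.
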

\begin{proof}
We first prove the upper bound \eqref{eq:upper_bound}. 
The statement in \eqref{eq:finiteness2ndMoment} will be proved at the end of the section.

Recall the definition of $A_l$ in \eqref{eq:Al}. Let $\varepsilon>0$ be arbitrary. By continuity of $\tilde J_\alpha$, the bound \eqref{eq:upper_bound} follows if we can show that for $M>0$ large enough, there exists $N_0=N_0(M,\varepsilon)$ such that
\begin{equation} \label{eq:upper_boundM}
\forall N\geq N_0, \ \forall \, l_0 \leq \lceil \xi M \rceil,\quad \sup_{x_0\in A_{l_0}} \Phi_{N^{\xi}}(x_0)\leq (1+\varepsilon)\frac{\tilde J_{\alpha}\Big( \hat{\beta}(\frac{l_0}{M})^{\frac{a+2}{2}}\Big)}{\tilde J_{\alpha}\Big( \hat{\beta}\xi^{\frac{a+2}{2}}\Big)}\,.
\end{equation}
Let $n_0=0$; then, by \eqref{eq:second_moment_chaos}, for all $x_0 \in \mathbb Z^2$, we get
\begin{align}\nonumber \Phi_{N^\xi}(x_0)   &= \sum_{k=0}^\infty \sigma_N^{2k} \sum_{1\leq n_1<\dots<n_k\leq \lfloor N^\xi\rfloor}\,\sum_{\substack{x_1,\dots,x_k\in\mathbb{Z}_{even}^2}}\prod_{i=1}^kp_{2(n_i-n_{i-1})}(x_i-x_{i-1})h(x_i)\\ \nonumber
    &\leq \sum_{k=0}^\infty \sigma_N^{2k}\sum_{\substack{1 \le u_1,\dots,u_k\le \lfloor N^\xi\rfloor}}\,\sum_{\substack{x_1,\dots,x_k\in\mathbb{Z}_{even}^2}}\prod_{i=1}^kp_{2u_i}(x_i-x_{i-1})h(x_i),
\end{align}
where the inequality follows by enlarging the sum over $u_i := n_{i}-n_{i-1} \in \{1,\ldots,N^\xi\}$ for $i= 1,\ldots, k$.
Therefore, by \eqref{eq:RN} we have 
\begin{equation}\label{eq:gath1}
    \Phi_{N^{\xi}}(x_0) \leq
    \sum_{k=0}^\infty \sigma_N^{2k}\sum_{\substack{x_1,\dots,x_k\in\mathbb{Z}_{even}^2}}\prod_{i=1}^kR_{N^\xi}(x_i-x_{i-1})h(x_i).
\end{equation}
Let $M\in\mathbb{Z}_{+}$ be a parameter that will be taken arbitrarily large and recall the definition of the annuli $(A_l)_{l\geq 0}$ in  \eqref{eq:Al}. Define for all $l_0\in \mathbb Z_+$:
\[
\Psi_{\xi , N}(l_0) :=  \sup_{x_0\in A_{l_0} }
    \sum_{k=0}^\infty \sigma_N^{2k} \sum_{ \substack{ \mathbf {\bf l} = (l_1,\dots,l_k) \in \mathbb Z_+^k}}\sum_{\substack{x_i\in A_{l_i}\\ \forall i=1,\dots, k}} \prod_{i=1}^kR_{N^\xi}(x_i-x_{i-1})h(x_i),
\]
which by \eqref{eq:gath1} satisfies
\begin{equation} \label{eq:PhiToPsi}
\forall l_0\in \mathbb Z_+,\quad \sup_{x_0\in A_{l_0}} \Phi_{N^{\xi}}(x_0) \leq \Psi_{\xi , N}(l_0).
\end{equation}
For ${\bf l} = (l_1,\ldots,l_k) \in \mathbb{Z}_{+}^k$, we introduce the following: for $j \in \{1,\ldots,k\}$ we say $l_j$ to be ``bad'' if
\begin{align}\label{eq:bad1}
&\text{either } \quad l_j \in \{0\}\cup \{\lceil \xi M \rceil -1,\lceil \xi M \rceil,\dots\}\,, \\
&\text{or }  \quad \quad \ l_{j-1}, l_j \in \{1,\ldots, \lceil \xi M \rceil -2\} \text{ with } |l_j - l_{j-1}| \le 1 \,,\label{eq:bad2}
\end{align}
where from now on we drop the notation of the integer part for simplicity.
We call $l_j$ ``good'' when it is not bad. Then, for any $k \in \N$, we introduce
\begin{align}
C_k &:=\Big\lbrace  {\bf l}  \in \mathbb{Z}_{+}^k : \  l_i \text{ is ``good''} \ \forall \, i \in \{1,\ldots,k\}  \Big\rbrace \label{eq:defCk}\\
&=\Big\lbrace \mathbf {\bf l}  \in \mathbb{Z}_{+}^k : \, l_1,\ldots,l_k \in \{ 1,\ldots,\xi M - 2 \} \text{ and } |l_i-l_{i-1}|> 1, \text{ for all } i=1,\ldots,k\Big\rbrace \,. \nonumber
\end{align}
We emphasize that $C_k$ depends on $l_0\in \mathbb{Z}_{+}$. 

Next, we decompose
\begin{equation}\label{eq:decomposition}
    \Psi_{\xi,N}(l_0)\leq \Psi^{lead}_{\xi,M,N}(l_0) + \Psi^{negl}_{\xi,M,N}\,,
\end{equation}
where
\begin{align}
    \Psi^{lead}_{\xi,M,N}(l_0) &:= \sup_{x_0\in A_{l_0}}\sum_{k=0}^\infty \sigma_N^{2k} \sum_{ \substack{ \mathbf {\bf l} \in C_k}}\sum_{\substack{x_i\in A_{l_i}\\ \forall i=1,\dots, k}} \prod_{i=1}^kR_{N^\xi}(x_i-x_{i-1})h(x_i)\,,\label{eq:PSILEADbd1} \\ \label{eq:psinegldef}
    \Psi^{negl}_{\xi,M,N} & := \sup_{l_0\in \mathbb{Z}_+} \sup_{x_0\in \mathbb{Z}_{+}}\sum_{k=0}^\infty \sigma_N^{2k} \sum_{ \substack{ \mathbf {\bf l} \not \in C_k}}\sum_{\substack{x_i\in A_{l_i}\\ \forall i=1,\dots, k}} \prod_{i=1}^kR_{N^\xi}(x_i-x_{i-1})h(x_i)\,.
\end{align}
We claim that
\begin{equation}\label{eq:psinegl}
    \limsup_{M \to \infty}\,\limsup_{\varepsilon \to 0}\,\limsup_{N\to\infty} \Psi^{negl}_{\xi,M,N} = 0\,.
\end{equation}
and that for $M$ large enough (depending on $\varepsilon$), there exists $N_0=N_0(M,\varepsilon)$ such that 
\begin{equation} \label{eq:EstimatePsilead}
   \forall N\geq N_0, \ \forall \, l_0\leq \xi M,\quad  \Psi^{lead}_{\xi,M,N}(l_0)\leq (1+\varepsilon)\frac{\tilde J_{\alpha}\Big( \hat{\beta}(\frac{l_0}{M})^{\frac{a+2}{2}}\Big)}{\tilde J_{\alpha}\Big( \hat{\beta}\xi^{\frac{a+2}{2}}\Big)}\,.
\end{equation}
Combining these two claims with \eqref{eq:decomposition} yields \eqref{eq:upper_boundM}. 
\end{proof}

\begin{proof}[Proof of \eqref{eq:EstimatePsilead}] 
As the spatial variables \( x_i \), $i=1,\ldots,k$, lie in diffusive balls with well separated exponents (i.e.\@ $l_i\neq l_{i-1}\leq M$), the overlaps \( R_{N^\xi}(x_i - x_{i-1}) \) for \( i = 1, \ldots, k \) can be estimated by using  Lemma~\ref{lem:order_of_RNx}. For $N\geq N_0$ with some $N_0=N_0(M)$, for all $l_0\leq \xi M$, we obtain
\begin{align}\label{eq:PSILEADbd2}
\Psi^{lead}_{\xi,M,N}(l_0)&\leq \sum_{k=0}^\infty \sigma_N^{2k}
\sum_{\substack{1\leq l_1, \dots, l_k\leq \xi M -2\\|l_i-l_{i-1}|>1}} \,\sum_{\substack{x_i\in A_{l_i}\\ i=1,\ldots,k}}
\prod_{i=1}^k\left\{\left(\xi-\frac{l_i}{M}\vee \frac{l_{i-1}}{M}+\frac{3}{M}\right)\frac{\log N}{\pi} \right\}h(x_i).
\end{align}
Appealing then to Lemma \ref{lem:bound_h},  we find that for $N$ large (depending on $\varepsilon,M$),
\begin{equation*} \label{eq:final_est_ub} 
    \Psi^{lead}_{\xi,M,N}(l_0) \leq \sum_{k=0}^\infty
\left((1+\varepsilon)\sigma_N^2\frac{(\log N)^{2+a}}{2^{a+1}}\right)^{k} I_{k,\zeta,\xi,M},
\end{equation*}
where
\begin{align}\label{eq:discrintegral}
I_{k,\zeta,\xi,M}  := \left(\frac{1}{M}\right)^k 
\sum_{\substack{1\leq l_1, \dots, l_k\leq \xi M -2\\|l_i-l_{i-1}|>1}} \,
    \prod_{i=1}^k \left(\xi-\frac{l_i}{M}\vee \frac{l_{i-1}}{M}+\frac{3}{M}\right)\left(\frac{l_i+\mathds{1}_{a>0}}{M}\right)^a \,.
\end{align}
Since $\sigma_N^2 \sim \beta_N^2 \sim \mathfrak C_a ^2 \, \hat \beta ^2 / (\log N)^{a+2}$ as $N \to \infty$,
so by \eqref{eq:beta_N}, we further get that, for $N$ large,
\begin{equation} \label{eq:final_est_ub2} 
    \Psi^{lead}_{\xi,M,N}(l_0) \leq \sum_{k=0}^\infty\left((1+\varepsilon){\frac{a+2}{2}} \hat \beta \right)^{2k}  I_{k,\zeta,\xi,M}\,.
\end{equation}

Next, we estimate $I_{k,\zeta,\xi,M}$ by comparison to an integral. This can be done using monotonicity properties thanks to the restrictions on the sets of summation of the $l_i$'s that we chose.
If $a>0$, we first perform a change of variables $l_i + 1 \to l_i$. In any case,
we find
\begin{equation*}
    \begin{split}
I_{k,\zeta,\xi,M}  &\le \left(\frac{1}{M}\right)^k 
\sum_{\substack{1\leq l_1, \dots, l_k\leq \xi M -1}} \,
    \prod_{i=1}^k \left(\xi +\frac{4}{M} -\frac{l_i}{M}\vee \frac{l_{i-1}}{M}\right)\left(\frac{l_i}{M}\right)^a \\
    &= \left(\frac{1}{M}\right)^k \int_{[1,\xi M]^k} \prod_{i=1}^k \left( \xi +\frac{5}{M} -\frac{\lfloor \ell_i\rfloor}{M}\vee \frac{\lfloor \ell_{i-1}\rfloor}{M}\right)\left(\frac{\lfloor t_i\rfloor}{M}\right)^a \text{d}\ell _1\ldots\text{d}\ell_k \\
    &\le \left(\frac{1}{M}\right)^k \int_{[0,\xi M]^k} \prod_{i=1}^k \left( \xi +\frac{5}{M} -\frac{ t_i}{M}\vee \frac{ \ell_{i-1}}{M}\right)\Big(\frac{ \ell_i}{M}\Big)^a \text{d}\ell_1\ldots\text{d}\ell_k\\
    &= \int_{[0,\xi]^k} \prod_{i=1}^k \left( \xi + \frac{5}{M} - \ell_i \vee t_{i-1} \right)t_i^a \,\text{d}t_1\ldots\text{d}t_k \,,
\end{split}
\end{equation*}
where we have set $\ell_0=l_0$ and $t_0=l_0/M$, and the last inequality follows from exploiting the monotonicity of the functions $(l,l') \mapsto \xi -\frac{l}{M}\vee \frac{l'}{M}$
and $l \mapsto \big(\frac{l}{M}\big)^a$ (if $a<0$, we also change of variables $\ell_i-1\to \ell_i)$. By positivity of the integrand,
\begin{equation*}
\begin{split}
    I_{k,\zeta,\xi,M}
    &\leq \int_{\big[0,\xi+\frac{5}{M}\big]^k} \prod_{i=1}^k \left( \xi + \frac{5}{M} - t_i \vee t_{i-1} \right)t_i^a \,\text{d}t_1\ldots\text{d}t_k \\
    & = \Big( \xi + \frac{5}{M}\Big)^{(2+a)k} \int_{[0,1]^k} (1 - s_i \vee s_{i-1} ) s_i^a \,\text{d}s_1\ldots\text{d}s_k\,,
    \end{split}
\end{equation*}
where the last equality follows from the change of variables $t_i=(\xi + \frac{5}{M})s_i$ for $i=1,\ldots,k$ and $s_0=\frac{t_0}{\xi + \frac{5}{M}}=\frac{l_0}{M(\xi + \frac{5}{M})}$.
Coming back to \eqref{eq:final_est_ub2}, we obtain that
\begin{equation} \label{eq:firstBoundPsiLead}
    \begin{split}
        \Psi^{lead}_{\xi,M,N}(l_0)& \leq \sum_{k=0}^\infty\left((1+\varepsilon){\frac{a+2}{2}} \hat \beta \Big( \xi + \frac{5}{M}\Big)^{\frac{a+2}{2}}\right)^{2k}  \int_{[0,1]^k} (1 - s_i \vee s_{i-1} ) s_i^a \,\text{d}s_1\ldots\text{d}s_k\\
        & = \sum_{k=0}^\infty z^{2k} f_k (s_0) = \frac{\tilde{J}_\alpha \left( {\frac{2}{a+2}} z (\frac{l_0}{M(\xi + \frac{5}{M})})^{\frac{a+2}{2}}\right)}{\tilde{J}_\alpha( {\frac{2}{a+2}} z)} \, ,
    \end{split}
\end{equation}
where we followed the notation of Section \ref{sec:recursive_formula} (recall \eqref{eq:recursiveformula}) and set $z:= (1+\varepsilon){\frac{a+2}{2}} \hat \beta \Big( \xi + \frac{5}{M}\Big)^{\frac{a+2}{2}}$, and where the last equality above follows from Theorem \ref{lem:recursiveLemmaFormula}, provided that $z < (a+2)z_a/2$, which is verified if we take $\varepsilon$ small enough and $M$ large enough (in this order) since $\hat \beta < z_a$. The continuity of $\tilde J_\alpha(\cdot)$ and \eqref{eq:firstBoundPsiLead} lead to \eqref{eq:EstimatePsilead}.
\end{proof}

\begin{proof}[Proof of \eqref{eq:psinegl}]
For $l_0\in \mathbb{Z}_{+}$,
define
\begin{equation}\label{eq:Ydefin}
    Y_{\xi,M,N}(l_0):= \sup_{x_0\in A_{l_0}} \sum_{k=1}^\infty \sigma_N^{2k}\sum_{\substack{\mathbf l \in \mathbb Z_+^k,(l_1,\dots,,l_{k-1})\in C_{k-1}\\l_k \text{ is bad}}} \sum_{\substack{x_i\in A_{l_i} \\ \forall i=1,\dots ,k}} \prod_{i=1}^k R_{N^\xi}(x_i-x_{i-1})h(x_i) \,,
\end{equation}
where we recall the definitions \eqref{eq:bad1} and \eqref{eq:bad2}.
 Then, inspired by \cite[Lemma 7.3]{CN25},
 we can obtain the following bound
\begin{equation}\label{eq:boundpsineglY}
    \Psi_{\xi,M,N}^{negl}\leq \sup_{l_0 \in \Z_{+}}\Psi_{\xi,M,N}^{lead}(l_0) \sum_{m=1}^\infty \left( \sup_{l_0 \in \Z_{+}} Y_{\xi,M,N}(l_0)\right)^m \,.
\end{equation}
We start by showing that \begin{equation}\label{eq:supfinito}
 \sup_{N,M \geq 1} \sup_{l_0 \in \Z_{+}}\Psi_{\xi,M,N}^{lead}(l_0) < \infty\,.
\end{equation}
To do so, we exploit
$$\sup_{l_0 \in \Z_{+}}\Psi_{\xi,M,N}^{lead}(l_0) = \max \Big\lbrace \sup_{l_0 \le {\xi M } }\Psi_{\xi,M,N}^{lead}(l_0) \,, \ \sup_{ l_0 \ge {\xi M + 1} }\Psi_{\xi,M,N}^{lead}(l_0) \, \Big\rbrace\,.$$
Since $\tilde J_\alpha(s) \leq 1$ for all $s\leq z_a$, by \eqref{eq:EstimatePsilead} the first argument is uniformly bounded by a constant.
Hence, we only need to bound $\Psi_{\xi,M,N}^{lead}(l_0)$ for $l_0 \ge \xi M +1$.

For $l_0\geq \xi M+1$, we bound $\Psi_{\xi,M,N}^{lead}(l_0)$ as in \eqref{eq:PSILEADbd2}, then apply the estimate \eqref{eq:order_of_RNx} in Lemma \ref{lem:order_of_RNx} as in \eqref{eq:PSILEADbd2} for $k=2,\ldots,k$. 
The only term requiring separate treatment is $R_{N^\xi}(x_1-x)$. 
Thanks to the assumption $l_1 \leq \xi M -2$ (recall \eqref{eq:defCk}), we can use \eqref{eq:order_RN_more_general} in Lemma \ref{lem:order_of_RNx} with $l=M\xi -2$ and then get
$R_{N^\xi}(x_1-x) \le 4 R_{N^\xi}(x_1)$. This way, we can bound
\begin{equation*}
    \sup_{ l_0 \ge \xi M + 1 }\Psi_{\xi,M,N}^{lead}(l_0) \le 4\,  \Psi_{\xi,M,N}^{lead}(0) \,,
\end{equation*}
where the right-hand side is uniformly bounded by  \eqref{eq:EstimatePsilead}. 
 This proves \eqref{eq:supfinito}.

To conclude, we need to show that for a suitable constant $C \in (0,\infty)$ we have
\begin{equation}
    \label{eq:stimaY}
    \limsup_{N\to\infty} \sup_{l_0 \in \Z_{+}} Y_{\xi,M,N}(l_0) \le 
    \frac{C}{M^{(1+a)\wedge 1}} 
    \,.
\end{equation}
The above estimate and \eqref{eq:supfinito} imply that, for $M$ sufficiently large, we can bound \eqref{eq:boundpsineglY} as
\begin{equation*}
    \limsup_{N\to\infty} \Psi_{\xi,M,N}^{negl} \leq \limsup_{N\to\infty}\sup_{l_0 \in \Z_{+}}\Psi_{\xi,M,N}^{lead}(l_0) \sum_{m=0}^\infty \left( \frac{C}{M^{(1+a)\wedge 1}}\right)^m \le \frac{\tilde C}{M^{(1+a)\wedge 1}}\,,
\end{equation*}
 for some finite constant $\tilde C > 0$.

We are only left to show \eqref{eq:stimaY}. 
We start by estimating the last sum in \eqref{eq:Ydefin}, given by
\begin{equation}\label{eq:estimateY}
\sigma_N^2 \sum_{l_k \text{ is bad }} \sum_{x_k \in A_{l_k}} R_{N^\xi}(x_k-x_{k-1})h(x_k).
\end{equation}
We will show that for all $M>0$, there exists $N_0(M)$ such that for all $N\geq N_0$, 
\begin{equation}\label{eq:bound_bad_l}
   \forall \, l'\leq  \xi M -2,\quad  \sigma_N^2 \sup_{y\in A_{l'}}\sum_{l \text{ is bad}} \sum_{x\in A_l}R_{N^\xi}(x-y)h(x) \leq \frac{C}{M^{(1+a)\wedge 1}},
\end{equation}
uniformly for all $l \in \mathbb{Z}_+$ satisfying either $(i)$ or $(ii)$ below (recall \eqref{eq:bad1} and \eqref{eq:bad2}):
\begin{equation}\label{eq:bad_l}
(i) \quad l=0 \text{ or } l \geq \xi M-1 , \qquad 
(ii) \quad |l-l'|\leq 1 \text{ and } l\leq \xi M -2.
\end{equation}
Then, \eqref{eq:bound_bad_l} gives an upper bound on \eqref{eq:estimateY}, yielding that
\begin{equation*}
\begin{split}
    Y_{\xi,M,N}(l_0) &\le \frac{C'}{M} \, \sum_{k=1}^\infty \sigma_N^{2(k-1)}\sum_{\substack{(l_1,\dots, l_{k-1})\in C_{k-1}}} \sum_{x_i\in A_{l_i},\, i=1,\dots,k} \prod_{i=1}^{k-1} R_{N^\xi}(x_i-x_{i-1})h(x_i) \\
    &\le \frac{C'}{M} \sup_{l_0 \in \Z_{+}} \psi^{lead}_{\xi,M,N}(l_0)\,,
    \end{split}
\end{equation*}
which implies in turn \eqref{eq:stimaY}. 
It remains to prove \eqref{eq:bound_bad_l}. 

We begin by considering the case $(i)$ in \eqref{eq:bad_l}. Fix $l'\in \mathbb{Z}_+$ such that $l'\leq \xi M -2$ and $y\in A_{l'}$. We have that
\begin{align} \nonumber
    &\quad \sigma_N^2 \sum_{l: \, (i) \text{ holds}} \, \sum_{x \in A_{l}} R_{N^\xi}(x-y)h(x)\\
    &\le \sigma_N^2 \sum_{x \in A_{\xi M -1}} R_{N^\xi}(x-y)h(x) + \sigma_N^2 \sum_{x \in A_{\xi M}} R_{N^\xi}(x-y)h(x)\label{eq:twoterms0}\\
    & \  + \sigma_N^2 {\sum_{l \geq \xi M +1}} \sum_{x \in A_{l}} R_{N^\xi}(x-y)h(x) + \sigma_N^2 \sum_{x\in A_0} R_{N^\xi}(x-y)h(x) \,.\label{eq:twoterms}
\end{align}
Regarding the first term in \eqref{eq:twoterms0}, we use that $R_N(x)\leq R_N(0)$ (this follows for example from $p_{2n}(x)\leq p_{2n}(0)$ by the Cauchy-Schwarz inequality), 
so that for $N$ sufficiently large,  it can be bounded using Lemma \ref{lem:bound_h} as 
\begin{equation}\label{eq:cM}
    \sigma_N^2 \, R_{N^\xi}(0) \sum_{x \in A_{\xi M-1}} h(x) \le C\, \sigma_N^2 \, \Big( \frac{\xi}{2}\Big)^a \frac{(\log N)^{a+2}}{M} \le \frac{C}{M}\,,
\end{equation}
for some finite constant $C >0$. 
A similar bound also works for the second term in \eqref{eq:twoterms0} and the second term in \eqref{eq:twoterms} (with the observation that in the latter case we have an additional factor $1/M^{a}$ if $a<0$). Furthermore, the first term in \eqref{eq:twoterms} is bounded above by
\begin{equation}\label{eq:boundbound}
    \begin{split}
        \sigma_N^2 \sum_{l\geq \xi M +1} \sum_{x \in A_l} R_{N^\xi}(x-y)h(x) \le C \Vert h \Vert_\infty \sum_{|x|\geq N^{(\xi + 1/M)/2}} R_N(x-y) \leq e^{-cN^{1/M}},
    \end{split}
\end{equation}
where the inequality uses \eqref{eq:ChernoffBound} and holds uniformly in $y\in A_{l'},l'\leq \xi M-2$. The right-hand side of \eqref{eq:boundbound} can be made arbitrarily small for large $N$, which concludes the proof of \eqref{eq:bound_bad_l} in case (i).

We now turn to prove \eqref{eq:bound_bad_l} in the case \eqref{eq:bad_l}-$(ii)$. For all $l'\leq \xi M -2$ and $y\in A_{l'}$, we have that
\begin{align*} \nonumber 
    & \quad \sigma_N^2 \sum_{l\leq \xi M -2: \, |l-l'|\leq 1} \sum_{x\in A_l} R_{N^\xi}(x-y)h(x)
    \le \sigma_N^2\, R_{N^\xi}(0) \sum_{l \le \xi M -2:\, |l-l'|\le 1} \sum_{x \in A_l}h(x) \\
    &\le C\sigma_N^2\,  \, \frac{(\log N)^{a+2}}{M} \sum_{l \le \xi M -2:\, |l-l'|\le 1} \bigg( \frac{l+1}{2M} \bigg)^a
    \le \frac{3 C}{M^{(1+a)\wedge 1}}\,.
\end{align*}
\end{proof}

\begin{proof}[Proof of \eqref{eq:finiteness2ndMoment}]
It follows from putting together  \eqref{eq:PhiToPsi}, \eqref{eq:decomposition}, \eqref{eq:psinegl} and \eqref{eq:supfinito}.
\end{proof}

\subsection{The lower bound}\label{subsec:lowerBound}
Following the previous section, we now present a lower bound on the second moment \eqref{eq:defPhi}.
\begin{lemma}
[Lower bound] \label{prop:lowerBound}
Let $0\leq \zeta\leq \xi\leq 1$ and recall \eqref{eq:Al}. Then, for all $\hat{\beta}\in [0, z_a)$,
    \begin{equation}\label{eq:lowerBound}
        \liminf_{\delta\to 0}\, \liminf_{N\to\infty}\inf_{x_0\in [N^{\zeta - \delta},N^{\zeta+\delta}]\cap \Z^2_{even}} \Phi_{N^\xi}(x_0)
        \geq \frac{\tilde J_{\alpha}\Big( \hat{\beta}\zeta^{\frac{a+2}{2}}\Big)}{\tilde J_{\alpha}\Big( \hat{\beta}\xi^{\frac{a+2}{2}}\Big)}\,.
    \end{equation}    
\end{lemma}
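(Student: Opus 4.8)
\textbf{Proof plan for Lemma \ref{prop:lowerBound}.}

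The plan is to mirror the chaos expansion \eqref{eq:second_moment_chaos} used in the upper bound, but now to retain \emph{enough} terms to match the series defining $\tilde J_\alpha$ from below, rather than discarding ``bad'' indices. Starting from \eqref{eq:second_moment_chaos} with $\sigma_N^2 = \beta_N^2$, I would first restrict the sum over the time increments $n_i - n_{i-1}$ to a sub-range that still captures the bulk of the mass: concretely, fix the multi-scale parameter $M$ and restrict each $x_i$ to lie in an annulus $A_{l_i}$ with $l_i$ ``good'' in the sense of \eqref{eq:defCk} (i.e.\ well-separated scales bounded by $\xi M-2$), and restrict the first scale $l_1$ to be $> l_0 + 1$ so that the starting point $x_0 \in A_{l_0}$ sits at a strictly smaller scale. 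On this restricted event one has a genuine \emph{lower} bound $R_{N^\xi}(x_i - x_{i-1}) \geq (\xi - \tfrac{l_i}{M}\vee\tfrac{l_{i-1}}{M} - \tfrac{3}{M})\tfrac{\log N}{\pi}$ from \eqref{eq:order_of_RNx_bis} of Lemma \ref{lem:order_of_RNx}, together with the lower bound in Lemma \ref{lem:bound_h} for $\sum_{x\in A_{l_i}} h(x)$.

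Next I would carry out the same Riemann-sum analysis as in the upper bound, but in the reverse direction: after substituting the bounds above into the truncated series, each term becomes
$\bigl((1-\varepsilon)\tfrac{a+2}{2}\hat\beta\bigr)^{2k}$ times a discrete integral $I_{k,\zeta,\xi,M}$ of the form \eqref{eq:discrintegral} with the error terms $+\tfrac{3}{M}$ replaced by $-\tfrac{3}{M}$. By monotonicity (now bounding the discrete sum \emph{below} by an integral over a slightly shrunk domain $[\tfrac{l_0}{M}+\tfrac{c}{M},\,\xi-\tfrac{c}{M}]^k$), this converges as $N\to\infty$ to $z^{2k} f_k(s_0)$ in the notation of Section \ref{sec:recursive_formula}, with $z := (1-\varepsilon)\tfrac{a+2}{2}\hat\beta(\xi - \tfrac{c}{M})^{\frac{a+2}{2}}$ and $s_0 = \tfrac{l_0/M}{\xi - c/M}$. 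Summing over $k$ and invoking Theorem \ref{lem:recursiveLemmaFormula} gives, for each fixed $M$,
\[
\liminf_{N\to\infty}\inf_{x_0\in A_{l_0}} \Phi_{N^\xi}(x_0) \;\geq\; \frac{\tilde J_\alpha\bigl(\tfrac{2}{a+2}\,z\,s_0^{\frac{a+2}{2}}\bigr)}{\tilde J_\alpha\bigl(\tfrac{2}{a+2}\,z\bigr)} \;=\; \frac{\tilde J_\alpha\bigl((1-\varepsilon)\hat\beta(\tfrac{l_0}{M})^{\frac{a+2}{2}}\bigr)}{\tilde J_\alpha\bigl((1-\varepsilon)\hat\beta(\xi - \tfrac{c}{M})^{\frac{a+2}{2}}\bigr)},
\]
valid provided $z < (a+2)z_a/2$, which holds for $M$ large and $\varepsilon$ small since $\hat\beta < z_a$. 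Finally, given $x_0 \in [N^{\zeta-\delta},N^{\zeta+\delta}]\cap\Z^2_{even}$, one has $x_0 \in A_{l_0}$ with $\tfrac{l_0}{M} \in [\zeta - \delta - \tfrac{1}{M},\, \zeta + \delta]$, so letting $N\to\infty$, then $\delta\to 0$, then $M\to\infty$, then $\varepsilon\to 0$ and using continuity of $\tilde J_\alpha$ yields \eqref{eq:lowerBound}.

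The main obstacle I anticipate is controlling the \emph{truncation error}: discarding the ``bad'' scales and the short/long time increments is harmless for an \emph{upper} bound, but for a lower bound one must check that the retained ``good'' sub-sum still exhausts the full series in the $N\to\infty$ then $M\to\infty$ limit — i.e.\ that the neglected contributions do not carry a positive fraction of the mass. This should follow from the fact (already implicit in the upper-bound analysis, cf.\ \eqref{eq:psinegl} and \eqref{eq:boundpsineglY}) that the ``bad'' contributions are $O(M^{-(1+a)\wedge 1})$ relative to the ``good'' ones, so that $\Psi^{lead}$ and the full $\Psi$ share the same $M\to\infty$ limit; one re-runs that estimate but keeping track of the constant in front of the leading series, so that the lower and upper limits pinch. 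A secondary technical point is the lower bound on $R_{N^\xi}(x_1 - x_0)$ when $x_0 \in A_{l_0}$ with $l_0$ itself possibly close to $\xi M$: here one restricts to $l_0 \leq (\zeta + \delta)M < \xi M$ (legitimate since $\zeta < \xi$, or handles the boundary case $\zeta = \xi$ trivially as the ratio is then $1$), ensuring the scale separation $l_1 > l_0 + 1$ needed to apply \eqref{eq:order_of_RNx_bis}.
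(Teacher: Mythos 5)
Your plan has the right high-level structure and cites the right ingredients (the chaos expansion, the lower-bound estimates from Lemmas \ref{lem:order_of_RNx} and \ref{lem:bound_h}, and the Riemann-sum/Bessel identity from Theorem \ref{lem:recursiveLemmaFormula}), but it leaves a genuine gap at the one step that distinguishes the lower bound from the upper bound: the handling of the time-ordering constraint. In the expansion \eqref{eq:second_moment_chaos} the variables $u_i = n_i - n_{i-1}$ satisfy $u_i \geq 1$ \emph{and} $\sum_{i\leq k} u_i \leq N^\xi$, not $u_i \leq N^\xi$ independently. Replacing the joint constraint by a product over $u_i\in\{1,\dots,N^\xi\}$, i.e.\ replacing the time sum by $\prod_i R_{N^\xi}(x_i-x_{i-1})$, is valid only as an \emph{upper} bound. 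Your first sentence announces a restriction ``of the time increments to a sub-range'', but the rest of the plan only restricts the spatial scales $l_i$ and then writes $R_{N^\xi}(x_i-x_{i-1})$ as if the product form were already a lower bound. That step is unjustified. The paper closes this gap with a small but essential device: truncate the chaos order at a finite $K$ \emph{and} restrict each increment to $u_i \leq N^{\xi'}$ with $\xi' = \xi - \log K/\log N$ (so $N^{\xi'} = N^\xi/K$). For $k\leq K$ this forces $\sum_i u_i \leq N^\xi$, so the restricted box-sum is legitimately a subset of the time-ordered sum, and one obtains the product $\prod_i R_{N^{\xi'}}(x_i-x_{i-1})$ from below. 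Since $\xi'\to\xi$ as $N\to\infty$, the extra $K$ does not affect the final limit, and one then sends $K\to\infty$ at the end. Without this (or an equivalent argument that the off-constraint contribution is negligible), your ``lower bound by a product of $R$'s'' is not a lower bound.

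Two secondary comments. First, you impose the separation $|l_i-l_{i-1}|>1$ throughout; the paper does not, it instead absorbs pairs at adjacent scales into a slightly larger error ($4/M$ instead of $3/M$) in the $R$ estimate. Both routes land on the same Riemann integral after $M\to\infty$, so this choice is harmless, just a bit more restrictive than necessary. Second, your concern about ``controlling the truncation error'' is somewhat misdirected for a lower bound: discarding configurations always preserves a lower bound, so there is nothing to control in that direction. What one must check is simply that the \emph{retained} series converges to $\tilde J_{\alpha}(\hat\beta\zeta^{(a+2)/2})/\tilde J_{\alpha}(\hat\beta\xi^{(a+2)/2})$ in the stated iterated limits ($N\to\infty$, then $M\to\infty$, then $K\to\infty$, then $\varepsilon\to 0$, then $\delta\to 0$), and that is exactly what the Riemann-sum computation and Theorem \ref{lem:recursiveLemmaFormula} deliver; no further ``pinching'' argument against the full series is needed.
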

\begin{proof}
By  \eqref{eq:second_moment_chaos}  and the change of variables $u_i = n_{i}-n_{i-1}$ for $i\leq k$, we have, for all $K\in \mathbb N$,
\begin{equation*}
\begin{split}
    \Phi_{N^{\xi}}(x_0)&= 1+\sum_{k=1}^\infty \sigma_N^{2k} \sum_{\substack{\sum_{i=1}^k u_i \le N^\xi \\ u_i\geq 1, \,i=1,\dots,k}} \, \sum_{x_1,\ldots,x_k \in \Z^2_{\text{even}}} \prod_{i=1}^k p_{2u_i}(x_i-x_{i-1}) \, h(x_i)\\
    &\geq 1+ \sum_{k=1}^K \sigma_N^{2k} \sum_{\substack{\sum_{i=1}^k u_i \le N^\xi \\ u_i\geq 1,\,i=1,\dots,k}} \, \sum_{\substack{x_1,\ldots,x_k \in \Z^2_{\text{even}} \\ |x_i| \le N^{\frac{\xi}{2}(1 + \frac{1}{M})}}} \prod_{i=1}^k p_{2u_i}(x_i-x_{i-1}) \, h(x_i).
\end{split}
\end{equation*}
Observe that $u_i \leq N^{\xi}/K$  for all $i\leq k\leq K$ implies $\sum_{i\leq k} u_i \leq N^{\xi}$, hence for $\xi' \coloneqq \xi - \frac{\log K}{\log N}$,
\begin{align*}
\Psi_{N^{\xi}}(x_0) & \ge 1+ \sum_{k=1}^K \sigma_N^{2k} \sum_{\substack{ 1 \le u_i \le N^{\xi'} \\ i=1,\ldots,k}}\sum_{\substack{x_1,\ldots,x_k \in \Z^2_{\text{even}} \\ |x_i| \le N^{\frac{\xi}{2}(1 + \frac{1}{M})}}} \prod_{i=1}^k p_{2u_i}(x_i-x_{i-1}) \, h(x_i)\,\\
& = 1+\sum_{k=1}^K \sigma_N^{2k}  \, \sum_{\substack{x_1,\ldots,x_k \in \Z^2_{\text{even}} \\ |x_i| \le N^{\frac{\xi}{2}(1 + \frac{1}{M})}}} \prod_{i=1}^k R_{N^{\xi'}}(x_i-x_{i-1}) \, h(x_i).
\end{align*}
Let $M>0$ and recall $A_l$ in \eqref{eq:Al}. By Lemma \ref{lem:order_of_RNx} (first line of \eqref{eq:order_of_RNx_bis}), for $N \ge N_1=N_1(M)$ and $l_0=M\zeta$ or $l_0 = M\zeta - 1$, taking $M$ large enough so that $1/M\leq \delta$, we can further bound the last expression from below by
\begin{equation*}
    \begin{split}
        &\sum_{k=1}^K \sigma_N^{2k} \sum_{\substack{ 1\leq l_1,\ldots,l_k \le \lfloor \xi M \rfloor -1}} \, \sum_{ \substack{x_i \in A_{l_i} \\ i=1,\ldots,k}} \prod_{i=1}^k R_{N^{\xi'}}(x_i-x_{i-1}) \, h(x_i)\\
        & \ge \sum_{k=1}^K \sigma_N^{2k} \sum_{\substack{ 1\leq l_1,\ldots,l_k \le \lfloor \xi M \rfloor -1}} \, \sum_{ \substack{x_i \in A_{l_i} \\ i=1,\ldots,k}} \prod_{i=1}^k \bigg\lbrace \bigg( \xi' - \frac{l_i}{M} \vee \frac{l_{i-1}}{M} - \frac{4}{M} \bigg) \frac{\log N}{\pi } \bigg\rbrace \, h(x_i) \,.
    \end{split}
\end{equation*}
For any arbitrary $\varepsilon >0$, we obtain by Lemma \ref{lem:bound_h} that 
\begin{equation*}
\Phi_{N^{\xi}}(l_0) \geq \sum_{k=0}^K \bigg(\frac{\sigma_N^{2}   \, (1-\varepsilon) ( \log N )^{a+2}}{2M} \bigg)^k \sum_{\substack{ 1\leq l_1,\ldots,l_k \le \lfloor \xi M \rfloor -1}} \prod_{i=1}^k \bigg( \xi' - \frac{l_i}{M} \vee \frac{l_{i-1}}{M} - \frac{4}{M} \bigg) \bigg( \frac{l_i}{2M} \bigg)^a.
\end{equation*} 
Recalling \eqref{eq:beta_N} and setting $\sigma_N^2=\beta_N^2$, first send $N \to \infty$ (note that $\xi'\to \xi$ as $N\to\infty$ and that all the sums in the last display are finite), then $M \to \infty$ and $K \to \infty$ in this order, we find
\begin{equation*}
\begin{split}
    & 
    \liminf_{M\to\infty}\liminf_{N \to \infty} \inf_{\substack{x_0 \in A_{M\zeta}}} \mathbb{E} \big[ \Theta_\xi(x_0)^2\big]\\
    & \ge 1+ \sum_{k=1}^\infty (\bar{C}\,\hat \beta)^{2k}\int_{\substack{t_1,\dots, t_k\in [0,\xi] \\ t_0=\zeta}}\prod_{i=1}^k(\xi-t_i\vee t_{i-1})\,t_i^a \,\text{d}t_1\dots \text{d}t_k \\ \label{eq:upper_bound_estimate}
    &=1+\sum_{k=1}^\infty (\bar{C}\,\hat \beta)^{2k}\xi^{(2+a)k}\int_{\substack{s_1\dots s_k\in[0,1]\\ s_0=\zeta/\xi}}\prod_{i=1}^k(1-s_i\vee s_{i-1})\,s_i^a \, \text{d}s_1 \dots \text{d}s_k\,.
\end{split}
\end{equation*}
We then conclude similarly to the upper bound.
\end{proof}

\subsection{Proof of Theorem \ref{thm:main}} \label{subsec:proofOf2ndMoment}
The estimates \eqref{eq:secondMomentFlat} and \eqref{eq:2ndMomentSpaceVersion} follow from Lemma \ref{lem:second_moment}, Lemma \ref{thm:upper_bound} and Lemma \ref{prop:lowerBound}. Equation \eqref{eq:allsup2ndMoment} follows similarly by \eqref{eq:finiteness2ndMoment}.

It remains to prove \eqref{eq:2ndMomentZetaXilimit}, that we deduce from \eqref{eq:2ndMomentSpaceVersion}.  Let $\delta > 0$ and $\varepsilon>0$ that will be taken small compared to $\delta$.
Let us define $A_{\delta}:=\{z\in \mathbb{Z}_{\text{even}}^2: |z|\in[N^{(\zeta-\delta)/2},N^{(\zeta +\delta)/2}]\}$. 
By \eqref{eq:2ndMomentSpaceVersion}, it is enough to compare $\IE[\Theta^2_{\zeta,\xi}]$ with $\E[\Theta_{\xi}(x_0)\Theta_{\xi}(0)]$ for $x_0\in A_{\delta}$.
By \eqref{eq:second_moment} and the Markov Property, we have
\begin{align*}
    \E[\Theta_{\zeta,\xi}^2]&=\ E\left[e^{\beta_N^2\sum_{n=N^{\zeta}+1}^{N^\xi}   h(S_{2n})}\right]=\sum_{y\in\mathbb{Z}_{\text{even}}^2}p_{N^{\zeta}}(y) E_y\left[e^{ \beta_N^2\sum_{n=1}^{N^\xi-N^\zeta}  h(S_{2n})}\right] \\
    &\geq \left(\sum_{y\in A_{\delta}} p_{N^\zeta}(y) \right) \left(\inf_{y\in A_{\delta}}E_y \left[e^{ \beta_N^2\sum_{n=1}^{N^\xi-N^\zeta}  h(S_{2n})}\right]\right).
\end{align*}
By the central limit theorem applied to $S_N$, for $N\geq N_1(\delta,\varepsilon)$ large enough we have that $\sum_{y\in A_{\delta}} p_{N^\zeta}(y) \geq 1-\varepsilon$ since for all $\delta>0$,
\begin{equation*}
    \sum_{y\in A_{\delta}} p_{N^\zeta}(y) = P\left(\frac{|S_{N^\zeta}|}{N^{\zeta/2}}\in [N^{-\frac{\delta}{2}},N^{\frac{\delta}{2}}]\right) \xrightarrow[N\to\infty]{} 1\,.
\end{equation*}
Moreover, by \eqref{eq:2ndMomentSpaceVersion} and since $\zeta<\xi$, $N^\xi -N^\zeta=N^\xi(1+o(1))$, for  $\delta \leq \delta_1(\varepsilon)$ small enough and $N\geq N_2(\delta,\varepsilon)$ large enough,
\begin{equation*}
    \inf_{y\in A_{\delta}}E_y \left[e^{ \beta_N^2\sum_{n=1}^{N^\xi-N^\zeta}  h(S_{2n})}\right]\geq \frac{\tilde J_{\alpha}\Big( \hat{\beta}\zeta^{\frac{a+2}{2}}\Big)}{\tilde J_{\alpha}\Big( \hat{\beta}\xi^{\frac{a+2}{2}}\Big)}(1-\varepsilon),
\end{equation*}
thus yielding $\liminf_{N\to\infty} \E[\Theta_{\zeta,\xi}^2] \geq \tilde J_{\alpha}\Big( \hat{\beta}\zeta^{\frac{a+2}{2}}\Big){\tilde J_{\alpha}\Big( \hat{\beta}\xi^{\frac{a+2}{2}}\Big)}^{-1}$.

On the other hand, for any $\tilde{x}$ such that $N^{\zeta/2-\delta/2}\leq |\tilde{x}| \leq N^{\zeta/2 - \delta/4}$, we have
\begin{align} \nonumber
\sup_{x_0\in A_\delta}\E[\Theta_{\xi}(x_0)\Theta_{\xi}(0)]&\geq E_{\tilde{x}}\left[e^{\beta_N^2\sum_{n=1}^{N^\xi}   h(S_{2n})}\right]
\geq E_{\tilde{x}}\left[e^{\beta_N^2\sum_{n=N^{\zeta}+1}^{N^\xi}   h(S_{2n})}\right] \\  \label{eq:est_1} &=\sum_{y\in\mathbb{Z}^2}p_{N^\zeta}(y-\tilde{x})E_y\left[e^{ \beta_N^2\sum_{n=1}^{N^\xi-N^\zeta}  h(S_{2n})}\right],
\end{align}
where the last equality holds for the Markov Property.
Moreover, for all $\delta \leq \delta_2(\varepsilon)$, $N\geq N_3(\delta,\varepsilon)$ and $y\in A_{\delta/4}$, i.e.\ $|y|\in[N^{\zeta/2-\delta/8},N^{\zeta/2+\delta/8}]$, by the Cauchy-Schwartz inequality and the Local Limit Theorem,
\begin{align}\nonumber
    p_{N^\zeta}(y-\tilde{x})&\geq \frac{4}{N^\zeta}\frac{e^{-\frac{|y-\tilde{x}|^2}{N^\zeta}}}{2\pi}(1-\varepsilon)\geq \frac{4}{N^\zeta}\frac{e^{-\frac{|y|^2+2|y||\tilde{x}|+|\tilde{x}|^2}{N^\zeta}}}{2\pi}(1-\varepsilon)\\ \nonumber
    &\geq p_{N^\zeta}(y)e^{-(2N^{-\delta/8}+N^{-\delta/2})}(1-\varepsilon)\geq p_{N^\zeta}(y)(1-\varepsilon)^2,
\end{align}
hence \eqref{eq:est_1} yields
\begin{equation}\label{eq:est_2}
    \sup_{x_0\in A_\delta} \mathbb{E}[\Theta_\xi(x_0)\Theta_\xi(0)]\geq \sum_{y\in A_{\delta/8}} p_{N^\zeta}(y)(1-\varepsilon)^2 E_y\left[e^{ \beta_N^2\sum_{n=1}^{N^\xi-N^\zeta}  h(S_{2n})}\right].
\end{equation}
Now, for all $\varepsilon,\delta>0$, with $\delta$ small compared to $\varepsilon$ and $N\geq N_4(\delta,\varepsilon)$, the quantity $E_y\left[e^{ \beta_N^2\sum_{n=1}^{N^\xi-N^\zeta}  h(S_{2n})}\right]$ is bounded from above (cf.\@ \eqref{eq:allsup2ndMoment} and \eqref{eq:second_moment}) and $P(S_{N^\zeta}\not \in A_{\delta/4})\leq \varepsilon$ by the central limit theorem, which plugged into \eqref{eq:est_2} yields
\begin{align} \nonumber
  \sup_{x_0\in A_\delta} \mathbb{E}[\Theta_\xi(x_0)\Theta_\xi(0)] &\geq \sum_{y\in A_{\delta/8}} p_{N^\zeta}(y)(1-\varepsilon)^2 E_y\left[e^{ \beta_N^2\sum_{n=1}^{N^\xi-N^\zeta}  h(S_{2n})}\right]\\ &\nonumber
   \geq \sum_{y\in \mathbb{Z}^2} p_{N^\zeta}(y)(1-\varepsilon)^2 E_y\left[e^{ \beta_N^2\sum_{n=1}^{N^\xi-N^\zeta}  h(S_{2n})}\right] - c(1-\varepsilon)^3\varepsilon\\ \label{eq:est_3}
   &= \IE [\Theta_{\zeta,\xi}^2](1-\varepsilon)^2-c(1-\varepsilon)^3\varepsilon,
\end{align}
where the last equality is due to the Markov Property.
Finally, by \eqref{eq:est_3}, \eqref{eq:2ndMomentSpaceVersion} and the arbitrariness of $\varepsilon$, we conclude that $\limsup_{N\to\infty}\IE[\Theta_{\zeta,\xi}^2]\leq \tilde J_{\alpha}\Big( \hat{\beta}\zeta^{\frac{a+2}{2}}\Big){\tilde J_{\alpha}\Big( \hat{\beta}\xi^{\frac{a+2}{2}}\Big)}^{-1} $.

\qed
\begin{remark}
We note that the same argument yields that, for all $\hat \beta < z_a$,
    \begin{equation} \label{eq:linearTime}
    \forall \, 0\leq \zeta<\xi\leq 1,\quad \lim_{N\to\infty} E\left[\prod_{n=N^{\zeta}+1}^{N^{\xi}} \{ 1+\beta_N^2 h(S_{2n})\}\right] = \frac{\tilde J_{\alpha}\left( \hat{\beta}\zeta^{\frac{a+2}{2}}\right)}{\tilde J_{\alpha}\left( \hat{\beta}\,\xi^{\frac{a+2}{2}}\right)}\,.
    \end{equation}
\end{remark}

\subsection{Proof of Theorem \ref{th:ErdosTaylor}}\label{subsec:ErdosTaylorLaplace}
By \cite[Section 8]{Kent80}, the moment generating function of $\tau_{0,1}^{\alpha}$ satisfies $E[e^{t\cdot\tau_{0,1}^\alpha}] = \tilde{J}_\alpha(\sqrt{2t})^{-1}$ for all $t\in[0,z_a^2/2]$, with $z_a$ as in \eqref{eq:defza}. (See also \cite{Kent78} for similar expressions regarding the Laplace transform ($t\leq 0$).) As convergence of moment generating functions on an open interval implies convergence in distribution, the second equality in \eqref{eq:secondMomentFlat} yields the claim.

\section{The decoupling argument}\label{sec:decoupling}

This section is dedicated to proving point \ref{item:decoupling} in Theorem \ref{thm:general}, adapting the proof of Theorem 2.2 in \cite{CoscoDonadini}.
We recall that $Z_N:=W_N$ in Theorem \ref{thm:general} and that $Z_{k,M,N}$ is defined in \eqref{eq:defZkMN}. 
\begin{theorem}[Dyadic time decoupling]
  \label{th:step1}
For all integers $M>0$ we have 
\begin{equation}\label{eq:prop_step1}
W_N - \prod_{k=1}^{M} Z_{k,M,N} \xrightarrow[N \to \infty]{L^2} 0\,.
\end{equation}
\end{theorem}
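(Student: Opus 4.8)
## Proof plan for Theorem \ref{th:step1}

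The plan is to expand both $W_N$ and the product $\prod_{k=1}^M Z_{k,M,N}$ as polynomial chaos expansions in the Gaussian environment $\omega$, and to show that the two expansions share the same ``dominant'' part while the remainders vanish in $L^2$. Recall that $W_N = E[\prod_{n=1}^N(1+\beta_N\omega(n,S_n))\cdot e^{-\frac{\beta_N^2}{2}N + \text{corrections}}]$; more precisely, since the $\omega(n,x)$ are Gaussian, it is cleaner to work directly with $W_N = E[e^{\sum_{n=1}^N\{\beta_N\omega(n,S_n)-\beta_N^2/2\}}]$ and with $Z_{k,M,N} = E[\prod_{n=t_{k-1}+1}^{t_k}(1+\beta_N\omega(n,S_n))]$. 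First I would write $W_N = E[\prod_{n=1}^N(1+\beta_N\omega(n,S_n))] + (\text{error from replacing } e^{\beta_N\omega - \beta_N^2/2} \text{ by } 1+\beta_N\omega)$, controlling the error in $L^2$ using the second-moment bounds of Theorem \ref{thm:main} (in particular \eqref{eq:allsup2ndMoment} and the Taylor comparison in Lemma \ref{lem:second_moment}); the point is that the difference between $e^{\beta_N\omega-\beta_N^2/2}$ and $1+\beta_N\omega$ is $O(\beta_N^2\omega^2)$ per site, and summing the resulting contributions gives something of order $\beta_N^2 \cdot (\text{a convergent second moment}) \to 0$ because $\beta_N \to 0$. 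So it suffices to prove \eqref{eq:prop_step1} with $W_N$ replaced by $\widehat W_N := E[\prod_{n=1}^N(1+\beta_N\omega(n,S_n))]$.

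Next, both $\widehat W_N$ and $\prod_k Z_{k,M,N}$ are polynomial chaos in the independent Gaussians $\{\omega(n,x)\}$, and by expanding the products we get, schematically, $\widehat W_N = \sum_{I\subset\{1,\dots,N\}} \beta_N^{|I|} \psi_N(I)$ and $\prod_k Z_{k,M,N} = \sum_{I\subset\{1,\dots,N\}} \beta_N^{|I|}\widetilde\psi_N(I)$, where $\psi_N(I)$ is an expectation over the simple random walk of a product of transition kernels through the time-points of $I$, and $\widetilde\psi_N(I)$ is the analogous object but with the random walk ``reset'' (re-averaged over its endpoint) at each of the boundary times $t_1,\dots,t_{M-1}$. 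Crucially, $\psi_N(I)=\widetilde\psi_N(I)$ whenever $I$ contains no index in the ``interface windows'' — i.e. whenever $I$ places at least... no: the two agree on every $I$ that has no point near each interface $t_j$; the difference $\psi_N(I)-\widetilde\psi_N(I)$ is nonzero only when some consecutive pair of points of $I$ straddles an interface $t_j$ (or $I$ has a point only on one side). Because of independence of the $\omega$'s across chaos modes, the $L^2$ norm of $\widehat W_N - \prod_k Z_{k,M,N}$ is $\sum_{I} \beta_N^{2|I|}\,\mathbb E[(\psi_N(I)-\widetilde\psi_N(I))^2]$ times combinatorial weights, and I would bound this by the sum over chaoses that ``cross at least one interface''.

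The key estimate, then, is that the crossing contribution is small. For a pair of consecutive points $n_{i-1} < t_j < n_i$ straddling interface $t_j$, one replaces the true kernel $p_{2(n_i - n_{i-1})}(x_i - x_{i-1})$ by $p_{2(t_j - n_{i-1})}p_{2(n_i - t_j)}$ convolved against... actually the ``reset'' replaces the walk's position at $t_j$ by a fresh average, so the relevant quantity is a difference of two-point functions that, after summing over the spatial variables, produces a factor like $R_{t_j - n_{i-1}}$ or $R_{n_i - t_j}$ that is $O(\log N / M)$ smaller than a full $R_N \sim \frac{\log N}{\pi}$ would be — more precisely, the time gap on one side of $t_j$ is constrained, costing a factor $1/M$ relative to the unconstrained bound. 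Summing the full chaos expansion with one such constrained factor, and using that the unconstrained pieces resum to the finite quantity of Theorem \ref{thm:main} / Lemma \ref{thm:upper_bound} \eqref{eq:finiteness2ndMoment}, gives a bound of order $M/M = O(1)$... I need the gain to beat the number $M$ of interfaces, so the honest bound should be: each interface contributes a factor $O(1/M)$ (or better, summable in a telescoping sense), hence the total is $O(M \cdot \frac{1}{M})$ — not obviously going to zero in $N$. The resolution, following \cite{CoscoDonadini}, is that for fixed $M$ one does not need smallness in $M$: one shows that for each \emph{fixed} $M$, the crossing contribution $\to 0$ as $N\to\infty$, because the constrained time-sum near an interface $t_j = N^{j/M}$ is genuinely $o(\log N)$ — the walk must make one step within a window whose logarithmic size is $o(\log N)$ after the limit, since the gap $[t_j, t_j(1+\epsilon)]$ contributes only $\epsilon \log N / \pi$ and one then sends $\epsilon \to 0$. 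So the statement \eqref{eq:prop_step1} is ``for each fixed $M$'', which is exactly what condition \ref{item:decoupling} of Theorem \ref{thm:general} requires.

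I would organize the write-up as: (1) reduce $W_N$ to $\widehat W_N$ via the exponential-vs-linear Taylor estimate and Theorem \ref{thm:main}; (2) write the two chaos expansions and identify $\widehat W_N - \prod_k Z_{k,M,N} = \sum_{j=1}^{M-1}(\text{terms whose chaos set crosses } t_j \text{ but not } t_1,\dots,t_{j-1})$, a telescoping-type decomposition so that the error is a sum of $M-1$ terms each of which isolates a single interface crossing; (3) for a single such term, bound the $L^2$ norm by a chaos sum in which exactly one kernel factor is replaced by its constrained/difference version supported on a time window of the form $[N^{(j-\delta)/M}, N^{(j+\delta)/M}]$, estimate that constrained factor using Lemma \ref{lem:order_of_RNx} (it is $O(\delta \log N)$), and resum the remaining unconstrained chaos using \eqref{eq:finiteness2ndMoment} to conclude the term is $O(\delta)$ as $N\to\infty$; (4) send $\delta \to 0$. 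The main obstacle is step (3): getting a clean chaos-level identity for $\psi_N(I) - \widetilde\psi_N(I)$ and making sure the spatial sums factor appropriately so that one genuinely extracts a single small (constrained-time) factor while everything else resums — this is where the independence-in-time of the environment and the precise annulus estimates of Lemma \ref{lem:order_of_RNx} and Lemma \ref{lem:bound_h} do the work, and where one must be careful that the ``reset'' operation at $t_j$ does not interact badly with chaos points far from $t_j$.
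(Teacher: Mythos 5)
Your step (1) — reducing $W_N$ to the linear-weight partition function $\widehat W_N$ — is exactly the paper's first move (Appendix~\ref{sec:approximation}), and your high-level picture that the discrepancy lives on chaos terms whose consecutive time-indices straddle a block interface is correct. From there, though, you go a genuinely different route: you attempt a direct chaos-level bookkeeping of the $L^2$ difference, decomposed by which interface is first crossed. The paper instead telescopes using the partial partition functions $\tilde Z_k = E[\prod_{n=t_k+1}^{N}\{1+\beta_N\omega(n,S_n)\}]$, writing $Y_N - \prod_k Z_k = (Y_N - Z_1\tilde Z_1) + Z_1(\tilde Z_1 - Z_2\tilde Z_2) + \cdots$, and reduces everything to proving $Y_N - Z_1\tilde Z_1 \to 0$ in $L^2$ for a \emph{single} interface. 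That single step is then handled not at the chaos level but by a second-moment computation: after expanding $\IE[(Y_N - Z_1\tilde Z_1)^2]$, the diagonal terms are already known by Theorem~\ref{thm:main}, and the cross term $\IE[Y_N Z_1\tilde Z_1]$ is lower-bounded using Markov's property plus the restriction that $S_{t_1}^1 - S_{t_1}^3$ lies in a diffusive annulus $A_N$.

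This brings out the genuine gap in your step (3). You attribute the smallness of a crossing contribution to a ``constrained time window'' $[N^{(j-\delta)/M}, N^{(j+\delta)/M}]$ of logarithmic size $\delta\log N$, but no such time constraint appears: a chaos term crosses $t_j$ as soon as one pair of consecutive indices straddles $t_j$, and those indices can sit anywhere in their respective blocks. The real mechanism is \emph{spatial}, not temporal: at the interface $t_j$, the walk's position is already diffuse at scale $N^{j/2M}$, so replacing the ``continued'' kernel $p_{n_i - n_{i-1}}(x_i - x_{i-1})$ by the ``reset'' kernel $p_{n_i}(x_i)$ changes little. This is precisely what the paper's restriction to the annulus $A_N = \{\rho\sqrt{t_1}\le |x|\le \rho^{-1}\sqrt{t_1}\}$ captures, via the CLT and the spatial second-moment bounds \eqref{eq:2ndMomentSpaceVersion} and \eqref{eq:lowerBound} of Lemmas~\ref{thm:upper_bound} and~\ref{prop:lowerBound}. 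Without replacing your time-window heuristic by a diffusive-position argument of this kind, step (3) does not close; and separately, your claim that the $\omega(n,x)$ are \emph{independent} Gaussians is inaccurate (they are only time-independent; spatial correlation is $h$), so the $L^2$ identity you write must be read as orthogonality across chaos \emph{degree} and time-index sets, with spatial covariances retained within each term.
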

\begin{remark}
We also refer to \cite[Lemma 6.2 and eq.\ (6.3)]{caravenna3} and \cite{CZ23} where similar decompositions have already appeared.
\end{remark}

\begin{proof}
By Appendix \ref{sec:approximation}, the linear approximation 
\[
Y_N:= E\left[\prod_{i=1}^N \{1+\beta_N \omega(n,S_n)\}\right],
\]
satisfies $ W_N-Y_N \xrightarrow[N \to \infty]{L^2} 0$. Hence, it is enough to show that
 $Y_N - \prod_{k=1}^{M} Z_{k,M,N} \xrightarrow[N \to \infty]{L^2} 0$. 
Recall that $t_k = \lceil N^{\frac{k}{M}}\rceil$. For simplicity we write $Z_k=Z_{k,M,N}$.  
Fix $M>0$ and consider for all $k\leq M$:
\begin{equation*}
    \tilde{Z}_k= E\left[\prod_{n=t_{k}+1}^{N} \left\{ 1+\beta_N\omega(n,S_n) \right\}\right].
\end{equation*}
Note in particular that $ Z_{k} \indep \tilde{Z}_k$ (recall \eqref{eq:defZkMN}).
We first prove that 
as $N\to\infty$, 
\begin{equation}\label{eq:L2_Z0}
  Y_N-Z_1\tilde{Z}_1\cvLdeux 0.
\end{equation}
Since $\IE[|Y_N-Z_1\tilde{Z}_1|^2]=\IE[Y_N^2]-2\IE[Y_NZ_1\tilde{Z}_1]+\IE[Z_1^2\tilde{Z}_1^2]$, by using
\eqref{eq:linearTime},
\begin{equation}
\lim_{N\to\infty }\IE[Y_N^2] = \frac{1}{\tilde J_{\alpha}\Big( \hat{\beta}\Big)} \label{eq:conv_WN_step1}\quad \text{and} \quad \lim_{N\to\infty } \IE[Z_1^2\tilde{Z}_1^2]=
\frac{1}{\tilde J_{\alpha}\Big( \frac{\hat{\beta}}{M^{\frac{a+2}{2}}}\Big)}\frac{\tilde J_{\alpha}\Big( \frac{\hat{\beta}}{M^{\frac{a+2}{2}}}\Big)}{\tilde J_{\alpha}\Big( \hat{\beta}\Big)}=\frac{1}{\tilde J_{\alpha}\Big( \hat{\beta}\Big)}.
\end{equation}
Thus, it suffices to prove that 
\begin{equation}\label{eq:star}
  \liminf_{N\to\infty}\IE[Y_NZ_1\tilde{Z}_1]\geq  \frac{1}{\tilde J_{\alpha}\Big( \hat{\beta}\Big)}.
\end{equation}

Considering $S^1,S^2$ and $S^3$ three independent copies of $S$, we set $l_n^{1,j}:= \beta_N^2 h(S_n^1-S_n^j)$ and $l_{2n}:=\beta_N^2h(S_{2n})$. We see that:
\begin{equation*}
  \begin{split}
   \IE[Y_NZ_1\tilde{Z}_1]&=E^{\otimes 3}\left[ \prod_{n=1}^{t_1}\{1+l_n^{1,2}\} \prod_{n=t_1+1}^N\{1+l_n^{1,3}\}\right]\\
   &= E^{\otimes 3}\left[\prod_{n=1}^{t_1}\{1+l_n^{1,2}\} \,E_{S_{t_1}^1,S_{t_1}^3}^{\otimes 2}\left[ \prod_{n=t_1+1}^N\{1+ l_n^{1,3}\}\right]\right] \\
   &\geq E^{\otimes 3}\left[ \prod_{n=1}^{t_1}\{1+l_n^{1,2}\} \mathds{1}_{S_{t_1}^1-S_{t_1}^3\in A_N} \, E_{S_{t_1}^1-S_{t_1}^3}\left[\prod_{n=1}^{N-t_1}\{1+l_{2n}\}\right]\right],
 \end{split}
\end{equation*}
where we have used Markov’s property in the second line, and where we have set
\[
A_N=\{x\in\mathbb{Z}^2_{even} : \rho \sqrt{t_1} \leq |x| \leq \rho^{-1}\sqrt{t_1}\}\,,
\]
with $\rho>0$. Hence,
we obtain that
$\IE[Y_N Z_1 \tilde{Z}_1]\geq \psi_N\varphi_N$,
where
\begin{equation*}
\psi_N=E^{\otimes 3}\left[ \prod_{n=1}^{t_1}\{1+l_n^{1,2}\}\mathds{1}_{S_{t_1}^1-S_{t_1}^3\in A_N}\right] \qquad \text{and} \qquad \varphi_N=\inf_{x\in A_N}E_x\left[\prod_{n=1}^{N-t_1}\{1+l_{2n}\}\right].
\end{equation*}
Therefore, the inequality \eqref{eq:star} follows if we can prove:
\begin{equation}\label{eq:liminf_psi}
\liminf_{\rho\to 0}\liminf_{N\to \infty}\psi_N \geq \frac{1}{\tilde J_{\alpha}\Big( \frac{\hat{\beta}}{M^{\frac{a+2}{2}}}\Big)},\quad \text{and} \quad \liminf_{\rho\to 0}\liminf_{N\to\infty}\varphi_N\geq \frac{\tilde J_{\alpha}\Big( \frac{\hat{\beta}}{M^{\frac{a+2}{2}}}\Big)}{\tilde J_{\alpha}\Big( \hat{\beta}\Big)}.
\end{equation}
We start by proving the first estimate with $\psi_N$. 
We have
\begin{equation}\label{eq:psi}
\psi_N=E^{\otimes 2}\left[ \prod_{n=1}^{t_1}\{1+l_n^{1,2}\}\right]-E^{\otimes 3}\left[ \prod_{n=1}^{t_1}\{1+l_n^{1,2}\}\mathds{1}_{S_{t_1}^1-S_{t_1}^3\not\in A_N}\right]\,,
\end{equation}
where by \eqref{eq:linearTime},
$\lim_{N\to\infty}E^{\otimes 2}\left[ \prod_{n=1}^{t_1}\{1+l_n^{1,2}\}\right] = \frac{1}{\tilde J_{\alpha}\Big( \hat{\beta}/M^{\frac{a+2}{2}}\Big)}$.
We thus focus on the second term on the right-hand side of \eqref{eq:psi}. By H\"older Inequality with $p,q\geq 1$ s.t. $\frac{1}{p}+\frac{1}{q}=1$, we have
\begin{equation} \label{eq:EPq}
E^{\otimes 3}\left[ \prod_{n=1}^{t_1}\{1+ l_n^{1,2}\}\mathds{1}_{S_{t_1}^1-S_{t_1}^3\not\in A_N}\right]\leq E^{\otimes 2}\left[\prod_{n=1}^{t_1}\{1+ l_n^{1,2}\}^p\right]^{\frac{1}{p}}P^{\otimes 2}\left(S_{t_1}^1-S_{t_1}^3\not\in A_N\right)^{\frac{1}{q}}.
\end{equation}
Using the central limit theorem for sums of i.i.d.\ random variables yields
 \begin{equation} \label{eq:Pvanishes}
     P^{\otimes 2}\left(S_{t_1}^1-S_{t_1}^3\not\in A_N\right)^{\frac{1}{q}}=P\left(S_{2t_1}\not\in A_N\right)^{\frac{1}{q}}=P\left(\frac{S_{2t_1}}{\sqrt{2t_1}}\not\in \Big[\frac{\rho}{\sqrt{2}},\frac{\rho^{-1}}{\sqrt{2}}\Big]\right)^{\frac{1}{q}}\xrightarrow[\substack{N\to\infty \\ \rho\to 0}]{} 0,
 \end{equation}
 where we took first $N\to \infty$ and then $\rho \to 0$.
 Thus, by $(1+l_n^{1,2})^p\leq e^{p\,l_n^{1,2}}$ and \eqref{eq:finiteness2ndMoment} with $\hat \beta$ replaced by $\sqrt{p} \hat \beta <1$ for $p=p(\hat\beta)> 1$ small enough, the right-hand side of \eqref{eq:EPq} is uniformly bounded by a constant.  Combined with \eqref{eq:Pvanishes}, we obtain the first inequality of \eqref{eq:liminf_psi}.
On the other hand, the second lower bound of
\eqref{eq:liminf_psi} follows from Lemma \ref{prop:lowerBound}.

Repeating the proof of \eqref{eq:L2_Z0}, one can show that $\forall k< M$ we have
$\tilde{Z}_{k} - Z_{k+1}\tilde{Z}_{k+1} \cvLdeux 0$.
Then, by independence of $\tilde{Z}_{k}-Z_{k+1}\tilde{Z}_{k+1}$ and $(Z_i)_{i\leq k}$ and since $\limsup_N\IE[Z_i^2]<\infty$ by \eqref{eq:finiteness2ndMoment}, one obtains
\begin{equation*}
W_N-\prod_{k=1}^{M}Z_{k}
=W_N-Z_1\tilde{Z}_1+Z_1(\tilde{Z}_1-Z_2\tilde{Z_2}) +\dots +\prod_{i=1}^{M-2}Z_i(\tilde{Z}_{M-1}-Z_{M-1}\tilde{Z}_{M-1})\xrightarrow[N \to \infty]{L^2} 0.
\end{equation*}
\end{proof}

\section{Estimates on the increments} \label{sec:increments}
In the following, we provide estimates on the moments of the centered variables 
\[{U_{k,M,N} \coloneqq Z_{k,M,N} - 1}, \quad k = 1, \dots, M,\]
with $Z_{k,M,N}$ in \eqref{eq:defZkMN}. These estimates allow us to verify the remaining conditions \ref{item:secmomCLT} and \ref{item:Lindeberg}
of Theorem \ref{thm:general} and thereby conclude the proof of Theorem \ref{thm:clt}.
We emphasize that the variables \( U_{k,M,N} \)
are independent and centered.

\begin{lemma}[Second moment estimates]\label{lem:second_moment_Uk}
    For any $\hat{\beta} \in [0,z_a)$, there exists $C_{\hat \beta}>0$ such that
    \begin{equation}\label{eq:secmomUk}
    \limsup_{N\to\infty} \sup_{1 \le k\leq M}\IE\left[ U_{k,M,N}^2\right] \leq  \frac{C_{\hat \beta}}{M}\,,
\end{equation}
and
\begin{equation}
    \label{eq:secmomconvCLTTT}
    \lim_{M \to \infty} \, \lim_{N \to \infty} \sum_{k=1}^M \mathbb{E} \big[ U_{k,M,N}^2\big] = \lambda^2 \coloneqq \log \tilde J_\alpha (\hat \beta )^{-1}\,.
\end{equation}
\end{lemma}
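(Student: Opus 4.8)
The starting point is the expansion of $Z_{k,M,N}$ as a polynomial chaos. Writing out the product $\prod_{n=t_{k-1}+1}^{t_k}\{1+\beta_N\omega(n,S_n)\}$ inside the random walk expectation and expanding, $U_{k,M,N}=Z_{k,M,N}-1$ equals a sum over $r\geq 1$ of terms of the form $\beta_N^r\sum_{t_{k-1}<n_1<\dots<n_r\leq t_k}\sum_{x_1,\dots,x_r}\prod_i p_{n_i-n_{i-1}}(x_i-x_{i-1})\,\omega(n_1,x_1)\cdots\omega(n_r,x_r)$ (with appropriate convention at the first index). Since the $\omega$'s are centered, distinct multi-indices give orthogonal contributions, so $\IE[U_{k,M,N}^2]$ is a sum of squares. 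The key identity is that this second moment equals $E^{\otimes 2}\big[\prod_{n=t_{k-1}+1}^{t_k}\{1+\beta_N^2 h(S_n^1-S_n^2)\}\big]-1$, i.e.\ by Lemma~\ref{lem:second_moment} (or rather the product version \eqref{eq:linearTime} and its proof),
\[
\IE[U_{k,M,N}^2]=E\Big[\prod_{n=t_{k-1}+1}^{t_k}\{1+\beta_N^2 h(S_{2n})\}\Big]-1.
\]
Thus the whole lemma reduces to controlling this product expectation over the dyadic window $[t_{k-1},t_k]=[N^{(k-1)/M},N^{k/M}]$.

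For the uniform bound \eqref{eq:secmomUk}, I would apply the remark \eqref{eq:linearTime} with $\zeta=(k-1)/M$, $\xi=k/M$: the limit is $\tilde J_\alpha(\hat\beta((k-1)/M)^{(a+2)/2})/\tilde J_\alpha(\hat\beta(k/M)^{(a+2)/2})$. Since $\tilde J_\alpha$ is continuous and nonvanishing on $[0,\hat\beta]\subset[0,z_a)$, this ratio is $1+O(1/M)$ uniformly in $k$ — indeed $((k-1)/M)^{(a+2)/2}-(k/M)^{(a+2)/2}=O(1/M)$ uniformly — so the limit minus $1$ is $O(1/M)$, giving \eqref{eq:secmomUk} with $C_{\hat\beta}$ depending on the modulus of continuity of $\log\tilde J_\alpha$ on $[0,\hat\beta]$. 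One subtlety: \eqref{eq:secmomUk} is a $\limsup_N$ statement for each fixed $M$, uniform in $k\leq M$; I would need to check the convergence in \eqref{eq:linearTime} (equivalently in Lemmas~\ref{thm:upper_bound} and~\ref{prop:lowerBound}) is uniform over the $M$ windows. This should follow by re-examining those proofs: the error terms there are controlled by quantities like $1/M$ and $N^{-c N^{1/M}}$ that do not worsen when one asks for uniformity over $k\leq M$, but it deserves a careful remark rather than a wave of the hand.

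For the telescoping limit \eqref{eq:secmomconvCLTTT}, I would write
\[
\sum_{k=1}^M \IE[U_{k,M,N}^2]\xrightarrow[N\to\infty]{}\sum_{k=1}^M\left(\frac{\tilde J_\alpha\big(\hat\beta((k-1)/M)^{(a+2)/2}\big)}{\tilde J_\alpha\big(\hat\beta(k/M)^{(a+2)/2}\big)}-1\right),
\]
then take $M\to\infty$. Using $\frac{b}{a}-1=\log\frac{b}{a}+O((\log\frac ba)^2)$ with $a=\tilde J_\alpha(\hat\beta(k/M)^{(a+2)/2})$, $b=\tilde J_\alpha(\hat\beta((k-1)/M)^{(a+2)/2})$, the leading sum telescopes:
\[
\sum_{k=1}^M\log\frac{\tilde J_\alpha\big(\hat\beta((k-1)/M)^{(a+2)/2}\big)}{\tilde J_\alpha\big(\hat\beta(k/M)^{(a+2)/2}\big)}=\log\frac{\tilde J_\alpha(0)}{\tilde J_\alpha(\hat\beta)}=-\log\tilde J_\alpha(\hat\beta)=\lambda^2,
\]
since $\tilde J_\alpha(0)=1$. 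The error terms sum to $O\big(\sum_k(\log(b/a))^2\big)=O(M\cdot(1/M)^2)=O(1/M)\to 0$, because each summand satisfies $\log(b/a)=O(1/M)$ by the uniform bound above. This yields \eqref{eq:secmomconvCLTTT}.

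**Main obstacle.** The genuine content is already packaged in Theorem~\ref{thm:main}/equation~\eqref{eq:linearTime} (the Bessel-function second-moment asymptotics, proved via the recursive scheme of Section~\ref{sec:recursive_formula}), so the remaining work is bookkeeping. The one point requiring real care is the uniformity in $k$: the statements \eqref{eq:2ndMomentZetaXilimit}–\eqref{eq:linearTime} are pointwise in $(\zeta,\xi)$, whereas here $(\zeta,\xi)=((k-1)/M,k/M)$ moves with $N$-independent but $M$-dependent spacing, and \eqref{eq:secmomUk} demands a bound uniform over all $k\le M$ simultaneously. I expect this to follow by inspecting the proofs of Lemmas~\ref{thm:upper_bound} and~\ref{prop:lowerBound}: the comparison-to-integral estimates there already produce errors of order $1/M$ plus exponentially small corrections, uniformly in the starting annulus, so replacing a fixed $(\zeta,\xi)$ by the family of dyadic pairs costs nothing essential. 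Making that uniformity explicit — or alternatively invoking the monotonicity of $n\mapsto E[\prod_{m\le n}\{1+\beta_N^2 h(S_{2m})\}]$ to sandwich each windowed product between two full ones — is the step I would write out most carefully.
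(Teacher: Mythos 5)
Your proposal follows essentially the same route as the paper's proof: reduce $\mathbb{E}[U_{k,M,N}^2]=\mathbb{E}[Z_{k,M,N}^2]-1$ to the random-walk expectation $E\big[\prod_{n=t_{k-1}+1}^{t_k}\{1+\beta_N^2 h(S_{2n})\}\big]-1$, invoke the second-moment limit \eqref{eq:linearTime} with $\zeta=(k-1)/M$, $\xi=k/M$, and exploit that the limit ratio is $1+O(1/M)$. The only substantive difference is the final step: the paper applies the mean value theorem and identifies the resulting sum as a Riemann sum for $-\int_0^1 \frac{d}{dt}\log\tilde J_\alpha\big(\hat\beta t^{(a+2)/2}\big)\,\mathrm{d}t$, whereas you telescope the logarithms directly, which is arguably cleaner (and incidentally sidesteps a chain-rule factor that the paper's displayed Riemann-sum integrand in fact suppresses). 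Either route works. A smaller remark: your worry about uniformity in $k\le M$ is unfounded — for fixed $M$ there are only finitely many windows, so $\limsup_N\sup_{k\le M}=\max_{k\le M}\limsup_N$ and $\lim_N\sum_k=\sum_k\lim_N$; the pointwise limit \eqref{eq:linearTime} for each $k$ suffices with no re-inspection of Lemmas \ref{thm:upper_bound} and \ref{prop:lowerBound}.

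There is, however, one genuine flaw in your justification. The intermediate claim that $\big((k-1)/M\big)^{(a+2)/2}-\big(k/M\big)^{(a+2)/2}=O(1/M)$ uniformly in $k$ is \emph{false} when $a\in(-1,0)$: there the exponent $(a+2)/2\in(1/2,1)$, and taking $k=1$ gives a difference $M^{-(a+2)/2}$, which is much larger than $1/M$. Since you then feed this into the modulus of continuity of $\log\tilde J_\alpha$ on $[0,\hat\beta]$ (measured in the $z$-variable), the $O(1/M)$ bound — and with it both \eqref{eq:secmomUk} and the $O(\sum_k(1/M)^2)$ error control in your telescoping — would not follow as written for negative $a$. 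The conclusion is nonetheless correct; the right observation is that the \emph{composite} map $t\mapsto\tilde J_\alpha\big(\hat\beta t^{(a+2)/2}\big)$ is $C^1$ on $[0,1]$ with bounded derivative, because $\tilde J_\alpha$ is an even power series so $\tilde J'_\alpha(z)=O(z)$ near $0$, making the derivative $\tilde J'_\alpha\big(\hat\beta t^{(a+2)/2}\big)\hat\beta\tfrac{a+2}{2}t^{a/2}=O(t^{a+1})$ near $t=0$, bounded for $a>-1$. Applying the mean value theorem to this composite (or to its logarithm) gives the uniform $O(1/M)$ you need. You should also note that this is precisely where the hypothesis $a>-1$ enters.
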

Note that \eqref{eq:secmomconvCLTTT} directly implies condition \ref{item:secmomCLT} in Theorem \ref{thm:general}, where the value $\sigma_M^2$ therein can be explicitly derived in the following proof, while $\sigma^2=\lambda^2$. 

\begin{proof}
Fix $1 \le k \le M$ and recall that $\IE[U_{k,M,N}^2]=\IE[Z_{k,M,N}^2]-1$. By Lemma \ref{thm:upper_bound} and Appendix \ref{sec:approximation}, we easily get 
\begin{equation*}
\begin{split}
    \limsup_{N \to \infty} \sup_{1 \le k \le M} \mathbb{E} \big[ U_{k,M,N}^2 \big] &\le \sup_{1 \le k \le M}  \left\{\frac{\tilde{J}_{\alpha}\left(\hat{\beta} \big(\frac{k-1}{M}\big)^{\frac{a+2}{2}}\right)}{\tilde{J}_{\alpha}\left(\hat{\beta} \big(\frac{k}{M}\big)^{\frac{a+2}{2}}\right)}-1\right\}\\
    &= \sup_{1 \le k \le M}  \left\{\frac{\tilde{J}_{\alpha}\left(\hat{\beta} \big(\frac{k-1}{M}\big)^{\frac{a+2}{2}}\right)-\tilde{J}_{\alpha}\left(\hat{\beta} \big(\frac{k}{M}\big)^{\frac{a+2}{2}}\right)}{\tilde{J}_{\alpha}\left(\hat{\beta} \big(\frac{k}{M}\big)^{\frac{a+2}{2}}\right)}\right\}\\
    &\le \frac{1}{M }\frac{ \sup_{z \in [0,{\hat{\beta}}]} |\tilde{J}_\alpha'(z)|}{\tilde{J}_\alpha(\hat \beta )} =:\frac{C_{\hat \beta}}{M}\,,
\end{split}
\end{equation*}
where we applied the mean value theorem.

We now prove \eqref{eq:secmomconvCLTTT}. Again by Theorem \ref{thm:main}, Appendix \ref{sec:approximation} and the mean value theorem, we get
\begin{equation*}
    \begin{split}
       \sigma_M^2 \coloneqq \lim_{N \to \infty} \sum_{k=1}^M \mathbb{E} \big[ U_{k,M,N}^2 \big] &=   \sum_{k=1}^{M} \left\{\frac{\tilde{J}_{\alpha}\left(\hat{\beta} \big(\frac{k-1}{M}\big)^{\frac{a+2}{2}}\right)-\tilde{J}_{\alpha}\left(\hat{\beta} \big(\frac{k}{M}\big)^{\frac{a+2}{2}}\right)}{\tilde{J}_{\alpha}\left(\hat{\beta} \big(\frac{k}{M}\big)^{\frac{a+2}{2}}\right)}\right\}\\
       &= -\frac{1}{M} \sum_{k=1}^{M} \frac{\tilde{J}'_{\alpha}\left(\hat{\beta} \big(z_k\big)^{\frac{a+2}{2}}\right)}{\tilde{J}_{\alpha}\left(\hat{\beta} \big(\frac{k}{M}\big)^{\frac{a+2}{2}}\right)}\,,
    \end{split}
\end{equation*}
for some $z_k \in \left( \frac{k-1}{M}\,, \frac{k}{M} \right)$, $k=1,\ldots,M$. By a Riemann sum approximation, sending $M \to \infty$ we finally obtain the conclusion, indeed
\begin{equation*}
    \lim_{M \to \infty} \sigma_M^2 = - \int_0^1 \frac{\tilde{J}'_{\alpha}\left(\hat{\beta} \big(t\big)^{\frac{a+2}{2}}\right)}{\tilde{J}_{\alpha}\left(\hat{\beta} \big(t\big)^{\frac{a+2}{2}}\right)} \, \text{d}t = - \log \tilde J_\alpha (\hat \beta ) =\log \tilde J_\alpha (\hat \beta )^{-1}\,,
\end{equation*}
where we have used that $\tilde J_\alpha(0)=1$ for all $\alpha > -1$.
\end{proof}

In the proof of the following result, we show that $U_{k,M,N}$ can be naturally expressed as a \emph{polynomial chaos} in the variables $\omega$'s. This will be essential to exploit \emph{hypercontractivity of polynomial chaos} (see \cite{caravenna2, MOO10} and \cite{janson_1997}) and therefore control the moments of $U_{k,M,N}$ slightly larger than two in terms of the second moment bound \eqref{eq:secmomUk}. However, due to the correlation in the environment $\omega$, classical hypercontractivity results — which rely on independence — cannot be applied directly. To overcome this, we express $\omega$ as a sum of suitably chosen independent Gaussian variables, allowing us to represent $U_{k,M,N}$ as a polynomial chaos in independent variables.

\begin{lemma}[Higher moments estimate]\label{lemma:fourth_moment}
There exist $\varepsilon_0=\varepsilon_0(\hat \beta)\in (0,1)$ and $c_{\hat \beta} <\infty$ such that for all $M>0$:
\begin{equation}\label{eq:fourth_moment}
  \limsup_{N\to\infty} \sup_{1 \le k\leq M} \IE[\left|U_{k,M,N}\right|^{2+\varepsilon_0}]\leq \frac{c_{\hat \beta}}{M^{1+\frac{\varepsilon_0}{2}}}.
\end{equation}
\end{lemma}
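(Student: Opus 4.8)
\textbf{Proof strategy for Lemma \ref{lemma:fourth_moment}.}
The plan is to express $U_{k,M,N} = Z_{k,M,N}-1$ as a polynomial chaos expansion in \emph{independent} Gaussian variables, and then apply hypercontractivity to bootstrap the second-moment bound \eqref{eq:secmomUk} into a bound on the $(2+\varepsilon_0)$-th moment. First I would recall the factorization hypothesis $h = h_0 \star h_0$ with $h_0 \ge 0$ on $\mathbb{Z}^2$: this lets us write, for each time slice $n$, the Gaussian field $\omega(n,\cdot)$ as a convolution $\omega(n,x) = \sum_{y} h_0(x-y)\,\eta(n,y)$ where $\{\eta(n,y)\}_{(n,y)\in\mathbb N\times\mathbb Z^2}$ is a family of \emph{i.i.d.}\ standard Gaussians (one checks $\IE[\omega(n,x)\omega(m,z)] = \mathds 1_{n=m}\sum_y h_0(x-y)h_0(z-y) = \mathds 1_{n=m}(h_0\star h_0)(x-z) = \mathds 1_{n=m}h(x-z)$ as required by \eqref{eq:correlation_def}). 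Expanding the product defining $Z_{k,M,N}$ in \eqref{eq:defZkMN}, then substituting this representation of $\omega$, yields
\begin{equation*}
U_{k,M,N} = \sum_{r\ge 1} \ \sum_{\substack{t_{k-1}<n_1<\dots<n_r\le t_k}} \ \sum_{x_1,\dots,x_r}\ \beta_N^r \, q_{n_1,\dots,n_r}(x_1,\dots,x_r)\ \prod_{i=1}^r \omega(n_i,x_i),
\end{equation*}
with $q$ a product of random-walk transition kernels (the analogue of \eqref{eq:second_moment_chaos}); re-expanding each $\omega(n_i,x_i)=\sum_{y_i}h_0(x_i-y_i)\eta(n_i,y_i)$ and collecting terms (noting the $n_i$ are distinct, so the $\eta(n_i,\cdot)$ live on distinct time layers) exhibits $U_{k,M,N}$ as a genuine polynomial chaos in the i.i.d.\ family $\{\eta(n,y)\}$ — this is the content announced as Equation \eqref{eq:X_mDecU_N} in the lemma reference. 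Since each monomial has degree $r\ge 1$ in the $\eta$'s, $U_{k,M,N}$ lies in $\bigoplus_{r\ge 1} \mathcal H_r$ where $\mathcal H_r$ is the $r$-th Wiener chaos of the $\eta$'s.

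Next I would invoke hypercontractivity for polynomial chaos in independent variables (as in \cite{caravenna2}; see also \cite{MOO10, janson_1997}): for a random variable $X = \sum_{r\ge 1} X_r$ with $X_r$ of chaos degree $r$ and for $p = 2+\varepsilon_0>2$, one has $\|X\|_{p} \le \sum_{r\ge 1}(p-1)^{r/2}\|X_r\|_2$, or more conveniently the degreewise bound $\|X_r\|_p \le (p-1)^{r/2}\|X_r\|_2$. The subtlety — and why the chaos must be in \emph{independent} variables — is that hypercontractivity in the correlated Gaussians $\omega$ directly is not available with the right constants; the decomposition into $\eta$'s is precisely what circumvents this. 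Applying this termwise gives $\IE[|U_{k,M,N}|^{2+\varepsilon_0}] \le \big(\sum_{r\ge1}(1+\varepsilon_0)^{r/2}\|U^{(r)}_{k,M,N}\|_2\big)^{2+\varepsilon_0}$, where $U^{(r)}$ is the degree-$r$ component. I would then control $\sum_r (1+\varepsilon_0)^{r/2}\|U^{(r)}\|_2^{}$ using the second-moment expansion: $\|U^{(r)}\|_2^2$ equals the $r$-th term of the chaos series for $\IE[Z_{k,M,N}^2]-1$, which by the computations behind Lemma \ref{thm:upper_bound} (applied with $\zeta = (k-1)/M$, $\xi=k/M$, and tracking the geometric structure of \eqref{eq:final_est_ub2}) is bounded by $\big(C_{\hat\beta}/M\big)^{r}$ up to combinatorial factors uniform in $k$. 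Choosing $\varepsilon_0 = \varepsilon_0(\hat\beta)$ small enough that $(1+\varepsilon_0)\hat\beta^2 < z_a^2$ (so the relevant Bessel series still converges after inflating $\hat\beta$), the series $\sum_r (1+\varepsilon_0)^{r/2}(C_{\hat\beta}/M)^{r/2}$ is dominated by its first term $\asymp M^{-1/2}$ for $M$ large, giving $\IE[|U_{k,M,N}|^{2+\varepsilon_0}] \lesssim (M^{-1/2})^{2+\varepsilon_0} = M^{-1-\varepsilon_0/2}$ uniformly in $k\le M$, which is \eqref{eq:fourth_moment}.

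\textbf{Main obstacle.} The delicate point is obtaining the degreewise second-moment bound $\|U^{(r)}_{k,M,N}\|_2^2 \le (C_{\hat\beta}/M)^r$ with a constant $C_{\hat\beta}$ \emph{uniform in $k$ and in $r$}, rather than merely controlling the total sum $\sum_r \|U^{(r)}\|_2^2 = \IE[Z_{k,M,N}^2]-1 \le C_{\hat\beta}/M$. One must re-run the chaos expansion of Section \ref{subsec:upperbound} keeping the number of return-points $r = k$ fixed and verify that each additional factor contributes at most $O(1/M)$ — heuristically because a time window of length $N^{k/M}-N^{(k-1)/M}$ has ``width $1/M$'' on the logarithmic scale, exactly as reflected in the factor $(\xi-\zeta)^{(2+a)r}$ appearing in \eqref{eq:firstBoundPsiLead}. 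Handling the $l_i=0$ and boundary annuli and the non-separated scales (the ``bad'' configurations of \eqref{eq:bad1}--\eqref{eq:bad2}) at the level of individual chaos degrees, rather than in aggregate as in the proof of \eqref{eq:psinegl}, requires care but follows the same Lemma \ref{lem:order_of_RNx}--Lemma \ref{lem:bound_h} toolkit; once this uniform-in-$r$ bound is in place, the summation over $r$ and the choice of $\varepsilon_0$ are routine.
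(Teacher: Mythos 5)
Your decomposition step is exactly the paper's: write $\omega(n,x) = \sum_y h_0(y)\,\omega_0(n,y+x)$ with $\omega_0$ i.i.d.\ standard Gaussians (the convolution identity $h = h_0\star h_0$ makes the covariance check out), re-expand the polynomial chaos in these independent variables, and invoke hypercontractivity. However, the \emph{form} of hypercontractivity you use is too weak, and the degreewise second-moment estimate you then rely on is actually false.

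You apply the triangle-inequality consequence of hypercontractivity, $\|U\|_{2+\varepsilon_0} \le \sum_r (1+\varepsilon_0)^{r/2}\|U^{(r)}\|_2$, which forces you to control the individual chaos summands and claim $\|U^{(r)}_{k,M,N}\|_2^2 \le (C_{\hat\beta}/M)^r$ uniformly in $k\le M$. This claim does not hold. Take $a=0$ (so $\tilde J_\alpha=\cos$) and $k=M$, $\zeta=(M-1)/M$, $\xi=1$. Then $\IE[Z_{M,M,N}^2]\to \cos(\hat\beta(1-\tfrac1M))/\cos(\hat\beta)$, whose $\hat\beta^{2r}$-coefficient, which is exactly $\lim_N\|U^{(r)}\|_2^2/\hat\beta^{2r}$, equals $0$ at $1/M=0$ with a \emph{nonzero first derivative in $1/M$} (e.g.\ for $r=2$ one finds $\tfrac1{3M}+O(M^{-2})$). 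So each chaos level contributes $\Theta(1/M)$, not $\Theta(M^{-r})$. The intuition fails because only the \emph{first} return is confined to the window $[t_{k-1},t_k]$ (giving one factor $\sim\tfrac1M\log N$ via Lemma \ref{lem:order_of_RNx}); the subsequent wait times $n_i-n_{i-1}$ can be as small as~$1$, so their Green's-function budgets are $\Theta(\log N)$, not $\Theta(\tfrac1M\log N)$. Your "dominated by the first term" reasoning in the series $\sum_r (1+\varepsilon_0)^{r/2}(C/M)^{r/2}$ thus rests on an incorrect premise, even though by coincidence the final rate $M^{-1-\varepsilon_0/2}$ turns out to be correct.

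The paper avoids the issue by using the stronger hypercontractive inequality stated in \cite[Theorem B.1]{caravenna2}, namely
$\IE\big[|U_{k,M,N}|^{2+\varepsilon}\big]\le \bigl(\sum_{m\ge1} c_\varepsilon^{2m}\,\IE\big[(X_m^{(N)})^2\big]\bigr)^{1+\varepsilon/2}$
with $c_\varepsilon=\sqrt{1+\varepsilon}$ in the Gaussian case. The crucial observation is that the inner sum $\sum_m c_\varepsilon^{2m}\IE[(X_m^{(N)})^2]$ is \emph{identically} the expansion of $\IE[Z_{k,M,N}^2]-1$ with $\hat\beta$ replaced by $c_\varepsilon\hat\beta$, so no degreewise bound is needed at all: one just chooses $\varepsilon_0$ so that $c_{\varepsilon_0}\hat\beta < z_a$ and then quotes the second moment estimate \eqref{eq:secmomUk} at the inflated temperature, obtaining $\bigl(C_{c_{\varepsilon_0}\hat\beta}/M\bigr)^{1+\varepsilon_0/2}$ directly. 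You should replace your weaker hypercontractivity step with this $L^2$-norm-squared version; once you do, the "main obstacle" you flag simply disappears.
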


We stress that \eqref{eq:fourth_moment} implies \eqref{eq:Lindeberg}, thus condition \ref{item:Lindeberg} holds true.

\begin{proof}
By definition of $Z_{k,M,N}$ (see \eqref{eq:defZkMN}), $U_{k,M,N}=Z_{k,M,N}-1$ can be expressed as a polynomial chaos, i.e.\ a multilinear polynomial in the variables $\omega(n,x)$'s: 
\begin{equation}\label{eq:Ukpolch}
    U_{k,M,N}=\sum_{m=1}^\infty \beta_N^m \sum_{\substack{t_{k-1}<n_1<\cdots<n_m \le t_k\\ x_1,\ldots,x_m \in \Z^2, \ x_0:= 0}} \prod_{i=1}^m p_{n_i-n_{i-1}}(x_i-x_{i-1}) \, \omega(n_i,x_i)\,.
\end{equation}
Since the disorder is Gaussian by assumption, for any $(n,x)\in \N \times \Z^2$ it is possible to write $\omega(n,x) = \sum_{y \in \Z^2} h_0(y) \omega_0(n,y+x)$, where $\{\omega_0(n,x)\}_{(n,x)\in \N \times \Z^2}$ are i.i.d.\ standard normal random variables and $h_0$ was introduced below the expression \eqref{eq:correlation_function}\footnote{Indeed, notice that for all $(n,x), (n,x') \in \N \times \Z^2$, we have
$\mathbb{E} [ \omega(n,x)\omega(n,x')]
        =\sum_{y \in \Z^2}h_0(y) h_0(y+x-x')= h_0 * h_0 (x-x')=h(x-x')\,.$}. 
Plugging the new expression into \eqref{eq:Ukpolch} and by the change of variables $z_i=y_i+x_i$ for $i=1,\ldots,m$, we write $U_{k,M,N}=\sum_{m=0}^\infty X_m^{(N)}$, where $X_0^{(N)}=0$ and for $m \ge 1$ each $X_m^{(N)}$ is a polynomial chaos in the \emph{independent} variables $\omega_0(n,x)$'s:
\begin{equation} \label{eq:X_mDecU_N}
    X_m^{(N)}= \beta_N^m \sum_{\substack{t_{k-1}<n_1<\cdots<n_m \le t_k\\ z_1,\ldots,z_m \in \Z^2}}  \, \psi\big( n_1,z_1,\ldots,n_m,z_m\big) \prod_{i=1}^m \omega_0(n_i,z_i)\,,
\end{equation}
with 
\[
\psi\left( n_1,z_1,\ldots,n_m,z_m\right) := \sum_{y_1,\ldots,y_m \in \Z^2,y_0:=0 } \prod_{i=1}^m p_{n_i-n_{i-1}}\left( (z_i-z_{i-1})-(y_i-y_{i-1})\right)h_0(y_i).
\]
By the Gaussianity of the $\omega_0(n,x)$'s and by Proposition \ref{thm:upper_bound}, the formulas (B.3) and (B.4) in \cite{caravenna2} hold true. We can then apply \emph{hypercontractivity for polynomial chaos} \cite[Theorem B.1]{caravenna2} and deduce that for every $\varepsilon >0$ there exists a constant $c_\varepsilon$ uniform in $N$ such that $c_\varepsilon\to 1$ as $\varepsilon\to 0$ (in particular, $c_\varepsilon=\sqrt{1+\varepsilon}$ in the Gaussian case, see \cite{MOO10}) and 
\begin{equation*}
\begin{split}
\IE\left[ |U_{k,M,N}|^{2+\varepsilon} \right] &= \IE \left[ \left| \sum_{m=1}^\infty X_m^{(N)} \right|^{2+\varepsilon}\right]\leq \left( \sum_{m=1}^\infty c_\varepsilon^{2m}\IE\left[ (X_m^{(N)})^2 \right] \right)^{1+\frac{\varepsilon}{2}}\\
&=\left(\sum_{m=1}^\infty c_\varepsilon^{2m}\beta_N^{2m}  \sum_{\substack{t_{k-1}<n_1<\cdots<n_m \le t_k\\ z_1,\ldots,z_m \in \Z^2}}\, \psi\big( n_1,z_1,\ldots,n_m,z_m\big)^2 \right)^{1+\frac{\varepsilon}{2}}.
\end{split}
\end{equation*}
Observe that the expression above corresponds to the second moment of $U_{k,M,N}$ where instead of $\hat{\beta}$ we consider $c_\varepsilon\,\hat{\beta}$ (recall definitions \eqref{eq:defZkMN} and \eqref{eq:beta_N}). (This can be seen from the expression \eqref{eq:X_mDecU_N} since $U_{k,M,N}=\sum_{m=0}^\infty X_m^{(N)}$ is a sum of orthogonal terms in $L^2$.) Hence, since $\hat{\beta} < z_a$, we can choose $\varepsilon_0=\varepsilon_0(\hat\beta)\in (0,1)$ such that $c_{\varepsilon_0} \, \hat\beta < z_a$ and conclude by \eqref{eq:secmomUk} that
\begin{equation*}
\IE\left[ |U_{k,M,N}|^{2+\varepsilon_0} \right] 
\leq \frac{c_{\hat\beta}}{M^{1+\frac{\varepsilon_0}{2}}}\,,
\end{equation*}
for $N$ large enough, uniformly in $1 \le k\leq M$.
\end{proof}

\section{Proof of Theorem \ref{thm:clt} }\label{sec:end_of_proof}
For $\hat{\beta} \in [0,z_a)$, the central limit theorem \eqref{eq:clt} for the log-partition function $\log W_N^{\beta_N}$ follows by Theorem \ref{thm:general}, whose assumptions have been verified in the previous sections.

The only remaining step is to prove that $W_N^{\beta_N}$ converges to $0$ in probability for $\hat{\beta} \ge z_a$. The argument is standard and can be recovered by following the proof of Theorem 2.8 in \cite[Section 6]{caravenna1}. In particular, it suffices to show that $\mathbb{E}[(W_N^{\beta})^\theta]$ is non-increasing in $\beta$ for some $\theta \in (0,1)$. We note that the proof in our framework is exactly the same as in \cite[Section 6]{caravenna1}. The only delicate point is the application of the FKG inequality in the case of disorder variables that are not independent. However, the inequality still holds under our assumptions, since the variables $\omega$ we consider can be written as linear combinations of independent Gaussian variables with positive coefficients (recall the proof of Lemma \ref{lemma:fourth_moment}).

\section{A recursive sequence of polynomials}\label{sec:recursive_formula}
We prove the recursive formula at the heart of the limit in \eqref{eq:2ndMomentZetaXilimit}.

Let $f_0\equiv 1$ and define the sequence of polynomials $f_k:[0,1]\to \mathbb R_+$ as follows. For all $x\in [0,1]$,
\begin{equation} \label{eq:recursiveformula}
\begin{aligned}
    f_{k+1}(x) &\coloneqq 
    \int_{0}^{1} (1-t \vee x )\, t^a f_k(t) \, \text{d}t=
    (1-x) \int_{0}^{x} t^a f_k(t) \, \text{d}t +  \int_{x}^{1} (1-t)\, t^a f_k(t) \, \text{d}t.
\end{aligned}
\end{equation}
\begin{remark} \label{rk:f''}
Alternatively, \eqref{eq:recursiveformula} writes $f''_{k+1}(x)=-x^a f_k(x)$, $f_{k+1}'(0)=0$, $f_{k+1}(1)=0$.
\end{remark}
Set $\alpha = \frac{a+1}{a+2}-1$ and define $\tilde J_\alpha,\,z_a$ as above \eqref{eq:defJalpha}.
\begin{theorem} \label{lem:recursiveLemmaFormula}
For all $a>-1$, $z\in \big[\,0,\,(a+2)z_a/2\,\big)$ and $x\in [0,1]$,
\begin{equation} \label{eq:keyFormula0}
    \begin{split}
\sum_{k=0}^\infty   f_k(x) z^{2k}  =   \frac{ \tilde J_{\alpha}\left(\frac{2}{a+2}\,z  x^{\frac{a+2}{2}} \right)}{\tilde J_{\alpha}\left(\frac{2}{a+2}\,z\right)}.
\end{split}
\end{equation}
\end{theorem}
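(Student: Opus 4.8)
The plan is to identify the generating function $G(x,z):=\sum_{k\ge 0}f_k(x)z^{2k}$ with the unique solution of a linear boundary value problem and to solve the latter via the classical reduction of $u''+z^2x^au=0$ to Bessel's equation. \emph{Step 1 (the recursion as a boundary value problem).} I would first record, as in Remark~\ref{rk:f''}, that differentiating \eqref{eq:recursiveformula} gives $f_{k+1}'(x)=-\int_0^x t^af_k(t)\,\mathrm dt$, hence $f_{k+1}''=-x^af_k$ on $(0,1)$, $f_{k+1}'(0)=0$, $f_{k+1}(1)=0$; an immediate induction also gives $f_k\ge 0$ on $[0,1]$. Writing $T\psi(x):=\int_0^1(1-t\vee x)\,t^a\psi(t)\,\mathrm dt$, one has $f_k=T^kf_0$ with $f_0\equiv 1$, the operator $T$ preserves nonnegativity and monotonicity, and (by the same computation as in Remark~\ref{rk:f''}) $v=T\psi$ is characterised by $v''=-x^a\psi$, $v'(0)=0$, $v(1)=0$.

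\emph{Step 2 (convergence up to the optimal radius).} The elementary bound $f_k(0)\le\big((a+1)(a+2)\big)^{-k}$, coming from $\int_0^1(1-t)t^a\,\mathrm dt=\frac1{(a+1)(a+2)}$, only yields a radius of convergence $\ge\sqrt{(a+1)(a+2)}$, which is in general strictly below $(a+2)z_a/2$ (already for $a=0$). To reach the optimal radius I would use a supersolution. Fix $z_1\in(0,(a+2)z_a/2)$ and put $\phi(x):=x^{1/2}J_\alpha\big(\tfrac{2z_1}{a+2}x^{(a+2)/2}\big)$; by Step~3 below, $\phi$ extends continuously to $[0,1]$, is smooth on $(0,1)$, satisfies $\phi''=-z_1^2x^a\phi$ and $\phi'(0)=0$, so by Step~1, $T\phi=z_1^{-2}\big(\phi-\phi(1)\big)$. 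Since $\tfrac{2z_1}{a+2}<z_a$ we have $\phi(1)=J_\alpha(\tfrac{2z_1}{a+2})>0$ (as $J_\alpha>0$ on $(0,z_a)$ when $\alpha>-1$), hence $T\phi\le z_1^{-2}\phi$ pointwise; moreover $\phi>0$ on $[0,1]$. Choosing $c>0$ with $f_0\equiv 1\le c\,\phi$ and applying $T^k$, monotonicity gives $f_k\le c\,z_1^{-2k}\phi$, so $\sup_{[0,1]}f_k\le c\,\|\phi\|_\infty\,z_1^{-2k}$. Letting $z_1\uparrow(a+2)z_a/2$, the series $G(x,z)$ converges uniformly for $x\in[0,1]$ and $z$ in compacts of $(-(a+2)z_a/2,\,(a+2)z_a/2)$, as does the series of second $x$-derivatives (because $f_{k}''=-x^af_{k-1}$).

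\emph{Step 3 (Bessel reduction and identification).} The substitution $\xi=\tfrac{2z}{a+2}x^{(a+2)/2}$, $u(x)=x^{1/2}w(\xi)$ turns $u''+z^2x^au=0$ into $\xi^2w''+\xi w'+\big(\xi^2-\tfrac1{(a+2)^2}\big)w=0$, i.e.\ Bessel's equation of order $\nu:=\tfrac1{a+2}=-\alpha\in(0,1)$; since $\nu$ is non-integer, $\{x^{1/2}J_\nu(\xi),\,x^{1/2}J_{-\nu}(\xi)\}$ is a fundamental system, and from the series for $J_{\pm\nu}$ one sees that $x^{1/2}J_\nu(\xi)$ behaves like a nonzero multiple of $x$ near $0$ while $x^{1/2}J_{-\nu}(\xi)=x^{1/2}J_\alpha(\xi)=\tfrac{(z/(a+2))^\alpha}{\Gamma(\alpha+1)}\,\tilde J_\alpha\big(\tfrac{2z}{a+2}x^{(a+2)/2}\big)$ is, up to a scalar, the only solution with vanishing derivative at $0$. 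Therefore, for $0<z<(a+2)z_a/2$, the boundary value problem $u''=-z^2x^au$, $u'(0)=0$, $u(1)=1$ has the unique solution $u(x)=\tilde J_\alpha(\tfrac{2z}{a+2}x^{(a+2)/2})/\tilde J_\alpha(\tfrac{2z}{a+2})$, uniqueness holding because the homogeneous solution $x^{1/2}J_\alpha(\xi)$ does not vanish at $x=1$, as $\tilde J_\alpha(\tfrac{2z}{a+2})>0$ for $\tfrac{2z}{a+2}\in[0,z_a)$. Finally, by Step~2 and term-by-term differentiation, $G(\cdot,z)$ solves exactly this problem: $\partial_x^2G=-z^2x^aG$, $\partial_xG(0,z)=\sum_kf_k'(0)z^{2k}=0$, and $G(1,z)=\sum_kf_k(1)z^{2k}=f_0(1)=1$. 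Uniqueness then gives \eqref{eq:keyFormula0} for all $x\in[0,1]$ and $0\le z<(a+2)z_a/2$ (the case $z=0$ being trivial).

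\emph{Main obstacle.} The delicate point is Step~2: the naive geometric estimate does not reach $(a+2)z_a/2$, and one must exploit the sub-eigenfunction inequality $T\phi\le z_1^{-2}\phi$ for the Bessel profile $\phi$ to get the sharp exponential bound on $\|f_k\|_\infty$. Step~3 is conceptually classical but requires care in keeping track of the order $\nu=1/(a+2)=-\alpha$ and of the normalising factors $\tfrac2{a+2}$ and $x^{1/2}$ throughout the change of variables.
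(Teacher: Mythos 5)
Your proposal is correct, and it takes a genuinely different route from the paper. The paper proceeds algebraically: it notes $f_k(x)=\sum_{n\le k}c_n^{(k)}x^{n(a+2)}$, derives an explicit recursion $c_n^{(k+1)}=-\lambda_{n-1}^a c_{n-1}^{(k)}$ for the coefficients (closed in terms of $\psi^a(n)=\frac{\Gamma(\alpha')}{n!\,\Gamma(n+\alpha')(a+2)^{2n}}$), forms the two generating functions $A(z)=\sum c_0^{(k)}z^{2k}$ and $B(z,x)=\sum f_k(x)z^{2k}$ on a small disk where $R(x)\ge 1$, recognizes $A(z)^{-1}$ and $G(x,z)$ as the power series for $\tilde J_\alpha$, and finally extends the identity to $z<(a+2)z_a/2$ by analytic continuation using the fact that the zeros of $J_\alpha$ are real. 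You instead characterize $T$ through the boundary value problem $v''=-x^a\psi$, $v'(0)=0$, $v(1)=0$, prove the sharp bound $\|f_k\|_\infty\le c\|\phi\|_\infty z_1^{-2k}$ via the sub-eigenfunction $\phi=x^{1/2}J_\alpha(\tfrac{2z_1}{a+2}x^{(a+2)/2})$ (so that the power series converges up to the true radius $(a+2)z_a/2$ with no analytic continuation), and then identify $G(\cdot,z)$ with the unique BVP solution, built via the Bessel reduction. Your Step~2 is the novel ingredient not present in the paper: the paper only notes $R(x)\ge 1$ and relies on continuation, whereas your supersolution argument reaches the optimal radius directly — at the price of needing the Bessel profile up front. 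Both approaches ultimately hinge on the same substitution $\xi=\tfrac{2z}{a+2}x^{(a+2)/2}$, $u=x^{1/2}w(\xi)$, but the paper never states the ODE explicitly and works it out purely through coefficient recursions. A small remark on Step~3: rather than summing $f_k'(0)z^{2k}$ term by term, it is slightly cleaner to observe that uniform convergence on $[0,1]$ lets one pass $T$ inside the sum, giving $G=1+z^2\,TG$ directly, whence the boundary conditions follow from the definition of $T$ without differentiating at the endpoint. Your observation that $\phi(x)$ is in fact a positive constant multiple of $\tilde J_\alpha(\tfrac{2z_1}{a+2}x^{(a+2)/2})$ (so $\phi(0)$ is finite and the positivity on all of $[0,1]$ is precisely the assumption $\tfrac{2z_1}{a+2}<z_a$) is implicit but should be made explicit so that $1\le c\phi$ can be chosen.
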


\begin{proof}
By Remark \ref{rk:f''}, one readily sees that $f_k$ is a polynomial in $x$ of the form
\begin{equation}\label{eq:recursivef}
    f_k(x) = \sum_{n=0}^{k} c_n^{(k)} x^{n(a+2)}\,,
\end{equation}
with $c_n^{(k)}\in \mathbb R$ for $n\leq k$.
Introducing the operator
\begin{equation*}
    F(g)(x):= \, \bigg(  (1-x) \int_{0}^{x} t^a g(t) \, \text{d}t +  \int_{x}^{1} (1-t)\, t^a g(t) \, \text{d}t\bigg)\,,
\end{equation*} 
and setting $g_n(x):= x^{n(a+2)}$, we easily have
\begin{equation*}
\begin{split}
    F(g_n)(x) &= \bigg( (1-x) \int_{0}^{x} t^a \, t^{n(a+2)} \, \text{d}t +  \int_{x}^{1} (1-t)\, t^a \, t^{n(a+2)} \, \text{d}t \bigg)=\lambda_n^a \big( 1-x^{(n+1)(a+2)} \big)\,,
\end{split}
\end{equation*}
with $\lambda_n^a := \Big( (n+1)(a+2)\big((n+1)(a+2)-1\big)\Big)^{-1}$. Therefore, as $f_{k+1}(x)=F(f_k)(x)$,
\begin{equation}\label{eq:k-k+1}
    \begin{split}
        f_{k+1}(x)= \sum_{n=0}^k c_n^{(k)} F(g_n)(x)&=\sum_{n=0}^k c_n^{(k)}\, \lambda_n^a \, \big( 1-x^{(n+1)(a+2)} \big)= \sum_{n=0}^{k+1} c_n^{(k+1)} x^{n(a+2)}\,,
    \end{split}
\end{equation}
so that for all $n \in\{0,\dots,k+1\}$,
\begin{equation}\label{eq:recursiveC}
\begin{split}
    c_{n}^{(k+1)}&=-\lambda_{n-1}^a \, c_{n-1}^{(k)}=\lambda_{n-1}^a \, \lambda_{n-2}^a \, c_{n-2}^{(k-1)} = \cdots  = (-1)^{n}  \psi^a(n) \, c_0^{(k+1-n)}\, ,
    \end{split}
\end{equation}
where $\psi^a(0)=1$ and denoting by $\Gamma$ the Gamma function, using $\Gamma(z+1)=z\,\Gamma(z)$,
\begin{equation}
    \label{eq:psia}
\psi^a(n) := \prod_{i=0}^{n-1} \lambda_i^a = \frac{1}{n!}\frac{\Gamma(\alpha')}{\Gamma(n+\alpha')}\frac{1}{(a+2)^{2n}}, \quad \alpha' \coloneqq \frac{a+1}{a+2}\,.
\end{equation}
Since $f_{k+1}(1)=\sum_{n=0}^{k+1} c_n^{(k+1)}=0$ (recall \eqref{eq:recursiveformula} and \eqref{eq:recursivef}), from \eqref{eq:recursiveC} we obtain
\begin{equation}\label{eq:C0}
    c_0^{(k+1)}=-\sum_{n=1}^{k+1}c_n^{(k+1)}=\sum_{n=0}^{k} (-1)^{n} \psi^a(n+1) \, c_0^{(k-n)}\,.
\end{equation}

Next, define the generating functions
\begin{equation}\label{eq:A(y)}
    A(z)=\sum_{k=0}^\infty c_0^{(k)} z^{2k},\quad B(z,x) = \sum_{k=0}^\infty f_k(x) z^{2k},\quad z\in \big[0,R(x)\big), \ x\in [0,1]\,,
\end{equation}
where $c_0^{(k)}=f_k(0)$ satisfies the recursive formula \eqref{eq:C0} and $R(x)$ is the radius of convergence of $B(z,x)$ (and $R(0)$ is the radius of convergence of $A(z)=B(z,0)$). We note that $R(x)\geq 1$ for all $x\in [0,1]$ since $f_k(x)\in  [0,1]$.
Recalling \eqref{eq:recursivef}-\eqref{eq:recursiveC}, for all $z \in \big[0, R(x)\wedge R(0)\big)$,
\begin{equation}\label{eq:GA}
    \begin{split}
        B(z,x) & = \sum_{k=0}^\infty \sum_{n=0}^k (-1)^n  \psi^a(n)  c_0^{(k-n)} x^{n(a+2)}z^{2k} \\
        &=\sum_{n=0}^\infty (-1)^n \psi^a(n) x^{n(a+2)}  z^{2n}\sum_{k=n}^\infty c_0^{(k-n)} z^{2k-2n}= G(x,z) \, A(z)\,,
    \end{split}
\end{equation}
where we denoted
$G(x,z) \coloneqq \sum_{n=0}^\infty (-1)^n  \psi^a(n) x^{n(a+2)} z^{2n}$, whose radius of convergence is infinite.
We now compute $A(z)$ for $z\in (0,R(0))$.
As $c^{(0)}_0=1$, we obtain by \eqref{eq:C0},
\begin{equation*}
    \begin{split}
        A(z) = 1 + \sum_{k=0}^\infty c_0^{(k+1)} \,z^{2(k+1)} & =1+ \sum_{k=0}^\infty \sum_{n=0}^k (-1)^{n} \psi^a(n+1)  c_0^{(k-n)}  z^{2(k-n)} \, z^{2(n+1)}\\
        &=1+A(z)  \sum_{n=0}^\infty (-1)^{n}  \psi^a(n+1)  z^{2(n+1)} \,.
    \end{split}
\end{equation*}
Recalling \eqref{eq:psia}, we thus find (when the inverse is well defined)
\begin{equation}
\begin{aligned}
    A(z) &= \bigg( 1-\sum_{n=0}^\infty (-1)^{n}  \psi^a(n+1)  z^{2(n+1)} \bigg)^{-1} = \bigg( \sum_{n=0}^\infty (-1)^n  \psi^a(n)   z^{2n} \bigg)^{-1}\\
    &= \bigg( \Gamma(\alpha')\sum_{n=0}^\infty (-1)^n \,\frac{1}{n!} \, \frac{1}{\Gamma(n+\alpha')} \,\bigg( \frac{z}{a+2}\bigg)^{2n}\, \bigg)^{-1} = \tilde J_\alpha \left( \frac{2z}{a+2}\,\right)^{-1}, 
\end{aligned}
\label{eq:A(y)explicit}
\end{equation}
with $\alpha = \alpha'-1 = \frac{a+1}{a+2}-1$.
For $\alpha > -1$ (which holds for $a>-1$), it is known that all zeros of the Bessel function $J_\alpha$ are real and thus $\tilde J_\alpha(z)\neq 0$ for all $|z| <z_a$ on the complex plane, where we recall that $z_a$ is the first real zero of $\tilde J_\alpha$. Therefore,  \eqref{eq:A(y)explicit} holds for all $z\in \big[0,(a+2)z_a/2\big)$ by analytic continuation.

By a similar computation,  for all $z>0$,
\begin{equation*}
    \begin{split}
         G(x,z) = \sum_{n=0}^\infty (-1)^n  \psi^a(n) x^{n(a+2)} z^{2n}
         = \tilde  J_{\alpha}\bigg( \frac{2 x^{(a+2)/2}\,z}{a+2}\,\bigg)\,,
    \end{split}
\end{equation*} 
hence by \eqref{eq:GA}, 
$
    B(z,x) =  {\tilde J_{\alpha}\left( \frac{2\sqrt{x^{a+2}\,z}}{a+2}\right)}{\tilde J_{\alpha}\left( \frac{2\sqrt{z}}{a+2}\,\right)}^{-1},
$
where the identity can be extended to $z\in [0,z_a)$.
\end{proof}

\section{Concluding remarks and open questions}
\label{sec:openQuestions}
We list below some final comments and possible open problems.
\begin{itemize} 
\item
For a critical spatial correlation decaying as $h(x) \sim |x|^{-2}$, Lacoin \cite{Lacoin-cor} gave a conjecture about the asymptotic behavior of the free energy $p(\beta)$ defined by
\[p(\beta) := \lim_{N\to\infty} \frac{1}{N} \log W_N^\beta, \quad  \ W_N^\beta:= \frac{Z_N^\beta}{\IE[Z_N^\beta]}.
\]
When $d=3$ and $\alpha = 2$, he expected that $p(\beta) = \exp\{-\frac{c}{\beta^2}(1+o(1))\}$ as  $\beta\to 0$, while $p(\beta) = \exp\{-\frac{c'}{\beta}(1+o(1))\}$ for $d=2$ and $\alpha = 2$. 
With a finer tuning $h(x)\sim (\log |x|)^a/|x|^2$ as in \eqref{eq:correlation_function}, one can expect that when $d=2$ and $a>-1$,
\begin{equation} \tag{conj} \label{eq:freeEnergy}
p(\beta)= \exp\left\{-\left(\frac{c_{a}}{\beta}\right)^{\frac{2}{a+2}}(1+o(1))\right\}, \quad \beta\to 0.
\end{equation}
Theorem \ref{thm:main} hints that $c_a=\mathfrak C_a z_a$, where   $z_a$ and $\mathfrak C_a$  are defined in  \eqref{eq:defza} and \eqref{eq:beta_N}. (In fact, by analogy with the space-time independent case, the proof of Theorem \ref{thm:main} should provide some upper bound on the free energy matching \eqref{eq:freeEnergy}, while  the lower bound may require rather involved techniques, see \cite{BH17,BCT25} for more details. We note that in the latter reference, sharp estimates on the error terms $o(1)$ have been recently derived.)
\item What happens when $a=-1$ in \eqref{eq:correlation_function}? We stress that for $a<-1$, the function $h$ is integrable, which implies that the polymer should behave identically to space-time independent weights, cf.\@ \cite{Lacoin-cor}. 
\item We did not touch the cases $d\geq 3$ and $\alpha = 2$. We believe that an induction procedure in the spirit of \eqref{eq:firstBoundPsiLead}, \eqref{eq:recursiveformula} may appear, but we do not expect a phase transition in $\hat \beta$ nor special functions to arise. We refer to \cite{MuTri04} for existing results on this regime in the continuum setting, although we emphasize that the potential considered by Mueller and Tribe is, unlike ours, singular at the origin.
To our understanding, this features creates a blowup at low temperature that we do not expect in our setup. 

\item 
    Assume $\beta_N$ as in \eqref{eq:beta_N}. At the \emph{critical point} $\hat \beta = \hat \beta_c$, we conjecture that the properly rescaled partition function converges to a non-trivial limit. By analogy to the 
    independent noise case, we expect this limit to be the analogue of the celebrated Stochastic Heat Flow \cite{CaSuZyCrit21,Nak25,TsaiMoments24} for spatially correlated driving noise. This is natural to believe in view of \cite{CaSuZyCrit21}, where the Stochastic Heat Flow was first constructed by taking the diffusive limit of the (space-time independent) two-dimensional directed polymer at some critical temperature $\beta_c$, for $\beta_N \sim  \beta / \sqrt{\log N}$.

\item A first, more accessible step beyond the sub-critical regime $\hat{\beta} < \hat{\beta}_c$ is to study the \emph{quasi-critical regime}
(introduced by \cite{CCR25} in the i.i.d.\@ case), which interpolates the sub-critical regime and the critical regime $\hat{\beta} = \hat{\beta}_c$ by taking $\hat{\beta} \uparrow \hat{\beta}_c $ slower than the critical window $\hat{\beta}^2 = \hat{\beta}_c^2 + O\big(\frac{1}{\log N}\big)$.

\item For $\hat \beta < \hat \beta_c$, higher moments $\IE[W_N(\beta_N)^q]$, with a parameter $q=q_N$ that may diverge with $N$ are of interest, see e.g.\ \cite{CN25,cosco,CZ23} in the case of space-time independent noise.
We note that this question is the first step regarding the study of extremal value statistics of the partition function through their connection to log-correlated fields, see \cite{CNZ25,CZ23} for more details. 

\item Moreover, for $\hat \beta < \hat \beta_c$ we conjecture that $W_N(\beta_N)$, should be bounded in every $L^q$, $q\in \mathbb N$, see \cite{LyZy21,LyZy22} in the independent noise case. The second reference suggests that a multivariate extension of Theorem \ref{th:ErdosTaylor} should hold. 
\end{itemize}

\section*{Appendix}\label{Appendix}
\appendix
\setcounter{section}{0}
\setcounter{equation}{0}

\section{Proof of Lemmas \ref{lem:order_of_RNx} and \ref{lem:bound_h}} \label{app:techtools}
In the following, we provide the proofs of the technical results stated in Section \ref{sec:technicaltools}.

\begin{proof}[Proof of Lemma \ref{lem:order_of_RNx}]
Take $x,y$ as in the first part of the statement of the lemma. We have
\[
    |x-y|\leq |x|+|y|
    \leq 2N^{(l'+1)/2M}\,,
\]
and
\[
    |x-y|\geq \big||x|-|y|\big|\geq N^{l'/2M}-N^{(l+1)/2M}\geq \frac{1}{2}N^{l'/2M}\,.
\]
By the Local Limit Theorem (see \cite{book}), denoting $\bar p_n(x) \coloneqq (\pi n)^{-1} e^{-|x|^2/2n}$ we obtain
    \begin{align*}
    R_{N^{\nu},N^{\xi}}(x-y) &=\sum_{n=N^{\nu}+1}^{N^\xi}p_{2n}(x-y)=
    \sum_{n=N^{\nu}+1}^{N^\xi} \bar p_n(x-y)(1+o_N(1))
    \\
    &=\sum_{n=N^{\nu}+1}^{N^\xi} \bar p_n(x-y)
    +o_N(\log N)\,, \label{eq:seclineRN}
    \end{align*}
    where $o_N(1)$ as $N \to \infty$ depends on $\nu, l, l'$. (We drop the integer part notation for simplicity.)
    
    We distinguish
    two cases.
 If $\nu<l'/M<\xi$, we can bound the first term in \eqref{eq:seclineRN} as
    \begin{align}\nonumber
        \sum_{n=N^{\nu}+1}^{N^\xi}\bar p_n(x-y)
        &\leq \sum_{n=N^\nu+1}^{N^{(l'-1)/M}} \frac{1}{\pi n} {e^{-\frac{N^{l'/M}}{8N^{(l'-1)/M}}}}  + \sum_{n=N^{(l'-1)/M}}^{N^{\xi}} \frac{1}{\pi n}\\
        &\leq \sum_{n=N^{(l'-1)/M}}^{N^{\xi}} \frac{1}{\pi n}  + o_N(\log N),
        \nonumber \\
        & \le \left( \xi - \frac{l'}{M} + \frac{1}{M} \right) \frac{\log N}{\pi} + o_N(\log N)\,.
    \end{align}
    where $o_N(\log N)$ depends on $M, \xi, \nu$. Similarly, for for every $\delta>0$ and $N$ large enough,
    \begin{align} \nonumber
         \sum_{n=N^{\nu}+1}^{N^\xi}\bar p_n(x-y)&
         \geq \sum_{n=N^{(l'+1)/M}}^{N^{\xi}} \frac{1}{\pi n} {e^{-\frac{2N^{l'/M}}{N^{(l'+1)/M}}}}
        \geq \sum_{n=N^{(l'+1)/M}}^{N^{\xi}} \frac{(1-\delta)}{\pi n} \\ &\label{eq:upper_bound_2}
        \geq \left( \xi - \frac{l'}{M} -\frac{1}{M} \right)\frac{\log N}{\pi} (1-\delta) + o_N(\log N)\,.
    \end{align}
    Now, if $l'/M<\nu$, by analogy with the previous computations we obtain    \begin{equation}\label{eq:lower_bounds}
    \begin{split}
    &R_{N^{\nu},N^{\xi}}(x-y)\leq(\xi-\nu+\frac{2}{M})\frac{\log N}{\pi}
    \,, \\
        &R_{N^{\nu},N^{\xi}}(x-y) \geq (\xi-\nu- \frac{3}{M})\frac{\log N}{\pi}\,,
    \end{split}
    \end{equation}
    where \eqref{eq:lower_bounds} follows once noticed that $l'/M<\nu$ implies $|x-y|\leq 2N^{\nu/2 + 1/2M}$.
    
    The bounds in \eqref{eq:order_RN_more_general} can be derived via similar techniques by observing that $R_{N^\xi}(x)$ is asymptotically decreasing in $|x|$.

    We turn to \eqref{eq:ChernoffBound}.
    For $N_0(M)$ large enough, we have for all $N\geq N_0$,
\begin{equation*}
    |x - y| \ge N^{\xi + \frac{1}{2M}} - N^{\xi}
    \ge \frac{1}{2} N^{\xi + \frac{1}{2M}}\,,
\end{equation*}
so that
the left-hand side of \eqref{eq:ChernoffBound} is less than
\begin{align}
    &  \, \sum_{n=1}^{N^\xi} P \big( \,|S_{2n}| > N^{\xi + \frac{1}{2M}}\,\big) \leq N^{\xi} e^{-cN^{1/M}}\,,
\end{align}
where $c>0$ and the inequality follows from Chernoff's inequality.
\end{proof}

\medskip

\begin{proof}[Proof of Lemma \ref{lem:bound_h}]
Let $0\leq u<v\leq 1$. We first claim that as $N\to\infty$, one has
\begin{equation}\label{eq:firstsumh}
    \sum_{N^u<|x| \leq N^v,\, x\in \mathbb Z^2_{even}} \frac{\log (|x|)^a}{|x|^2} \sim \frac{\pi}{a+1} (\log N)^{a+1}(v^{a+1}-u^{a+1})\,. 
\end{equation}
Indeed, observe that $\sum_{0<|x| \leq N,\,x\in \mathbb Z^2_{even}} \frac{\log (|x|)^a}{|x|^2}$ and $\sum_{0<|x| \leq N,\,x\in \mathbb Z^2_{odd}} \frac{\log (|x|)^a}{|x|^2}$ only differ by an order one term. (This can be checked using that $\sum_{x\in \mathbb Z^2\setminus\{0\}} \frac{\log (|x|)^a}{|x|^3}$ converges.) Besides, a comparison with an integral yields that $\sum_{0<|x| \leq N,x\in \mathbb Z^2} \frac{\log (|x|)^a}{|x|^2}$ is equivalent to
$\int_{[1,N]^2} \frac{\log (|x|)^a}{|x|^2}\mathrm{d} x = \frac{2\pi}{a+1}(\log N)^{a+1}$.

Let $\varepsilon >0$. By \eqref{eq:firstsumh}, Taylor's inequality and the mean value theorem, we obtain for $N$ large enough
\begin{equation*}
(1-\varepsilon)\pi (\log N)^{a+1} (u^a \wedge v^a) (v-u) \leq \sum_{\substack{N^u<|x| \leq N^v \\ x\in \mathbb Z^2_{even}}} \frac{\log (|x|)^a}{|x|^2} \leq (1+\varepsilon) \pi (\log N)^{a+1} (u^a \vee v^a) (v-u).
\end{equation*}
Thus, there exists  $N_0'=N_0(M,\varepsilon)$ such that for all $N\geq N_0'$ and $\forall \,l\in\{1,\dots,M\}$, 
\begin{equation} \label{eq:boundlogx2}
(1-\varepsilon)\pi\frac{\left(l+\mathds{1}_{a\leq0}\right)^a}{(2M)^a}   \frac{(\log N)^{a+1}}{2M}\leq  \sum_{x\in A_{l}} \frac{\log (|x|)^a}{|x|^2} \leq
(1+\varepsilon) \pi\frac{\left(l+\mathds{1}_{a>0}\right)^a}{(2M)^a} \frac{(\log N)^{a+1}}{2M} \,.
\end{equation}
Moreover, by definition of $h$, we can find $R=R(\varepsilon)>0$ such that
\begin{equation} \label{eq:boundOnh}
 \forall |x|>R,\quad (1-\varepsilon)  \leq  h(x)\, \frac{|x|^2}{ \log (|x|)^a}  \leq  (1+\varepsilon)\,.
\end{equation}
Hence \eqref{eq:sumhAl} follows from \eqref{eq:boundOnh} and \eqref{eq:boundlogx2}. For $l=0$, we split the sum and use \eqref{eq:firstsumh}:
\[
\sum_{x\in A_0}h(x) = \sum_{|x|\leq R} h(x) + \sum_{A_0\setminus B(R)} h(x) \leq \Vert h \Vert_{\infty} R^2 + \frac{\pi}{a+1}\left(\frac{\log N}{2M}\right)^{a+1}.
\]
\end{proof}

\section{$L^2$-approximation in product form}\label{sec:approximation}
Define $Y_N \coloneqq E[\prod_{n=1}^N \{1+\beta_N \omega(n,S_n)\}]$. 
\begin{lemma} \label{lem:Ups}
Let $\hat \beta < z_a$. We have $ Y_N - W_N \xlongrightarrow[N \to \infty]{L^2}  0$.
\end{lemma}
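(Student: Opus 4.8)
The plan is to expand the squared $L^2$ distance into three second-moment terms and show that they share a common limit. Concretely, I would write
\[
\IE\big[(Y_N-W_N)^2\big]=\IE[Y_N^2]-2\,\IE[Y_N W_N]+\IE[W_N^2],
\]
and prove that each of the three terms on the right converges to $\tilde J_\alpha(\hat\beta)^{-1}$ as $N\to\infty$; since $1-2+1=0$, this yields the claim. (Finiteness of the three quantities, needed for the expansion to be meaningful, will be visible from the computation below, using \eqref{eq:allsup2ndMoment}, \eqref{eq:secondMomentFormulast} and Cauchy--Schwarz for the cross term.)

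First I would perform the three Gaussian computations. Let $S^1,S^2$ be independent simple random walks, independent of $\omega$. Using Fubini together with the independence of the time slices $(\omega(n,\cdot))_{n\ge 1}$, each of $\IE[W_N^2]$, $\IE[Y_N^2]$ and $\IE[Y_N W_N]$ factorizes over $n$ into a single-slice expectation, conditionally on $(S^1,S^2)$. For jointly Gaussian standard variables $U=\omega(n,S^1_n)$ and $V=\omega(n,S^2_n)$ with covariance $c=h(S^1_n-S^2_n)$, the elementary identities
\[
\IE\big[e^{\beta U-\beta^2/2}\,e^{\beta V-\beta^2/2}\big]=e^{\beta^2 c},\qquad
\IE\big[(1+\beta U)(1+\beta V)\big]=\IE\big[e^{\beta U-\beta^2/2}(1+\beta V)\big]=1+\beta^2 c
\]
hold, the last one by Gaussian integration by parts ($\IE[V\,e^{\beta U}]=c\,\beta\,\IE[e^{\beta U}]$). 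Since $(S^1_n-S^2_n)_{n\ge 0}$ has the same law as $(S_{2n})_{n\ge 0}$ --- exactly as already used to obtain \eqref{eq:second_moment} --- this gives
\[
\IE[W_N^2]=E\Big[e^{\beta_N^2\sum_{n=1}^N h(S_{2n})}\Big],\qquad
\IE[Y_N^2]=\IE[Y_N W_N]=E\Big[\prod_{n=1}^N\{1+\beta_N^2 h(S_{2n})\}\Big].
\]

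Next I would identify the common limit. By \eqref{eq:secondMomentFlat} in Theorem \ref{thm:main}, $\IE[W_N^2]\to\tilde J_\alpha(\hat\beta)^{-1}$. For the product form, $\prod_{n=1}^N\{1+\beta_N^2 h(S_{2n})\}\le e^{\beta_N^2\sum_{n=1}^N h(S_{2n})}$ yields $\limsup_N\IE[Y_N^2]\le\tilde J_\alpha(\hat\beta)^{-1}$, while the matching lower bound $\liminf_N\IE[Y_N^2]\ge\tilde J_\alpha(\hat\beta)^{-1}$ follows from Lemma \ref{prop:lowerBound} with $\sigma_N^2=\beta_N^2$, $\zeta=0$ and $\xi=1$ (equivalently, from \eqref{eq:linearTime} with $\zeta=0,\,\xi=1$, the absent $n=1$ factor being harmless since $\beta_N\to 0$ and $h$ is bounded). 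Hence all three second moments converge to $\tilde J_\alpha(\hat\beta)^{-1}$, so $\IE[(Y_N-W_N)^2]\to 0$.

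I do not expect any genuine obstacle: this is the classical ``linear approximation'' of the polymer partition function, and the only points needing care are routine --- the Fubini interchange between the disorder and the walk averages (handled by Cauchy--Schwarz and second-moment finiteness) and the Gaussian integration-by-parts identity behind the cross term $\IE[Y_N W_N]$. The substantive input is the second-moment convergence of Theorem \ref{thm:main}, proved independently in Section \ref{sec:2ndmoment}; granting it, the proof reduces to the three one-line Gaussian computations above.
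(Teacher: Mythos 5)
Your proof is correct and follows essentially the same route as the paper: expand $\IE[(Y_N-W_N)^2]$ into the three second moments, use Gaussian integration by parts for the cross term to identify $\IE[Y_N W_N]=\IE[Y_N^2]=E[\prod_n\{1+\beta_N^2 h(S_{2n})\}]$, and conclude via Theorem \ref{thm:main} and Lemma \ref{prop:lowerBound}. The paper's version is slightly more compact --- it only writes out $\IE[W_N Y_N]$ and directly expresses the difference as $E[e^{\beta_N^2\sum h(S_{2n})}]-E[\prod\{1+\beta_N^2 h(S_{2n})\}]$ --- but the content is the same.
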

\begin{proof}
For $\omega,\omega'$ two centered, unit variance, jointly Gaussian random variables of covariance $h$, we have $\IE[e^{\beta \omega-\frac{\beta^2}{2}}\omega'] = e^{-\frac{\beta^2}{2}}\frac{\mathrm d}{\mathrm d\beta'} \IE[e^{\beta \omega+\beta'\omega'}]_{|\beta'=0} = \beta h$. Hence,
\[
\IE[W_N Y_N] = E_0^{\otimes 2}\left[\prod_{n=1}^N\left\{1+ \beta_N \mathbb{E}[\eta(n,S_n^1)\omega(n,S_n^2)]\right\}\right]=E\left[\prod_{n=1}^N\left\{1+ \beta_N^2 h(S_{2n})]\right\}\right],
\]
and
\[
\IE\left[(W_N-Y_N)^2\right] = E\left[e^{\beta_N^2 \sum_{n=1}^N h(S_{2n})} \right] - E\left[\prod_{n=1}^N\left\{1+\beta_N^2 h(S_{2n})\right\}\right].
\]
By
Theorem \ref{thm:main} and Lemma \ref{prop:lowerBound}, the right hand side (which is positive) goes to zero.
\end{proof}
\bibliographystyle{plain}
{\footnotesize \bibliography{polymeres-bib}
}

\end{document}